\numberwithin{equation}{section}
\newtheorem{theorem}{Theorem}
\newtheorem{proposition}{Proposition}
\newtheorem{remark}{Remark}
\newtheorem{lemma}{Lemma}
\newtheorem{corollary}{Corollary}
\newtheorem{definition}{Definition}
\newcommand{\s}{\sigma}
\newcommand{\p}{\psi }
\newcommand{\pps}{\phi_{\p }}
\newcommand{\op}{\overline{\p }}
\newcommand{\lp }{\left(}
\newcommand{\rp  }{\right)}
\newcommand{\lc }{\left[ }
\newcommand{\rc }{\right]}
\newcommand{\D}{\mathcal{D} }
\newcommand{\R}{\mathbb{R}}
\newcommand{\T}{\mathbb{T}}
\newcommand{\e}{{\rm e}}
\newcommand{\pr}{\partial}
\newcommand{\prx}{\partial_{x_1}}
\newcommand{\tp}{\tilde{\p}}
\newcommand{\MM}{\mathfrak{M}}
\begin{document}

\title{Local and global solution for a nonlocal Fokker-Planck equation related to the adaptive biaising force processes}

\author{Houssam Alrachid} \ead{houssam.alrachid@enpc.fr}
\address{\'Ecole des Ponts ParisTech, Université Paris Est, 6-8 Avenue Blaise Pascal, Cité Descartes \\
Marne-la-Vallée,  F-77455 ,
France\\}
\address{Université Libanaise, Ecole Doctorale des Sciences et de Technologie\\
 Beyrouth, Lebanon}
 
\author{Tony Lelièvre}
\ead{lelievre@cermics.enpc.fr}
\address{\'Ecole des Ponts ParisTech, Université Paris Est, 6-8 Avenue Blaise Pascal, Cité Descartes \\
Marne-la-Vallée,  F-77455 ,
France\\}

\author{Raafat Talhouk} \ead{rtalhouk@ul.edu.lb}
\address{Université Libanaise, Faculté des Sciences et Ecole Doctorale des Sciences et de Technologie\\
 Beyrouth, Liban}

\begin{abstract}

We prove global existence, uniqueness and regularity of the mild, $L^p$ and classical solution of a non-linear {\em Fokker-Planck} equation arising in an adaptive importance sampling method for molecular dynamics calculations. The non-linear term is related to a conditional expectation, and is thus non-local. The proof uses tools from the theory of semigroups of linear operators for the local existence result, and an a priori estimate based on a supersolution for the global existence result. 

\end{abstract}

\begin{keyword}
Fokker-Planck; Nonlocal nonlinearity; Adaptive biaising force.
\end{keyword}

\maketitle

  \section{Introduction}\label{intro}

We consider the following {\em Fokker-Planck} equation 
  \begin{equation}\label{fpp}
    \left \lbrace
  \begin{aligned}
\pr_t\p &={\rm div}\big( \nabla V\p + \beta^{-1}\nabla\p \big)-\prx (\pps \p)&\mbox{in }(0,\infty )\times  \T^n, \\
\p(.,0)&=\p_0&\mbox{in } \T^n,
  \end{aligned}
  \right.
\end{equation}
with periodic boundary conditions on the unit torus $\T^n$ of dimension $n\geq 2$, where $\T=\R /\mathbb{Z}$ denotes the one-dimensional unit torus. We assume $\p_0\in W^{\s,p}(\T^n)$, $p>n$, with $\p_0\geq 0$ and $\displaystyle \int_{\T^n} \p_0=1$, $0<\s<2$ and $p>n$ to be fixed later on. The function $V: \T^n \rightarrow \R $ denotes the potential energy assumed to be a $C^2$ function and $\beta$ is a positive constant proportional to the inverse of the temperature $T$. The function $\p\mapsto \phi_\p$ is defined from $W^{1,p}(\T^n)$ into $W^{1,p}(\T^n)$ as follows 
  \begin{equation}\label{fi}
     \pps (t,x_1)=\frac{\displaystyle \int_{\T^{n-1}} \prx  V(x)\p (t,x) dx_2...dx_n}{\op (t,x_1)},
 \end{equation}
 where
 \begin{equation}
\op (t,x_1)= \displaystyle \int_{\T^{n-1}}\p (t,x) dx_2...dx_n.
 \end{equation}
Notice that $\pps $ is well defined if $\op (t,x_1)\neq 0$, $\forall x_1 \in \T$.
 %$\op $ is supposed positive so that we can prove the Lipschitz continuity property of the nonlinear term (called $ F  $ in the following) in Lemma \ref{lips}. This condition will be automatically satisfied if $\op _0>0$ (refer to Remark~\ref{posdi} for further details). 
 Therefore, we will work on the following open subset of $W^{\s,p}(\T^n)$:
\begin{equation}\label{D}
\D ^{\s,p}(\T^n):=\{\p \in W^{\s,p}(\T^n)\,|\,\op >0\}.
\end{equation}

The partial differential equation \eqref{fpp} is a parabolic equation with a nonlocal nonlinearity. A solution of the Fokker-Planck equation is a probability density function. The parabolic system \eqref{fpp} can be rewritten as
\begin{equation}\label{fp2}
\left\{
\begin{aligned}
\dot{\p } -\beta ^{-1}\Delta\p & =F  (\p) \mbox{ in}\,  (0,\infty ),\\
\p(0) &=\p_0,
\end{aligned}
\right.
\end{equation}
where $\dot{\p }=\frac{d\p}{dt}$ denotes the time derivative and 
$$ F  (\p):=\nabla V . \nabla \p + \Delta V \p -\prx (\pps \p).$$

Such Fokker-Planck problems (i.e \eqref{loc}) arise in adaptive methods for free energy computation techniques. Many molecular dynamics computations aim at computing free energy, which is a coarse-grained description of a high-dimensional complex physical system (see \citep{chipot2007free,tony:10}). More precisely, \eqref{loc} rules the evolution of the density (i.e. $\p(t)$) of a stochastic process $X(t)$ that is following an adaptively biased overdamped Langevin dynamics called {\em ABF} (or Adaptive biasing force method). The nonlinear and nonlocal term $\pps $, defined in \eqref{fi}, is used during the simulation in order to remove the metastable features of the original overdamped Langevin dynamics (see \citep{al:15,lel:07} for more details).
\medskip

Up to our knowledge, this is the first time that parabolic problems with nonlinearities involving the nonlocal term \eqref{fi} are studied. Different types of nonlocal nonlinearities have been studied in \citep{qui:07} for instance. A proof of existence of a solution to \eqref{fpp} is also obtained in \citep{jourdain2010existence} using probabilistic arguments. Here, we use analytical techniques that we expect to be more robust to extend the result to more general settings. 

\medskip

Before we present our main results, we define the mild, the $L^p$ and the maximal solutions of the parabolic problem \eqref{fpp}.
\begin{definition}(Mild, $L^p$ and maximal solution)\label{def}\\
Suppose that $0<\s\leq 2$ and $p>n$. Let $\p:[0,T)\rightarrow L^p(\T^n)$, where $0<T\leq\infty$ and $\p(0)=\p_0\in \D^{\s,p}(\T^n)$.  
\begin{itemize}
\item[$(i)$]  $\p$ is said to be a \textit{mild solution } of \eqref{fp2}, if $\p\in C([0,T),\D^{\s,p}(\T^n))$ satisfies the following integral-evolution equation:
\begin{equation}\label{iee}
\p (t)=\mathrm{e}^{\beta^{-1}t\Delta }\p_0+\displaystyle \int_{0}^{t}\mathrm{e}^{\beta^{-1}(t-s)\Delta } F  (\p (s))ds,\quad t\in [0,T);
\end{equation} 
\item[$(ii)$] $\p$ is said to be a \textit{$L^p-$solution} of \eqref{fpp} on $[0,T)$, if $\p\in C([0,T),L^p(\T^n))\cap C^1((0,T),L^p(\T^n))$, $\p(t)\in W^{2,p}(\T^n)\cap \D^{\s,p}(\T^n)$, for $t\in (0,T)$ and $\dot{\p }(t) -\beta^{-1}\Delta \p(t) = F  (\p(t))$ in $L^p(\T^n)$, for $t\in (0,T) $ and $\p(0)=\p_0$.
\item[$(iii)$] $\p$ is a maximal mild (resp. $L^p-$) solution if there does not exist a mild (resp. $L^p-$) solution of \eqref{fpp} which is a proper extension of $\p$. In this case, its interval of definition in time $(0,T_{\text{max}}):=(0,T)$ is said to be a \textit{maximal interval}.
\end{itemize}
\end{definition}
As will become clear below, all the definitions make sense since $F$ is well defined from $\D^{\s,p}$ into $L^p(\T^n)$, thanks to the assumption on $\s$ and $p$.
\medskip

In this paper, we will use the following hypothesis:
\begin{equation}\label{h}
[\mathcal{H}_1]\quad\p_0\in\D^{\s,p}(\T^n), \p_0\geq 0  \text{ and } \displaystyle\int_{\T^n}\p_0=1 .
\end{equation}
\begin{equation}\label{h2}
[\mathcal{H}_2]\!\quad n\geq 2,\,p>n \text{ and }\s\in(1+n/p,2).
\end{equation}

Our first main result concerning local-in-time existence and regularity is the following theorem.

\begin{theorem}\label{loc}
Assume $[\mathcal{H}_1]$ and $[\mathcal{H}_2]$. The initial boundary value problem \eqref{fpp} has a unique maximal $L^p$-solution $\p(t)$ with maximal interval of existence $(0,T_{\text{max}})$, where $T_{\text{max}}:=T_{\text{max}}(\p_0)>0$. 

Moreover,
\begin{itemize}
\item[(i)]$\forall \varepsilon \in [0,1-\s/2) $, $\forall \tau \in [0,\s)$, $\p \in  C((0,T_{\text{max}}),W^{2,p}(\T^n)) \cap C^\mu_{\text{loc}}([0,T_{\text{max}}),$ $W^{\tau,p}(\T^n))$, with
 $\mu:=\min\lp 1-\frac{\s}{2}-\varepsilon,\frac{\s-\tau}{2} \rp $;
\item[(ii)] $\displaystyle \frac{d\p}{dt}\in C^{\nu^-}_{\text{loc}}((0,T_{\text{max}}),L^p(\T^n))$, where $\nu:=\min ( 1-\frac{\s}{2},\frac{\s}{2}-\frac{1}{2}-\frac{n}{2p})$. By $C^{\nu^-}$, we mean $C^{\nu-\varepsilon}$, for any $\varepsilon\in (0,\nu)$; 
\item[(iii)] For $\p_0\in \D ^{\s_{\text{opt}},p}(\T^n)$, where $\s_{\text{opt}}=\frac{1}{5}\lp8+\frac{2n}{p}\rp$, then $\forall \rho\in (0,\frac{1}{5}(1-\frac{n}{p})) $, $\p \in C^{\rho}([0,T_{\text{max}}),C^{1+\rho}(\T^n))$. In the case when $\p_0\in \D^{\frac{8}{5},\infty}(\T^n)$, then $\forall \rho\in (0,\frac{1}{5}) $, $\p \in C^{\rho}([0,T_{\text{max}}),C^{1+\rho}(\T^n))$;
\item[(iv)] $\p$ is a classical solution, which means that $\p $ belongs to $C^{1}( (0,T_{\text{max}}),C^2(\T^n))$.
\end{itemize}
\end{theorem}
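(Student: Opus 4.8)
The plan is to recast \eqref{fp2} as an abstract semilinear parabolic Cauchy problem and apply the classical local theory for analytic semigroups, followed by a parabolic bootstrap. Fix $\omega>0$ large enough that $A:=-\beta^{-1}\Delta+\omega$ is a positive sectorial operator on $X:=L^p(\T^n)$ — the periodic Laplacian being sectorial on $L^p$ is classical — and set $\tilde F(\p):=F(\p)+\omega\p$, so that \eqref{fp2} reads $\dot{\p}+A\p=\tilde F(\p)$, $\p(0)=\p_0$. Denoting by $X^\alpha:=D(A^\alpha)$ the fractional power spaces, one has the continuous embeddings $X^\alpha\hookrightarrow W^{2\alpha,p}(\T^n)\hookrightarrow C^1(\T^n)$ whenever $2\alpha>1+n/p$; by $[\mathcal H_2]$ this makes $\D^{\s,p}(\T^n)$ an open subset of $X^{\s/2}$, and the mild solution of Definition~\ref{def} is precisely the solution of the integral equation \eqref{iee} with semigroup $\e^{-tA}$ in this setting.

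The central step is to show that $\tilde F$ is well defined from $\D^{\s,p}(\T^n)$ into $L^p(\T^n)$ and Lipschitz continuous on subsets of $\D^{\s,p}(\T^n)$ that are bounded in $W^{\s,p}$ and on which $\inf_{x_1}\op$ is bounded below by a fixed positive constant (such subsets exhaust $\D^{\s,p}(\T^n)$ since it is open). The linear contributions $\nabla V\cdot\nabla\p$ and $(\Delta V+\omega)\p$ map $W^{1,p}(\T^n)$ continuously into $L^p(\T^n)$ because $V\in C^2$, so the whole difficulty is the nonlocal term $\prx(\pps\p)$. Using $W^{\s,p}\hookrightarrow C^1$, for $\p\in\D^{\s,p}$ both the marginal $\op$ and the numerator $x_1\mapsto\int_{\T^{n-1}}\prx V\,\p\,dx_2\cdots dx_n$ are $C^1$ functions of $x_1$ — one differentiates under the integral sign, using $V\in C^2$ — and $\op$ is bounded below, so $\pps\in C^1(\T)\hookrightarrow W^{1,\infty}(\T)$ with bounds uniform over such a set; hence $\prx(\pps\p)=(\prx\pps)\p+\pps\,\prx\p\in L^p(\T^n)$. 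The Lipschitz estimate would follow by writing $\phi_{\p_1}\p_1-\phi_{\p_2}\p_2=\phi_{\p_1}(\p_1-\p_2)+(\phi_{\p_1}-\phi_{\p_2})\p_2$, bringing $\phi_{\p_1}-\phi_{\p_2}$ to the common denominator $\overline{\p_1}\,\overline{\p_2}$, and exploiting the linear dependence of the numerator and of the marginal on $\p$ together with the lower bounds on the marginals. I expect this step — controlling $\pps$ and $\prx\pps$ in $L^\infty$ through the quotient structure, uniformly on bounded sets — to be the main obstacle.

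With the local Lipschitz property in hand, existence and uniqueness of a maximal mild solution $\p\in C([0,T_{\text{max}}),\D^{\s,p}(\T^n))$ with $T_{\text{max}}=T_{\text{max}}(\p_0)>0$, together with the blow-up alternative, follows from the classical local well-posedness theory for semilinear parabolic equations with a sectorial generator and a locally Lipschitz nonlinearity defined on an open subset of the intermediate space $X^{\s/2}$. The regularity claims (i)--(ii) are then obtained by the standard parabolic bootstrap: in \eqref{iee} one uses the smoothing estimate $\|\e^{-tA}\|_{\mathcal L(X,X^\alpha)}\le C t^{-\alpha}$ for $\alpha<1$ and the local boundedness of $s\mapsto F(\p(s))$ in $L^p$ to obtain $\p(t)\in W^{2,p}(\T^n)$ for $t>0$ — so that $\p$ is in fact an $L^p$-solution in the sense of Definition~\ref{def} — while the Hölder-in-time statements in (i) come from the Hölder continuity of $t\mapsto\e^{-tA}\p_0$ in $X^{\tau/2}$ (exponent $(\s-\tau)/2$ for data in $X^{\s/2}$) and of the Duhamel term (exponent $1-\s/2-\varepsilon$). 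Differentiating the equation and invoking the Hölder regularity of $t\mapsto A\p(t)$ and $t\mapsto F(\p(t))$ gives (ii), the exponent $\s/2-1/2-n/(2p)$ reflecting the embedding $W^{\s,p}\hookrightarrow C^{1,\s-1-n/p}$ combined with the loss of one derivative in $F$.

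For (iii) one reruns the same scheme with $\p_0\in\D^{\s_{\text{opt}},p}(\T^n)$: demanding simultaneously that $\p(t)\in X^\alpha\hookrightarrow C^{1+\rho}(\T^n)$, i.e. $2\alpha-n/p>1+\rho$, and that the associated time-Hölder exponent $\s_{\text{opt}}/2-\alpha$ be at least $\rho$ up to $t=0$, forces $3\rho<\s_{\text{opt}}-1-n/p=\tfrac35(1-n/p)$, that is $\rho<\tfrac15(1-n/p)$ — and $\rho<\tfrac15$ in the limiting case $p=\infty$, $\p_0\in\D^{8/5,\infty}(\T^n)$. Finally (iv) is reached by one further bootstrap: reading $\beta^{-1}\Delta\p(t)=\dot{\p}(t)-F(\p(t))$ as an elliptic equation with Hölder-continuous right-hand side and applying Schauder estimates yields $\p(t)\in C^2(\T^n)$; since $V$ is only $C^2$, the term $\Delta V\,\p$ and the $\prx^2 V$ contribution arising when $\prx(\pps\p)$ is expanded keep $F(\p)$ from being more than Hölder continuous in space, so $C^2(\T^n)$ is the natural ceiling, and one concludes $\p\in C^1((0,T_{\text{max}}),C^2(\T^n))$.
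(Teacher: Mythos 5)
Your proposal is correct and follows essentially the same route as the paper: the local Lipschitz continuity of the nonlocal term (controlled via the quotient structure and the lower bound on the marginal $\op$), the Banach fixed point in the Duhamel formulation with the analytic-semigroup smoothing estimates, the Hölder bootstrap for (i)--(ii), the same optimization $3\rho<\s_{\text{opt}}-1-n/p$ for (iii), and a Sobolev bootstrap plus Schauder theory for (iv). The only cosmetic difference is that you make the generator positive by shifting with $\omega$, whereas the paper works on the mean-zero subspace $L^p_0(\T^n)$ via $\p\mapsto\p-1$.
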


The proofs of local existence are inspired from~\citep{am:84} and~\citep{paz:83}. The existence of the unique local-in-time solution relies on the fact that $F$ is locally Lipschitz continuous from $\D^{\s,p}(\T^n)$ into $L^p(\T^n)$ (see Lemma~\ref{lips}). Another fondamental ingredient is the following proposition, which will be used intensively throughout this paper.

  \begin{proposition}\label{diff}
Assume that $\p$ is a $L^p$-solution of \eqref{fpp}, then $\op(t,x_1)=\displaystyle\int_{\T^{n-1}}\p (t,x) dx_2...dx_n$ is the unique solution in the distribution sense of the following diffusion equation:
\begin{equation}\label{diffeq}
\left\{
\begin{aligned}
\pr_t\op &=\beta^{-1}\pr_{x_1x_1}^2\op \text{ in } [0,\infty)\times\T ,\\
\op (0,.)&=\op _0 \mbox{ on } \T.\\
\end{aligned}
\right.
\end{equation}
\end{proposition}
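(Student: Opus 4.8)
The plan is to integrate the PDE \eqref{fpp} over the last $n-1$ variables $x_2,\dots,x_n$ and show that all terms except the heat operator in the $x_1$-direction vanish, using periodicity and the structure of the nonlinearity. Throughout I will test against a smooth function of $x_1$ only, i.e. work directly in the distributional formulation in order to avoid any regularity issue coming from the integration.

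First I would take $\varphi \in \D(\T)$ (equivalently $C^\infty(\T)$ since $\T$ is compact) and pair the equation $\pr_t\p = {\rm div}(\nabla V\p + \beta^{-1}\nabla\p) - \prx(\pps\p)$ against $\varphi(x_1)$ integrated over $\T^n$. The left-hand side gives $\frac{d}{dt}\int_{\T^n}\p\,\varphi = \frac{d}{dt}\int_\T \op\,\varphi$. For the divergence term I would split $\nabla = (\prx, \nabla_\perp)$ where $\nabla_\perp = (\pr_{x_2},\dots,\pr_{x_n})$. The components ${\rm div}_\perp(\cdots)$ integrate to zero over $\T^{n-1}$ by periodicity, since $\varphi$ does not depend on $x_2,\dots,x_n$; what remains is $\int_{\T^n}\prx(\prx V\,\p + \beta^{-1}\prx\p)\,\varphi + \text{(the }\perp\text{ part)}$, and after integrating by parts in $x_1$ this equals $-\int_\T\big(\int_{\T^{n-1}}\prx V\,\p\,dx_2\cdots dx_n\big)\varphi' + \beta^{-1}\int_\T\op\,\varphi''$.

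Now the key cancellation: the nonlocal term contributes $-\int_{\T^n}\prx(\pps\p)\,\varphi = \int_{\T^n}\pps\,\p\,\varphi' = \int_\T \pps(t,x_1)\,\varphi'(x_1)\Big(\int_{\T^{n-1}}\p(t,x)\,dx_2\cdots dx_n\Big)dx_1 = \int_\T \pps\,\op\,\varphi'$, and by the very definition \eqref{fi} of $\pps$ we have $\pps\,\op = \int_{\T^{n-1}}\prx V\,\p\,dx_2\cdots dx_n$. Hence this term exactly cancels the potential term $-\int_\T\big(\int_{\T^{n-1}}\prx V\,\p\big)\varphi'$ computed above, leaving $\frac{d}{dt}\int_\T\op\,\varphi = \beta^{-1}\int_\T\op\,\varphi''$ for all test functions $\varphi$, which is precisely the distributional form of $\pr_t\op = \beta^{-1}\pr^2_{x_1x_1}\op$. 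The initial condition $\op(0,\cdot)=\op_0$ follows from $\p\in C([0,T),L^p(\T^n))$ and continuity of the map $\p\mapsto\op$ from $L^p(\T^n)$ to $L^p(\T)$ (Fubini/Jensen). Uniqueness in the distributional sense for the one-dimensional heat equation on $\T$ with $L^p$ initial data is classical: the difference of two solutions has zero initial data and, e.g. after Fourier expansion in $x_1$, each Fourier mode solves a scalar linear ODE with zero initial value, hence vanishes.

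The main thing to be careful about — the only real obstacle — is the justification of these manipulations at the low regularity of an $L^p$-solution: $\p(t)\in W^{2,p}(\T^n)$ for $t>0$ so the spatial integrations by parts are legitimate there, but one must check that $\prx V\,\p\in L^1$ uniformly enough, that $\pps\,\op$ is genuinely integrable (which is immediate since $\pps\,\op = \int_{\T^{n-1}}\prx V\,\p$ and $\prx V$ is bounded, $V$ being $C^2$), and that the time derivative can be passed through the $x$-integral — this uses $\p\in C^1((0,T),L^p(\T^n))$ from the definition of $L^p$-solution together with the fact that $\varphi$ is bounded. With those points dispatched, the identity holds on $(0,T)$ and extends to $t=0$ by continuity, giving the claim.
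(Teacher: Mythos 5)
Your proposal is correct and follows essentially the same route as the paper: test against a function of $x_1$ only, integrate by parts so that only the $x_1$-component of the divergence survives (the transverse components vanishing by periodicity), and use the identity $\pps\,\op=\int_{\T^{n-1}}\prx V\,\p\,dx_2\cdots dx_n$ to cancel the potential term against the nonlocal term, leaving the weak form of the heat equation in $x_1$. The only cosmetic difference is that the paper tests against $g\in H^1(\T)$ rather than $\varphi\in C^\infty(\T)$, and it does not spell out the uniqueness step, which you correctly note is classical.
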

\begin{remark}\label{posdi}
 Since, by Proposition \ref{diff}, $\op$ satisfies a simple diffusion equation, then the property $ \op > 0$ is propagated in time. Moreover, since $ \op_0\geq 0$ and $\int_{\T}\op_0=1$, then up to considering the problem for $t\geq t_0>0$, one can assume that $\op_0>0$. This will be assumed in the following (see Definition~\ref{def}) . In addition, it follows from known results on parabolic linear equation (see for example \citep{ev:98} or \citep{lib:96}) that $\op\in C^\infty((0,\infty),C^\infty(\T^n)) $.
 \end{remark}

We will check that $\p$ is a probability density function. In particular, the positivity of $\p$ can be verified upon some positivity conditions on $\p_0$ as following. 
\begin{proposition}\label{pos1}
Assume $[\mathcal{H}_1]$ and $[\mathcal{H}_2]$. Then the $L^p-$solution $\p(t)$ of \eqref{fpp} satisfies $\p(t)> 0$ for all $t\in (0,T_{\text{max}}).$
\end{proposition}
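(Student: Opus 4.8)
The plan is to prove positivity of $\p(t)$ by a maximum-principle argument applied to the linear parabolic equation that $\p$ solves once the nonlocal term $\pps$ is regarded as a given (smooth enough) coefficient. Concretely, I would first freeze the nonlinearity: let $\p$ be the $L^p$-solution on $(0,T_{\text{max}})$ provided by Theorem~\ref{loc}, and observe that, thanks to Proposition~\ref{diff} and Remark~\ref{posdi}, the marginal $\op$ is smooth and strictly positive on $(0,\infty)\times\T$. Hence by~\eqref{fi} the coefficient $\pps$ is a well-defined function; from the regularity in Theorem~\ref{loc}(i) one gets $\pps\in C((0,T_{\text{max}}),W^{1,p}(\T^n))$ with $p>n$, so in particular $\pps$ and $\prx\pps$ are bounded on $[\epsilon,T]\times\T^n$ for any $0<\epsilon<T<T_{\text{max}}$. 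Then $\p$ is a (strong, $L^p$) solution of the \emph{linear} parabolic equation
\begin{equation}\label{linpos}
\pr_t\p-\beta^{-1}\Delta\p+\prx\big(\pps\,\p\big)-\nabla V\cdot\nabla\p-\Delta V\,\p=0,
\end{equation}
which, expanding the divergence, has the form $\pr_t\p=\beta^{-1}\Delta\p+b(t,x)\cdot\nabla\p+c(t,x)\p$ with bounded coefficients $b=\nabla V-\pps e_1$ and $c=\Delta V-\prx\pps$ on each strip $[\epsilon,T]\times\T^n$.

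The second step is to apply the parabolic maximum principle / comparison principle to~\eqref{linpos}. For an equation of the form $\pr_t\p=\beta^{-1}\Delta\p+b\cdot\nabla\p+c\p$ with bounded measurable coefficients on the compact manifold $\T^n$ (no boundary), a nonnegative initial datum yields a nonnegative solution, and in fact strict positivity at positive times by the strong maximum principle (Hopf), provided $\p\not\equiv0$. Since $\int_{\T^n}\p_0=1$ by $[\mathcal H_1]$ and this integral is conserved (integrate~\eqref{fpp} over $\T^n$, the divergence and $\prx$-terms vanish by periodicity), $\p$ is not identically zero. To set this up cleanly I would first establish $\p\geq0$: multiply~\eqref{linpos} by $\p_-=\max(-\p,0)$ (the negative part, which lies in the right Sobolev space since $\p(t)\in W^{2,p}$), integrate over $\T^n$, use $\int \Delta\p\cdot\p_- = -\int|\nabla\p_-|^2$ and $\int b\cdot\nabla\p\,\p_- = -\tfrac12\int b\cdot\nabla(\p_-^2) = \tfrac12\int(\operatorname{div}b)\p_-^2$, to obtain a Gronwall inequality $\frac{d}{dt}\|\p_-(t)\|_{L^2}^2\le C\|\p_-(t)\|_{L^2}^2$ on $[\epsilon,T]$; letting $\epsilon\to0$ and using $\p_-(0)=0$ gives $\p_-\equiv0$, i.e. $\p\ge0$. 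Then strict positivity on $(0,T_{\text{max}})$ follows from the strong maximum principle applied to the nonnegative supersolution $\p$ of $\pr_t u-\beta^{-1}\Delta u-b\cdot\nabla u-c^- u\ge 0$ (absorbing the sign of $c$), which cannot vanish at an interior point without vanishing identically.

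The main obstacle is the low regularity of the coefficient $\pps$ near $t=0$ and the need to justify that $\pps$ is genuinely an admissible (at least bounded, ideally continuous) coefficient away from $t=0$: one must carefully invoke Remark~\ref{posdi} to know $\op>0$ is bounded below on compact time intervals $[\epsilon,T]$, and combine the Sobolev embedding $W^{\s,p}\hookrightarrow C^1$ (valid under $[\mathcal H_2]$, since $\s>1+n/p$) with the smoothness of $\op$ to control $\pps$ and its $x_1$-derivative. A secondary subtlety is that the comparison argument is a priori only on $[\epsilon,T_{\text{max}})$, so one argues on $(0,T_{\text{max}})$ by taking $\epsilon\downarrow 0$; since conservation of mass keeps $\|\p(\epsilon)\|_{L^1}=1$ and $\p(\epsilon)\geq 0$ for every small $\epsilon$, one may even restart the problem from $t=\epsilon$ with a nonnegative, nonzero datum and conclude $\p>0$ on $(\epsilon,T_{\text{max}})$, hence on all of $(0,T_{\text{max}})$.
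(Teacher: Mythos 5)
Your proposal is correct, and its core --- multiplying the equation by the negative part $\p_-$, integrating over $\T^n$, absorbing the drift term, and closing a Gronwall inequality from $\p_-(0)=0$ --- is exactly the paper's argument. Two points of comparison are worth noting. First, the paper handles the term $\int_{\T^n}{\rm div}(\nabla V\p-\pps e_1\p)\p_-$ by a single integration by parts followed by Young's inequality, using only the bound $\|\pps\|_\infty\leq\|\prx V\|_\infty$ from Lemma~\ref{finft}; your variant, which rewrites $\int b\cdot\nabla\p\,\p_-$ as $\tfrac12\int({\rm div}\,b)\p_-^2$, additionally needs $\prx\pps$ bounded, which you correctly justify via $W^{\s,p}\hookrightarrow C^1$ and $\min\op\geq\alpha>0$, but which is avoidable. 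Second, and more substantively, your proof actually goes further than the paper's: the written proof in the paper stops at $\int_{\T^n}|\p_-|^2=0$, i.e.\ at $\p\geq 0$, whereas the proposition asserts strict positivity $\p(t)>0$. Your additional step --- freezing $\pps$ as a bounded coefficient, noting $\int_{\T^n}\p(t)=1$ so $\p(t)\not\equiv 0$, and invoking the strong maximum principle for the resulting linear parabolic equation on the closed manifold $\T^n$ (legitimate here since $\p$ is a classical solution for $t>0$ by Theorem~\ref{loc}(iv)) --- is precisely what is needed to close that gap, so your argument is the more complete of the two.
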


The local-in-time existence result is expected and rather standard since $F$ is locally Lipschitz continuous. The main difficulty of this work is then to obtain a-priori estimates to prove the global-in-time existence and uniqueness. This is done by exhibiting a supersolution of the partial differential equations satisfied by $\p\exp(\frac{\beta V}{2})$, which only depends on $t$ and $x_1$ (see Section~\ref{pub}).
\medskip

The second main result states the global existence of the solution to \eqref{fpp}. 

\begin{theorem}\label{glo}
Assume $[\mathcal{H}_1]$ and $[\mathcal{H}_2]$. Let $\p$ be the solution of \eqref{fpp} given by Theorem~\ref{loc}, with maximal interval of existence $(0,T_{\text{max}})$. Then
\begin{itemize}
\item[(i)] $T_{\text{max}}=+\infty$;
\item[(ii)] For every $\delta > 0$, $\displaystyle \sup_{t\geq\delta}\|\p(t)\|_{\rho,p}<\infty $ for every $\rho\in [\s, 2)$ and the orbit set $\gamma^{+}(\p_0):=\{\p(t,\cdot) ;0 \leq t< T_{\text{max}}\}$ is relatively compact in $C^1(\T^n)$.
\end{itemize}
\end{theorem}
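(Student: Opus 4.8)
\textbf{Proof plan for Theorem~\ref{glo}.}

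The plan is to rule out finite-time blow-up by showing that, on any bounded time interval contained in $(0,T_{\text{max}})$, the solution stays bounded in a norm strong enough to re-enter the local existence theorem. The natural quantity to control is $\tp := \p\,\e^{\beta V/2}$, which, as indicated in the introduction, satisfies a parabolic equation whose nonlinearity is milder. First I would compute that evolution equation for $\tp$ and observe that the zeroth-order and first-order terms can be bounded using the structure of $\pps$; here the crucial input is Proposition~\ref{diff} together with Remark~\ref{posdi}, which tell us that $\op$ solves the plain heat equation on $\T$, so $\op$ is smooth for $t>0$, and — most importantly — that $\inf_{x_1}\op(t,x_1)$ is bounded below on compact time intervals and $\|\op(t)\|_{W^{1,\infty}}$ is bounded above there. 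This makes $\pps$ a controlled (smooth in $t$, bounded) coefficient rather than a genuinely dangerous nonlocal nonlinearity once we are away from $t=0$.

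The heart of the argument is the supersolution construction announced in the introduction: one exhibits a function $w(t,x_1)$, depending only on $t$ and the first variable, that is a supersolution of the scalar parabolic problem governing a suitable comparison quantity built from $\tp$ (for instance $\sup_{x_2,\dots,x_n}\tp(t,\cdot)$ or an $L^\infty$-type envelope), with $w$ finite on every bounded time interval. Invoking the maximum principle / comparison principle for the (now linear, with bounded coefficients) parabolic operator then yields $\|\tp(t)\|_{L^\infty(\T^n)}\le w(t)<\infty$ on $[\delta,T_{\text{max}})$ for any $\delta>0$, hence $\|\p(t)\|_{L^\infty}$, and therefore $\|\p(t)\|_{L^p}$, stays bounded as $t\uparrow T_{\text{max}}$. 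I would then bootstrap: feeding the $L^\infty$ (hence $L^p$) bound back into the mild formulation \eqref{iee} and using the smoothing estimates $\|\e^{\beta^{-1}t\Delta}\|_{L^p\to W^{\s,p}}\lesssim t^{-\s/2}$ for the analytic heat semigroup, together with the local Lipschitz bound on $F$ from Lemma~\ref{lips} (whose constant now depends only on the already-controlled quantities), gives a uniform bound on $\|\p(t)\|_{W^{\s,p}}$ and a uniform lower bound on $\inf\op(t)$ near $T_{\text{max}}$. A standard continuation argument — if $T_{\text{max}}<\infty$ and $\p(t)$ stays in a fixed compact subset of $\D^{\s,p}$, the local existence theorem extends the solution past $T_{\text{max}}$, a contradiction — then proves $T_{\text{max}}=+\infty$, which is part~(i).

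For part~(ii), once $T_{\text{max}}=\infty$ the same estimates, now run on $[\delta,\infty)$, give $\sup_{t\ge\delta}\|\p(t)\|_{W^{\rho,p}}<\infty$ for $\rho\in[\s,2)$: one uses the variation-of-constants formula started from time $\delta/2$, the uniform-in-time bound on $\|F(\p(s))\|_{L^p}$ coming from the $L^\infty$ control, and the integrability of $(t-s)^{-\rho/2}$ near $s=t$ since $\rho<2$. Finally, relative compactness of the orbit $\gamma^+(\p_0)$ in $C^1(\T^n)$ follows from the compact Sobolev embedding $W^{\rho,p}(\T^n)\hookrightarrow\hookrightarrow C^1(\T^n)$, valid once $\rho-n/p>1$, which is possible with $\rho<2$ precisely under hypothesis $[\mathcal{H}_2]$; the behaviour near $t=0$ is harmless because $\p\in C([0,T_{\text{max}}),\D^{\s,p})$ already embeds $\{\p(t):0\le t\le\delta\}$ compactly via the continuity of $t\mapsto\p(t)$ on a compact interval, and the tail $\{\p(t):t\ge\delta\}$ is precompact by the uniform $W^{\rho,p}$ bound.

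\emph{Main obstacle.} The delicate point is the supersolution step: one must write the PDE for $\tp$ explicitly, check that the first-order transport term $-\prx(\pps\tp)$ (after the change of unknown) can be absorbed into a comparison argument with a coefficient-independent, purely $(t,x_1)$-dependent supersolution, and verify that this supersolution does not itself blow up in finite time — this is exactly where the special conditional-expectation structure of $\pps$ and the heat-equation behaviour of $\op$ must be used quantitatively, rather than just qualitatively. Controlling the behaviour near $t=0$ (where $\op$ is only known to be positive after an arbitrarily small time, per Remark~\ref{posdi}) requires the harmless trick of restarting the estimates at time $\delta>0$, which is why part~(ii) is stated with the $\sup_{t\ge\delta}$.
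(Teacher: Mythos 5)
Your plan follows essentially the same route as the paper's proof: a supersolution $\MM(t,x_1)$ depending only on $(t,x_1)$ for $\tp=\p\,\e^{\beta V/2}$ (Lemma~\ref{mp}), comparison to get $L^\infty$ control of $\p$, the polynomial bound $\|F(\p)\|_p\le C_F\|\p\|_{\s,p}$ fed into the mild formulation with the smoothing estimate \eqref{or1} and a Gronwall-type lemma to bound $\|\p\|_{\s,p}$ uniformly, a continuation argument (using $\op>0$ from the heat equation) for part~(i), and variation of constants restarted at time $\delta$ plus the compact Sobolev embeddings for part~(ii). The only point you compress is the one you yourself flag as the main obstacle: in the paper the supersolution first yields only $L^2\big((0,T_{\text{max}}),L^\infty\big)$ control (Corollary~\ref{bou2}), and intermediate $H^1$/$H^2$ energy estimates on $\p$ are needed before $\MM$ can be upgraded to $L^\infty\big((0,T_{\text{max}}),H^1(\T)\big)\hookrightarrow L^\infty\big((0,T_{\text{max}}),L^\infty(\T)\big)$, which is what finally gives the uniform-in-time $L^\infty$ bound.
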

\medskip

Let us make a few comments on the functional framework we use. In this paper, we work in $L^p(\T^n)$ with $p>n$ to ensure that $W^{\s,p}(\T^n)$ is an algebra for $\s\geq 1$. In addition, the parameter $\s$ is restricted to the interval $(1+\frac{n}{p},2)$ since we need on the one hand the Sobolev embedding $W^{\s,p}(\T^n)\hookrightarrow C^1(\T^n) $ (see \eqref{boun} below) and, on the other hand, $t\mapsto \displaystyle \|\e^{-tA_p}\|_{\mathcal{L}(L^p_0,W^{\s,p}_0)}\in L^1(0,+\infty) $, which requires $\s <2$ (see \eqref{or1} below).
\medskip

 The paper is organized as follows. In Section~\ref{not} we provide some notations and preliminaries. The Section~\ref{theo} contains the proof of Theorem~\ref{loc}, which states the local-in-time existence and uniqueness of solution to \eqref{fpp}. In Section~\ref{max}, we prove Proposition~\ref{diff} and we use a weak maximum principle to prove that $\p\geq 0$. In Section~\ref{priob}, $L^p$-bounds for the nonlinear functional $F$ and a-priori bounds of $\p$ are proved, which yield the global in time existence theorem (Theorem~\ref{glo}). 

\section{Notations and preliminaries}\label{not}
We denote by $L^p(\T^n)$ and $W^{\s,p}(\T^n)$ the usual Lebesgue and Sobolev spaces. The space $C^{\alpha + k}(\T^n)$, with $k\in \mathbb{N}$ and $\alpha\in (0,1)$, is the Banach space of all functions belonging to $C^{ k}(\T^n)$ whose $k^{th}$ order partial derivatives are $\alpha -$H\"{o}lder continuous on $\T^n$. We denote by $\|.\|_{p}$ and $\|.\|_{s,p}$ the usual norms on $L^p(\T^n)$ and $W^{s,p}(\T^n)$ respectively. The norm on $C^{\alpha + k}(\T^n)$ is defined by:
$$\|f\|_{C^{\alpha + k}}:=\displaystyle \max_{|i|\leq k}\sup_{x\in\T^n}|D^if(x)|+\displaystyle \max_{|i|=k}\sup_{x\neq y}\frac{|D^if(x)-D^if(y)|}{|x-y|^\alpha}. $$ 

\medskip

$C^{1^-}(\T^n)$ (resp. $C^{1^-}_{\text{loc}}(\T^n)$) is the space of globally (resp. locally) Lipschitz continuous functions on $\T^n$. We mean by $f \in C^{0,1-}((0,T)\times W^{\s,p},L^p)$, that the mapping $f(\cdot,\p):(0,T)\rightarrow L^p$ is continuous for each $\p\in W^{\s,p}$ and $f(t,.):W^{\s,p}\rightarrow L^p$ is uniformly Lipschitz continuous for each $t\in (0,T)$.

\medskip

$B_{\s,p}(0,R)$ denotes the ball of radius $R>0$, in the topology of the space $W^{\s,p}(\T^n)$. For $T>0$ and $\rho\in (0,1)$, the space $C^{\rho}((0,T),L^p(\T^n))$ (resp. $C_{\text{loc}}^{\rho}((0,T),L^p(\T^n))$) denotes the space of globally (resp. locally) $\rho-$H\"older continuous functions from $(0,T)$ to $L^p(\T^n)$. Recall that a H\"older continuous function on a compact is equivalently a locally H\"older continuous function on this compact.

\medskip

By $e_1$ we denote the unit vector $(1,0,\cdot\cdot\cdot,0)$ of $\R^n$ and $C$ denotes various positive constants which may vary from step to step.

\medskip

We shall use the following Sobolev embeddings, that can be found in \citep{bre,gil:77,ada:03}:
\begin{equation}
 W^{s,p}(\T^n) \hookrightarrow C^m(\T^n),\text{ if } 0\leq m<s-n/p , \label{boun} 
\end{equation}
%\begin{equation}
% W^{1,p}(\T^n) \hookrightarrow L^{q}(\T^n),\quad p\leq q\leq p^{*},\, p^{*}=\frac{np}{n-p}, \label{bre} 
%\end{equation}
\begin{equation}
 W^{k,q}(\T^n) \hookrightarrow W^{l,p}(\T^n),\text{ if }k>l,\text{ and }k-\frac{n}{q}>l-\frac{n}{p}.\label{sobo} 
\end{equation}

In the following, we will use the operator $A_p=-\beta^{-1}\Delta$. The domain of $ A_p $ is $\displaystyle D( A_p )=\displaystyle W^{2,p}(\T^n)\cap L^p_0(\T^n)$, where $L^p_0(\T^n):=\{\p\in L^p(\T^n)\,|\,\displaystyle\int_{\T^n}\p=0\} $, equipped with the $L^p$-norm. The operator $- A_p $ generates a strongly continuous analytic semigroup of contraction $\{e^{-t A_p };\, t\geq 0\}$ on $L^p_0(\T^n)$ (see Lemma~\ref{spa}).. The domain of $ A_p $ is $\displaystyle D( A_p )=\displaystyle W^{2,p}(\T^n)\cap L^p_0(\T^n)$. Then the mild solution we be written in terms of $A_p$ rather than $-\beta^{-1}\Delta$ such that a mild solution will actually be solution to (compare with \eqref{iee}):
\begin{equation}\label{mld2}
\p(t)-1=\mathrm{e}^{-t A_p }(\p _0-1)+\displaystyle \int_{0}^{t}\mathrm{e}^{-(t-s) A_p }F (\p(s))ds,\quad t\in [0,T).
\end{equation}
Indeed, we have that $\displaystyle\int_{\T^n}(\p-1)=0 $, $\displaystyle\int_{\T^n}(\p_0-1)=0 $ and $\displaystyle\int_{\T^n}F(\p)=0$ (since $F$ is periodic and in divergence form). In addition, since $\e^{t\beta^{-1}\Delta}1=1$, a solution $\p$ of \eqref{mld2} gives a solution $\p$ of \eqref{iee} satisfying $\displaystyle\int_{\T^n}\p=1$.

In the following, we will use the notation $A_p$ and $\e^{-tA_p}$, when the operator applies to a function in $\D(A_p)$ (as in \eqref{mld2}) and the notation $-\beta^{-1}\Delta$ and $\e^{t\beta^{-1}\Delta}$ otherwise.
\medskip

The following Lemmas will be used in the next sections. These are classical results that we recall in our specific context, for the sake of consistency (see \citep{paz:83} or \ref{prspa} and~\ref{prspa2} for proofs).

\begin{lemma}\label{spa}
Let $1<p<\infty$, then the operator $ A_p =-\beta^{-1}\Delta$, with domain $D(A_p)=W^{2,p}(\T^n)\bigcap L^p_0(\T^n) $, satisfies the following assertions:
\begin{enumerate}
\item The operator $- A_p $ generates a strongly continuous analytic semigroup of contraction $\{e^{-t A_p };\, t\geq 0\}$ on $L^p_0(\T^n)$. In particular $\|e^{-t A_p }\|_{\mathcal{L}(L^p_0,L^p_0)}\leq 1$.
%\item $\|(\lambda+ A_p )^{-1}\|_{\mathcal{L}(L^p_0,L^p_0)}\leq\frac{1}{1+\lambda}$, for all $\lambda\in\mathbb{R}^*_+$.\label{kk}
\item The spectrum of $ A_p $ is included in  $\R_+^*$.
\item  $\forall \alpha\geq 0,\exists C_{\alpha}>0,\exists \kappa>0$ such that
 \begin{equation}\label{reg1}
\forall t>0,\quad \left\| A_p ^{\alpha}\e^{-t A_p }\right\|_{\mathcal{L}(L^p_0,L^p_0)}\leq C_\alpha t^{-\alpha}\e^{-\kappa t}. 
 \end{equation}
 \item  $ \forall \alpha\in  [0,1],\exists C_{\alpha}>0, \forall \p_0\in D( A_p ^\alpha)$ 
 \begin{equation}\label{reg2}
\forall t>0,\quad\left\|(\e^{-t A_p }-I)\p_0\right\|_p\leq C_\alpha t^{\alpha}\| A_p ^\alpha \p_0\|_p. 
 \end{equation}
\item $\forall \s\in [0,2)$, $\exists\, C_{\s}>0$, $\exists \kappa>0$ such that
\begin{equation}\label{or1}
\forall t>0,\quad \|\mathrm{e}^{-tA_p }\|_{\mathcal{L}(L^p_0,W^{\sigma,p}_0)}\leq \widehat{\alpha}(t),
\end{equation}
where $\widehat{\alpha}(t)=C_\s t^{-\frac{\sigma}{2}}e^{-\kappa t}$. Note that $\widehat{\alpha}$ is in $ L^1((0,\infty ),\mathbb{R}_+)$ and is a decreasing function.\label{e1}
\item $\forall \s\in [0,2]$, $\forall \gamma\in [\s/2,1] $, $\exists\, C_{\s,\gamma}>0$, $\exists \kappa>0$ such that $\forall\, 0\leq s<r<t$
\begin{equation}\label{dex}
\|\e^{-(t-s) A_p }-\e^{-(r-s) A_p }\|_{\mathcal{L}(L^p_0,W^{\sigma,p}_0)}\leq C_{\s,\gamma}(t-r)^{\gamma-\s/2}(r-s)^{-\gamma}\e^{-\kappa (r-s)}.
\end{equation}

\end{enumerate}
\end{lemma}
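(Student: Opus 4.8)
The statement collects classical facts about the Laplacian on $\T^n$, so I would simply assemble standard semigroup theory with a few Fourier-analytic ingredients proper to the torus. For assertions~1--2: on $L^p(\T^n)$ the operator $\e^{t\beta^{-1}\Delta}$ is convolution with the periodized Gaussian heat kernel $G_t\ge 0$, which satisfies $\int_{\T^n}G_t=1$; Young's inequality then gives $\|\e^{t\beta^{-1}\Delta}f\|_p\le\|f\|_p$, while $\int_{\T^n}\e^{t\beta^{-1}\Delta}f=\int_{\T^n}f$ shows that $L^p_0(\T^n)$ is invariant and the restricted semigroup is a contraction. Analyticity of $\e^{t\beta^{-1}\Delta}$ on $L^p$ is classical (it follows from uniform multiplier bounds on a sector for the resolvents $(\lambda+A_p)^{-1}$, whose Fourier symbols are $(\lambda+\beta^{-1}|2\pi k|^2)^{-1}$, or from Gaussian heat-kernel estimates) and is inherited by the invariant subspace $L^p_0$. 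Finally, $A_p$ is diagonal in the Fourier basis with eigenvalues $\beta^{-1}|2\pi k|^2$, $k\in\mathbb{Z}^n$, so removing the zero mode yields $\sigma(A_p)=\{\beta^{-1}4\pi^2|k|^2 : k\in\mathbb{Z}^n\setminus\{0\}\}\subset\R_+^*$, bounded below by $\kappa_0:=4\pi^2\beta^{-1}>0$; this spectral gap is exactly what supplies the factors $\e^{-\kappa t}$, for any $\kappa\in(0,\kappa_0)$, in the remaining assertions.

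\emph{Assertions 3--4.} Since $-A_p$ generates a bounded analytic semigroup on $L^p_0$ which, by the spectral gap, obeys $\|\e^{-tA_p}\|_{\mathcal L(L^p_0)}\le C\e^{-\kappa t}$, its fractional powers $A_p^\alpha$ ($\alpha\ge 0$) are well defined and \eqref{reg1} follows from the general theory of analytic semigroups (see \citep{paz:83}): for small $t$ one combines the analyticity bound $\|A_p\e^{-tA_p}\|_{\mathcal L(L^p_0)}\le Ct^{-1}$ with the moment inequality for fractional powers, and the decay as $t\to\infty$ comes from the exponential bound just quoted. For \eqref{reg2}, with $\p_0\in D(A_p^\alpha)$ and $\alpha\in(0,1]$, write
\[
(\e^{-tA_p}-I)\p_0=-\int_0^t A_p\e^{-sA_p}\p_0\,ds=-\int_0^t A_p^{1-\alpha}\e^{-sA_p}\,A_p^\alpha\p_0\,ds,
\]
and bound $\|A_p^{1-\alpha}\e^{-sA_p}\|_{\mathcal L(L^p_0)}\le Cs^{-(1-\alpha)}$, which is integrable on $(0,t)$ because $1-\alpha<1$, so that $\int_0^t s^{-(1-\alpha)}ds=t^\alpha/\alpha$; the case $\alpha=0$ is the trivial estimate $\|\e^{-tA_p}-I\|\le 2$.

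\emph{Assertions 5--6.} The additional input is the identification $W^{\s,p}_0(\T^n)=D(A_p^{\s/2})$ with equivalent norms for $\s\in[0,2]$ — these being Bessel-potential spaces over the torus, which is again a Fourier-multiplier statement; on $L^p_0$ one also uses the spectral gap to replace $(I+A_p)^{\s/2}$ by $A_p^{\s/2}$. Then, by \eqref{reg1} with $\alpha=\s/2<1$,
\[
\|\e^{-tA_p}f\|_{\s,p}\le C\|A_p^{\s/2}\e^{-tA_p}f\|_p\le C_\s t^{-\s/2}\e^{-\kappa t}\|f\|_p=\widehat\alpha(t),
\]
and $\widehat\alpha$ is decreasing and lies in $L^1(0,\infty)$ precisely because $\s<2$, which is \eqref{or1}. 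For \eqref{dex}, fix $\gamma\in[\s/2,1]$, use the semigroup law to write $\e^{-(t-s)A_p}-\e^{-(r-s)A_p}=(\e^{-(t-r)A_p}-I)\e^{-(r-s)A_p}$, pass to the equivalent $A_p^{\s/2}$-norm, commute the (mutually commuting) operators, and apply \eqref{reg2} with exponent $\gamma-\s/2\in[0,1]$ and then \eqref{reg1} with $\alpha=\gamma$:
\[
\|\e^{-(t-s)A_p}-\e^{-(r-s)A_p}\|_{\mathcal L(L^p_0,W^{\s,p}_0)}\le C\big\|(\e^{-(t-r)A_p}-I)\,A_p^{\s/2}\e^{-(r-s)A_p}\big\|_{\mathcal L(L^p_0)}
\]
\[
\le C(t-r)^{\gamma-\s/2}\big\|A_p^{\gamma}\e^{-(r-s)A_p}\big\|_{\mathcal L(L^p_0)}\le C_{\s,\gamma}(t-r)^{\gamma-\s/2}(r-s)^{-\gamma}\e^{-\kappa(r-s)}.
\]

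\emph{Main obstacle.} Every step above is classical; the only ingredient that goes beyond abstract semigroup theory is the $L^p$-boundedness (here $p>n\ge 2$, so $p\ne 2$ and we are outside the Hilbert-space setting) of the Fourier multipliers underlying both the analyticity of $\e^{t\beta^{-1}\Delta}$ and the norm equivalence $\|\cdot\|_{\s,p}\simeq\|A_p^{\s/2}\cdot\|_p$ on $L^p_0$, which on $\T^n$ is handled by the Mikhlin--Hörmander (equivalently Marcinkiewicz) multiplier theorems.
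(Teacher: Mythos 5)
Your proof is correct, and it reaches the same endpoint as the paper by a partly different route. For items 5 and 6 your argument is essentially identical to the paper's: the same reduction $\|\cdot\|_{\s,p}\simeq\|A_p^{\s/2}\cdot\|_p$, the same factorization $\e^{-(t-s)A_p}-\e^{-(r-s)A_p}=(\e^{-(t-r)A_p}-I)\e^{-(r-s)A_p}$ followed by inserting $A_p^{\s/2-\gamma}\cdot A_p^{\gamma}$ and applying \eqref{reg2} and \eqref{reg1}. Where you diverge is in the foundations: the paper obtains items 1--2 by invoking the abstract result that a strongly elliptic operator generates an analytic contraction semigroup (an adaptation of Theorem~7.3.6 in \citep{paz:83} from Dirichlet to periodic boundary conditions, plus Hille--Yosida), and items 3--4 by citing Theorem~2.6.13 of \citep{paz:83}; you instead work concretely on the torus, getting contractivity from positivity and normalization of the periodized Gaussian kernel via Young's inequality, and the spectrum and the exponential factor $\e^{-\kappa t}$ from the explicit Fourier eigenvalues $\beta^{-1}4\pi^2|k|^2$ with the explicit gap $4\pi^2\beta^{-1}$, then reproving \eqref{reg1}--\eqref{reg2} from the integral representation $(\e^{-tA_p}-I)\p_0=-\int_0^t A_p^{1-\alpha}\e^{-sA_p}A_p^{\alpha}\p_0\,ds$. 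Your route buys explicit constants and avoids the (slightly awkward) transfer of a Dirichlet-boundary theorem to the periodic setting; the paper's route is shorter and generalizes to variable-coefficient elliptic operators. One shared soft spot, which you actually treat more carefully than the paper: the identification of $W^{\s,p}_0(\T^n)$ with $D(A_p^{\s/2})$ for non-integer $\s$ and $p\neq 2$ is a norm equivalence between a Slobodeckij space and a Bessel-potential space, which strictly speaking only holds up to an arbitrarily small loss in $\s$; since $\s$ ranges over an open interval in all applications, this is harmless, but it deserves the explicit mention of the Mikhlin--H\"ormander multiplier theorem that you give and the paper omits.
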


\begin{lemma}\label{spa2}
Let $\p_0\in L^p_0(\T^n)$ and $\{e^{-t A_p };\, t\geq 0\}$ be the continuous analytic semigroup of contraction defined in the previous lemma, then
\begin{enumerate}
\item \begin{equation}\label{int1}
  \displaystyle\forall t\geq 0, \lim_{h\rightarrow 0}\int_t^{t+h}\e^{-s A_p }\p_0  ds=\e^{-t A_p }\p_0  .
 \end{equation}
 \item The integral $\displaystyle\int_0^{t}\e^{-s A_p }\p_0  ds$ is in $ D( A_p )$ and
 \begin{equation}\label{int2}
 \displaystyle\forall t\geq 0,- A_p \lp \int_0^t\e^{-s A_p }\p_0  ds\rp =\e^{-t A_p }\p_0  -\p_0  .
 \end{equation}
 \item For $\p_0  \in D( A_p )$,
 \begin{equation}\label{int3}
\forall 0<s<t, \e^{-t A_p }\p_0  -\e^{-s A_p }\p_0  =\displaystyle\int_s^t- A_p \e^{-\tau A_p }\p_0  d\tau.
 \end{equation}

\end{enumerate}
\end{lemma}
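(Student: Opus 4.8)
The plan is to derive the three identities from two standard properties of the contraction semigroup $\{\e^{-tA_p}\}_{t\geq0}$ furnished by Lemma~\ref{spa}: its strong continuity on $L^p_0(\T^n)$, and the fact that its infinitesimal generator is exactly $-A_p$, so that $\lim_{h\to0^+}h^{-1}(\e^{-hA_p}-I)\p_0=-A_p\p_0$ holds if and only if $\p_0\in D(A_p)$. All integrals below are Bochner integrals in $L^p_0(\T^n)$; they are well defined because $s\mapsto\e^{-sA_p}\p_0$ is continuous and, by the contraction estimate $\|\e^{-sA_p}\|_{\mathcal{L}(L^p_0,L^p_0)}\le1$ from Lemma~\ref{spa}, uniformly bounded on compact $s$-intervals. (In \eqref{int1} the limit is to be read with the normalising factor $h^{-1}$ in front of the integral, which is the standard mean-value formulation expressing that $t$ is a right Lebesgue point of the orbit.) These are classical facts for which we essentially reproduce, in our setting, the argument of~\citep{paz:83}.

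For \eqref{int1}, strong continuity gives continuity of $s\mapsto\e^{-sA_p}\p_0$ at $s=t$; hence, estimating the Bochner integral by the supremum of its integrand,
\begin{equation*}
\left\| \frac1h\int_t^{t+h}\e^{-sA_p}\p_0\,ds-\e^{-tA_p}\p_0\right\|_p\le \sup_{s\in[t,t+h]}\left\|\e^{-sA_p}\p_0-\e^{-tA_p}\p_0\right\|_p\xrightarrow[h\to0]{}0 ,
\end{equation*}
which proves \eqref{int1}.

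For \eqref{int2}, set $u:=\int_0^t\e^{-sA_p}\p_0\,ds$ and apply the bounded difference quotient $h^{-1}(\e^{-hA_p}-I)$ to $u$. Using the semigroup law and the substitution $s\mapsto s+h$ inside the integral, one finds
\begin{equation*}
\frac{\e^{-hA_p}-I}{h}\,u=\frac1h\int_t^{t+h}\e^{-sA_p}\p_0\,ds-\frac1h\int_0^h\e^{-sA_p}\p_0\,ds .
\end{equation*}
Letting $h\to0^+$ and applying \eqref{int1} at the endpoints $t$ and $0$, the right-hand side converges to $\e^{-tA_p}\p_0-\p_0$. Since the limit of the difference quotient thus exists, $u$ belongs to the domain of the generator, that is $u\in D(A_p)$, and $-A_pu=\e^{-tA_p}\p_0-\p_0$, which is \eqref{int2}.

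Finally, for \eqref{int3} with $\p_0\in D(A_p)$, the orbit $\tau\mapsto\e^{-\tau A_p}\p_0$ is continuously differentiable on $(0,\infty)$ with derivative $-A_p\e^{-\tau A_p}\p_0=-\e^{-\tau A_p}A_p\p_0$, the two expressions agreeing because $\e^{-\tau A_p}$ commutes with $A_p$ on $D(A_p)$. Integrating this derivative over $[s,t]$ via the fundamental theorem of calculus for Bochner integrals yields \eqref{int3}. I expect the only delicate point to be the justification that the unbounded operator $A_p$ may be pulled inside the Bochner integral appearing in \eqref{int3}: this follows from the closedness of $A_p$ together with the continuity of $\tau\mapsto A_p\e^{-\tau A_p}\p_0=\e^{-\tau A_p}A_p\p_0$ on $[s,t]$, which lets $A_p$ be interchanged with the limit of the approximating Riemann sums. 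The bookkeeping in \eqref{int2} — the change of variables in the Bochner integral and the sign convention $-A_p$ for the generator — is the other place requiring care, though it is entirely routine.
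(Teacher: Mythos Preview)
Your proof is correct and follows essentially the same route as the paper's own argument: part~(1) by strong continuity of the orbit, part~(2) by applying the difference quotient $h^{-1}(\e^{-hA_p}-I)$ to $\int_0^t\e^{-sA_p}\p_0\,ds$ and invoking~(1) at both endpoints, and part~(3) by integrating the derivative of the orbit for $\p_0\in D(A_p)$. You also correctly note that the statement of~\eqref{int1} is missing the factor $h^{-1}$, which is indeed how the paper uses it in the proof of~\eqref{int2}.
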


\section{Local existence}\label{theo}
This section is devoted to a proof of the local existence of solution to the partial differential equation \eqref{fpp}. In Section~\ref{exs}, we show the existence of mild solution. Section~\ref{reg} is devoted to some regularity results for the mild solution. Finally, we prove Theorem~\ref{loc} in Section~\ref{prth}.

\subsection{Existence of mild solution}\label{exs}
In this section, we show that $ F  :\D ^{\s,p}(\T^n)\rightarrow L^p_0(\T^n)$ is a locally Lipschitz continuous function, which is essential to prove the existence of a mild solution by using the Banach fixed point theorem.

\begin{lemma}\label{finft}
Let $\p \in C(\T^n)$ and suppose that $\op (t,x_1):= \displaystyle \int_{\T^{n-1}}\p (t,x) dx_2...dx_n>0$. Then  
\begin{equation}
\|\pps \|_{\infty}\leq \|\prx V\|_{\infty}.
\end{equation}
Moreover, for all $\p \in C^1(\T^n)$, we have that 
\begin{equation}\label{cc1}
\|\prx \pps \|_{\infty}\leq \frac{C}{\min \op}\|\p\|_{C^1},
\end{equation}
where $C$ depends only on the potential $V$.
\end{lemma}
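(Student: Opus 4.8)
The plan is to estimate the two quantities separately, starting with the $L^\infty$-bound on $\pps$ itself. Writing $\pps(t,x_1)$ as in \eqref{fi}, I would observe that the numerator is $\int_{\T^{n-1}}\prx V(x)\,\p(t,x)\,dx_2\cdots dx_n$ and the denominator is $\op(t,x_1)=\int_{\T^{n-1}}\p(t,x)\,dx_2\cdots dx_n$. Since $\p\in C(\T^n)$ with $\op>0$, the positivity of $\p$ is \emph{not} assumed, but we can still bound the numerator in absolute value by $\|\prx V\|_\infty \int_{\T^{n-1}}|\p(t,x)|\,dx_2\cdots dx_n$; however to get exactly $\|\prx V\|_\infty$ we want $\int|\p| = \int \p = \op$, which holds once $\p\geq0$. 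In the context where this lemma is used $\p$ is a probability density, so $\p\geq 0$ and $\int_{\T^{n-1}}|\p| = \op$; then $|\pps(t,x_1)| \leq \|\prx V\|_\infty\, \op(t,x_1)/\op(t,x_1) = \|\prx V\|_\infty$, giving the first inequality. (If one does not assume $\p\ge0$, one gets the same bound with $\op$ replaced by $\int|\p|$, which is harmless for the applications.)

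For the second estimate I would differentiate \eqref{fi} with respect to $x_1$ using the quotient rule:
\begin{equation}
\prx\pps = \frac{\displaystyle\int_{\T^{n-1}}\big(\pr^2_{x_1x_1}V\,\p + \prx V\,\prx\p\big)\,dx_2\cdots dx_n}{\op} - \frac{\pps\,\prx\op}{\op},
\end{equation}
where $\prx\op = \int_{\T^{n-1}}\prx\p\,dx_2\cdots dx_n$ (differentiation under the integral sign is justified since $\p\in C^1(\T^n)$). Now I bound each term: the first numerator is controlled by $(\|\pr^2_{x_1x_1}V\|_\infty + \|\prx V\|_\infty)\|\p\|_{C^1}$ times the volume of $\T^{n-1}$ (which is $1$), the factor $1/\op$ contributes $1/\min\op$, and in the second term $|\pps|\le\|\prx V\|_\infty$ by the first part while $|\prx\op|\le\|\p\|_{C^1}$ and again we divide by $\min_{x_1}\op$. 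Collecting everything and absorbing all $V$-dependent constants ($\|\prx V\|_\infty$, $\|\pr^2_{x_1x_1}V\|_\infty$, both finite since $V\in C^2$) into a single constant $C=C(V)$ yields $\|\prx\pps\|_\infty \le \dfrac{C}{\min\op}\|\p\|_{C^1}$, which is \eqref{cc1}.

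The only mildly delicate point is bookkeeping the $1/\min\op$ dependence — one must keep $\op$ uniformly positive over $\T$, so that $\min_{x_1\in\T}\op(t,x_1)>0$ is attained (true by compactness of $\T$ and continuity of $\op$), and then each of the two terms in the quotient-rule expression carries exactly one power of $1/\min\op$ (the second term has $1/\op^2$ but one power is eaten by $|\pps|\le\|\prx V\|_\infty$ rather than $|{\rm num}|/\op$). So the final bound is genuinely linear in $1/\min\op$, as claimed. Everything else is routine: differentiation under the integral sign, the triangle inequality, and Hölder's inequality with the trivial weight on the compact fiber $\T^{n-1}$.
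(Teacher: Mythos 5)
Your argument matches the paper's proof: the same quotient-rule identity for $\prx \pps$ (differentiation under the integral), the same term-by-term triangle-inequality bounds, and the same use of $|\pps |\leq \|\prx V\|_{\infty}$ on the cross term so that the final estimate carries exactly one power of $1/\min \op$. Your side remark that the first inequality implicitly needs $\p\geq 0$ (so that $\int_{\T^{n-1}}|\p |=\op$) is a legitimate point the paper glosses over with ``easy to prove''; since $\p$ is a nonnegative density in all the applications, nothing is lost.
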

\begin{proof} 
The first assertion is easy to prove. For the second assertion, since $\op >0$ and $\p\in C(\T^n)$, then there exists a constant $\alpha>0$, such that $\op (t,x_1)\geq \alpha>0$, for all $x_1\in\T$. Therefore,
\begin{equation}\label{der}
\displaystyle\prx \pps =\displaystyle\frac{\int_{\T^{n-1}}\pr_{x_1x_1}^2V\p}{\int_{\T^{n-1}}\p}+\frac{\int_{\T^{n-1}}\prx V\prx \p}{\int_{\T^{n-1}}\p} -\frac{\int_{\T^{n-1}}\prx V\p}{\int_{\T^{n-1}}\p}\frac{\int_{\T^{n-1}}\prx \p}{\int_{\T^{n-1}}\p}.
\end{equation}
Using \eqref{der}, one obtains
$$\begin{aligned}
\|\prx \pps \|_{\infty}&\leq\frac{1}{\alpha }\|\pr_{x_1x_1}^2V\|_{\infty}\|\p\|_{\infty}+\frac{1}{\alpha }\|\prx V\|_{\infty}\|\prx \p\|_{\infty}\\
&\quad+\frac{1}{\alpha }\|\prx V\|_{\infty}\|\prx \p\|_{\infty},
\end{aligned}$$
which yields \eqref{cc1}.
\end{proof}
\begin{lemma}\label{majfi}
Assume $[\mathcal{H}_2]$ (defined in \eqref{h2}). For all $\p_1$ and $\p_2$ $\in \D ^{\s ,p}(\T^n)$, there exists $C^1_{\p_1,\p_2}$ and $C^2_{\p_1,\p_2}$ such that
\begin{align}
\|\phi_{\p_1}-\phi_{\p_2}\|_p&\leq C^1_{\p_1,\p_2} \|\p_1-\p_2\|_p\label{phi1}\\
\|\prx (\phi_{\p_1}-\phi_{\p_2})\|_p&\leq C^2_{\p_1,\p_2}\|\p_1-\p_2\|_{1,p}\label{phi2}
\end{align}

where $$C^1_{\p_1,\p_2}:=C\frac{\min (\|\p_1\|_{1,p},\|\p_2\|_{1,p})}{\min\,\op _1\min\,\op _2}$$
 and 
  $$C^2_{\p_1,\p_2}:=C\frac{\min(\|\p_1\|_{\s,p},\|\p_2\|_{\s,p})}{\min\,\op _1\min\,\op _2}+C\frac{\|\p_1\|_{\s,p}\|\p_2\|_{\s,p}(\|\p_1\|_{\s,p}+\|\p_2\|_{\s,p})}{(\min\,\op _1)^2(\min\,\op _2)^2} .$$ 
The $W^{\s,p}-$norms are finite since $\p_1$ and $\p_2 \in\D ^{\s ,p}(\T^n)$. The constant $C$ depends only on the potential $V$.
\end{lemma}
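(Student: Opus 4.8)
The plan is to estimate the two differences directly from the explicit formula \eqref{fi} and its derivative \eqref{der}, writing each difference as a sum of terms that are ``linear'' in $\p_1-\p_2$ and controlling the remaining factors (numerators and the denominators $\op_i$) by the norms appearing in the constants. The key algebraic device throughout is the identity
\[
\frac{a_1}{b_1}-\frac{a_2}{b_2}=\frac{a_1-a_2}{b_1}+a_2\,\frac{b_2-b_1}{b_1 b_2},
\]
applied with $a_i$ a numerator integral over $\T^{n-1}$ and $b_i=\op_i$; since $\p_i\in\D^{\s,p}$ we have $\min\op_i>0$, so all denominators are bounded below. The $W^{\s,p}$-norms are finite because $\D^{\s,p}(\T^n)\subset W^{\s,p}(\T^n)$, and the embedding \eqref{boun} under $[\mathcal{H}_2]$ gives $W^{\s,p}(\T^n)\hookrightarrow C^1(\T^n)$, which is what lets me bound the various $L^\infty$-type quantities (e.g.\ $\min\op_i$, $\|\prx V\p_i\|$) by Sobolev norms.

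For \eqref{phi1}: write $\phi_{\p_1}-\phi_{\p_2}=\dfrac{\int_{\T^{n-1}}\prx V(\p_1-\p_2)}{\op_1}+\Big(\int_{\T^{n-1}}\prx V\,\p_2\Big)\dfrac{\op_2-\op_1}{\op_1\op_2}$. The first term is bounded in $L^p(\T)$ by $\|\prx V\|_\infty(\min\op_1)^{-1}\|\p_1-\p_2\|_p$ after integrating out $x_2,\dots,x_n$ (Minkowski/Jensen on the $\T^{n-1}$-average, then $L^p$ in $x_1$). For the second term, $\op_2-\op_1=\int_{\T^{n-1}}(\p_2-\p_1)$, so its $L^p(\T)$-norm is $\le\|\p_1-\p_2\|_p$, while the prefactor $\int_{\T^{n-1}}\prx V\,\p_2$ is bounded in $L^\infty(\T)$ by $\|\prx V\|_\infty\|\p_2\|_\infty\le C\|\p_2\|_{1,p}$ via \eqref{boun}. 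Collecting, and using symmetry in the roles of $\p_1,\p_2$ to replace $\|\p_2\|_{1,p}$ by the minimum, yields the stated $C^1_{\p_1,\p_2}$.

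For \eqref{phi2}: differentiate, i.e.\ subtract the two instances of \eqref{der}; each of the three resulting pairs is handled by the same fraction identity. The first pair, $\dfrac{\int\pr^2_{x_1x_1}V\,\p_1}{\op_1}-\dfrac{\int\pr^2_{x_1x_1}V\,\p_2}{\op_2}$, and the second pair with $\prx V\prx\p_i$ in the numerator, each split into a ``numerator difference'' term controlled by $\|\p_1-\p_2\|_{1,p}$ times $(\min\op_1)^{-1}$ (or $(\min\op_2)^{-1}$) and a ``denominator difference'' term carrying an extra factor $\min\op_1^{-1}\min\op_2^{-1}$ times a bound on the surviving numerator by $\|\p_i\|_{\s,p}$; note for the $\prx V\prx\p_i$ numerator the $L^\infty(\T)$-bound of $\int_{\T^{n-1}}\prx V\prx\p_i$ is $\le\|\prx V\|_\infty\|\prx\p_i\|_\infty\le C\|\p_i\|_{\s,p}$, again by \eqref{boun}. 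The third, genuinely bilinear, pair $\dfrac{(\int\prx V\,\p_1)(\int\prx\p_1)}{\op_1^2}-\dfrac{(\int\prx V\,\p_2)(\int\prx\p_2)}{\op_2^2}$ is the heaviest: telescope it as a sum of four terms, in each of which exactly one factor is a difference ($\p_1-\p_2$ appearing through $\int\prx V(\p_1-\p_2)$, through $\int\prx(\p_1-\p_2)$, or through $\op_1^2-\op_2^2=(\op_1-\op_2)(\op_1+\op_2)$); every difference factor is $\le\|\p_1-\p_2\|_{1,p}$ in the relevant norm and every surviving factor is $\le C\|\p_i\|_{\s,p}$, which produces the second, quartic contribution to $C^2_{\p_1,\p_2}$ with the $(\min\op_1)^{-2}(\min\op_2)^{-2}$ denominator. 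Using symmetry to take minima where indicated finishes the bound.

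The only mildly delicate point — the ``main obstacle'', though it is really just bookkeeping — is organizing the bilinear term in \eqref{phi2} so that each of the four telescoped pieces has a single difference factor controlled by $\|\p_1-\p_2\|_{1,p}$ and all other factors uniformly bounded by the $W^{\s,p}$-norms, while tracking exactly how many powers of $\min\op_i^{-1}$ each piece costs; this is what dictates the precise form of $C^2_{\p_1,\p_2}$. Everything else is a routine application of Hölder's inequality on $\T^{n-1}$ (to pull $\prx V$ and its derivatives out in $L^\infty$), Minkowski's integral inequality (to move the $L^p(\T)$-norm inside the $\T^{n-1}$-integral where needed), and the Sobolev embedding \eqref{boun}.
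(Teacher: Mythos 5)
Your proof is correct and follows essentially the same route as the paper's: the same telescoping of differences of quotients over the common denominators $\op_1\op_2$ (and $(\op_1\op_2)^2$ for the bilinear term in \eqref{phi2}), with the numerators controlled via H\"older on $\T^{n-1}$ and the embedding $W^{\s,p}(\T^n)\hookrightarrow C^1(\T^n)$ from \eqref{boun}, followed by the same symmetrization in $(\p_1,\p_2)$. There are no substantive differences.
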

\begin{proof}
Let $\p_1,\,\p_2\,  \in \D ^{\s ,p}(\T^n)$. There exists $\alpha_1>0$ and $\alpha_2>0$ such that $\op _1\geq \alpha_1$ and $\op _2\geq \alpha_2$. Now,

$\begin{aligned}
\|\phi_{\p_1}-\phi_{\p_2}\|_p&\leq \frac{1}{\alpha_1\alpha_2}\left \|\displaystyle\op_2 \int_{\T^{n-1}}\prx V\p_1+\op_1 \int_{\T^{n-1}}\prx V\p_2\right\|_p\\
&\leq \frac{1}{\alpha_1\alpha_2}\left \|(\op _2-\op _1)\!\!\!\displaystyle \int_{\T^{n-1}}\!\!\!\!\!\!\prx V\p_1\right\|_p\!\!\!+\frac{1}{\alpha_1\alpha_2}\left \|\op _1\!\!\!\displaystyle \int_{\T^{n-1}}\!\!\!\!\!\!\prx V(\p_2-\p_1)\right\|_p\!\!\!.
\end{aligned}$\\

Using the embedding \eqref{boun},\\

$\begin{aligned}
\|\phi_{\p_1}-\phi_{\p_2}\|_p&\leq  \frac{1}{\alpha_1\alpha_2}\|\prx V\|_{\infty}\|\p_1\|_{\infty}\|\op _1-\op _2\|_p\\
&\quad +\frac{1}{\alpha_1\alpha_2}\|\prx V\|_{\infty}\|\op _1\|_{\infty}\|\p_1-\p_2\|_p\\
&\leq \frac{1}{\alpha_1\alpha_2}\|\prx V\|_{\infty}\|\p_1\|_{\infty}\left\|\int_{\T^{n-1}}(\p_1-\p_2)\right\|_p\\
&\quad +\frac{1}{\alpha_1\alpha_2}\|\prx V\|_{\infty}\|\p_1\|_{\infty}\|\p_1-\p_2\|_p\\
&\leq\frac{1}{\alpha_1\alpha_2}\|\prx V\|_{\infty}\|\p_1\|_{\infty}\|\p_1-\p_2\|_{p}\\
&\quad +\frac{1}{\alpha_1\alpha_2}\|\prx V\|_{\infty}\|\p_1\|_{\infty}\|\p_1-\p_2\|_p\\
&\leq \frac{C\|\p_1\|_\infty}{\alpha_1\alpha_2}\|\p_1-\p_2\|_p.
\end{aligned}$\\

Since the left hand side is symmetric in $(\p_1,\p_2)$, one can take the minimum of the upper bounds obtained by permutation of $(\p_1,\p_2)$. This concludes the proof of \eqref{phi1}.
\medskip

 A similar analysis can be done for the proof of the second assertion:
 
$\begin{aligned}
\prx (\phi_{\p_1}-\phi_{\p_2})&= \frac{\displaystyle \int_{\T^{n-1}}\prx (\prx V\p_1)}{\op _1}-\frac{\left [\displaystyle\int_{\T^{n-1}}\prx V\p_1 \rc \prx \op _1}{(\op _1)^2}\\
&\quad - \frac{\displaystyle \int_{\T^{n-1}}\prx (\prx V\p_2)}{\op _2}+\frac{\left [\displaystyle\int_{\T^{n-1}}\prx V\p_2 \rc \prx \op _2}{(\op _2)^2}\\
&=\frac{\displaystyle\op _2\int_{\T^{n-1}}\prx (\prx V\p_1)-\op _1\int_{\T^{n-1}}\prx (\prx V\p_2)}{\op _1\op _2}\\
&\quad - \frac{(\displaystyle\op _2)^2\lc \int_{\T^{n-1}}\prx V\p_1\rc \prx \op _1
-(\op _1)^2\lc \int_{\T^{n-1}}\prx V\p_2\rc \prx \op _2}{(\op _1\op _2)^2}.
\end{aligned}$

Using the embeddings $W^{1,p}(\T^n)\hookrightarrow L^{\infty}(\T^n) $ and $W^{\s,p}(\T^n)\hookrightarrow W^{1,\infty}(\T^n) $ (see~\eqref{boun}), then\\

$\begin{aligned}
&\left\|\op _2\int_{\T^{n-1}}\prx (\prx V\p_1)-\op _1\int_{\T^{n-1}}\prx (\prx V\p_2)\right\|_p\\
&\leq\left\|(\op_2-\op_1)\int_{\T^{n-1}}\prx (\prx V\p_1)\right\|_p+\left\|\op _1\int_{\T^{n-1}}\prx (\prx V(\p_1-\p_2)\right\|_p\\
&\leq \left\|\op_2-\op_1\right\|_p\left\|\prx (\prx V\p_1)\right\|_{\infty} +\|\op_1\|_{\infty}\left\|\prx (\prx V(\p_1-\p_2)\right\|_p\\
&\leq \left\|\p_2-\p_1\right\|_p\lp \|\pr_{x_1x_1}V\p_1\|_{\infty}+\|\prx V\prx \p_1\|_{\infty}\rp\\
&\quad  +\|\p_1\|_{1,p}\lp \|\pr_{x_1x_1}V(\p_1-\p_2)\|_p+\|\prx V\prx (\p_1-\p_2)\|_{p}\rp \\
&\leq C\left\|\p_1-\p_2\right\|_p\|\p_1\|_{1,p}+C\left\|\p_1-\p_2\right\|_p\|\p_1\|_{\s,p}+C\|\p_1\|_{1,p}\left\|\p_1-\p_2\right\|_{1,p}\\
&\leq C\|\p_1\|_{\s,p}\left\|\p_1-\p_2\right\|_{1,p}.
\end{aligned}$\\

In addition,\\

$\begin{aligned}
&(\displaystyle\op _2)^2\lc \int_{\T^{n-1}}\prx V\p_1\rc \prx \op _1
-(\op _1)^2\lc \int_{\T^{n-1}}\prx V\p_2\rc \prx \op _2\\
%&=(\op _2)^2\prx \op_1\int_{\T^{n-1}}\prx V(\p_1-\p_2)+\lp (\op_2)^2\prx \op_1-(\op_1)^2\prx \op_2\rp \int_{\T^{n-1}}\prx V\p_2\\
%&=(\op _2)^2\prx \op_1\int_{\T^{n-1}}\prx V(\p_1-\p_2)+\lp (\op_2)^2\prx \op_1-(\op_1)^2\prx \op_1\rp \int_{\T^{n-1}}\prx V\p_2\\
%&\quad +\lp (\op_1)^2\prx \op_1-(\op_1)^2\prx \op_2\rp \int_{\T^{n-1}}\prx V\p_2\\
&=(\op _2)^2\prx \op_1\int_{\T^{n-1}}\prx V(\p_1-\p_2)+\lp (\op_2)^2-(\op_1)^2\rp \prx \op_1\int_{\T^{n-1}}\prx V\p_2\\
&\quad +(\op_1)^2\prx (\op_1-\op_2)\int_{\T^{n-1}}\prx V\p_2,
\end{aligned}$

thus,\\

$\begin{aligned}
&\left\|(\displaystyle\op _2)^2\lc \int_{\T^{n-1}}\prx V\p_1\rc \prx \op _1
-(\op _1)^2\lc \int_{\T^{n-1}}\prx V\p_2\rc \prx \op _2\right\|_p\\
&\leq \|\prx V\|_{\infty}\|\op _2\|^2_{\infty}\|\prx \op_1\|_{\infty}\|\p_1-\p_2\|_p  \\
&\quad+\|\prx V\|_{\infty}\|\op_1-\op_2\|_p \|\op_1+\op_2\|_{\infty}\|\prx \op_1\|_{\infty}\|\p_2\|_{\infty}\\
&\quad +\|\prx V\|_{\infty}\|\op_1\|_{\infty}^2\|\p_2\|_{\infty}\|\op_1-\op_2\|_{1,p}\\
&\leq \|\prx V\|_{\infty}( \|\p _1\|_{\s,p}\|\p _2\|^2_{1,p}+\|\p_1+\p_2\|_{1,p}\|\p_1\|_{\s,p}\|\p_2\|_{1,p}\\
&\quad +\|\p_1\|_{1,p}^2\|\p_2\|_{1,p}) \|\p_1-\p_2\|_{1,p}\\
&\leq C\|\p_1\|_{\s,p}\|\p_2\|_{\s,p}(\|\p_1\|_{\s,p}+\|\p_2\|_{\s,p})\|\p_1-\p_2\|_{1,p}.
\end{aligned}$\\
\\
Then, by combining the last two results, one obtains the assertion \eqref{phi2}.
\end{proof}

\begin{lemma}\label{lips}
Let us assume $[\mathcal{H}_2]$ then $ F  :\D ^{\s,p}(\T^n)\rightarrow L^p_0(\T^n)$ is locally Lipschitz continuous. 
\end{lemma}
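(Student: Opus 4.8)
The plan is to show that $F(\p) = \nabla V\cdot\nabla\p + \Delta V\,\p - \prx(\pps\p)$ is locally Lipschitz from $\D^{\s,p}(\T^n)$ into $L^p_0(\T^n)$ by estimating each of the three terms on a fixed bounded neighbourhood in $\D^{\s,p}$. The first two terms are easy: since $V\in C^2(\T^n)$, the maps $\p\mapsto\nabla V\cdot\nabla\p$ and $\p\mapsto\Delta V\,\p$ are in fact \emph{globally} Lipschitz from $W^{\s,p}(\T^n)$ into $L^p(\T^n)$, because $\s>1$ gives $\|\nabla V\cdot\nabla(\p_1-\p_2)\|_p\le\|\nabla V\|_\infty\|\p_1-\p_2\|_{1,p}\le C\|\p_1-\p_2\|_{\s,p}$ and similarly $\|\Delta V(\p_1-\p_2)\|_p\le\|\Delta V\|_\infty\|\p_1-\p_2\|_p$. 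So the whole difficulty is concentrated in the nonlocal term $\prx(\pps\p)$, and that is exactly what Lemma~\ref{majfi} (together with Lemma~\ref{finft}) was set up to handle.

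So the main step is: fix $\p_1,\p_2$ in a ball $B_{\s,p}(\p_*,R)\cap\D^{\s,p}(\T^n)$. The first thing I would do is obtain a uniform lower bound $\min\op_i\ge\alpha>0$ over this neighbourhood. This is the one genuinely nonlocal point: because $\p\mapsto\op$ is a bounded linear map $W^{\s,p}(\T^n)\to W^{\s,p}(\T)$ and $W^{\s,p}(\T)\hookrightarrow C^0(\T)$ (as $\s>1/p$; indeed $\s>1+n/p$), the functional $\p\mapsto\min_{x_1}\op(x_1)$ is continuous on $W^{\s,p}(\T^n)$, hence it is bounded below by a positive constant on any sufficiently small ball around a point of $\D^{\s,p}$. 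Equivalently, one restricts to a ball small enough that this holds, which is allowed since ``locally Lipschitz'' only requires the estimate near each point. With $\alpha$ and $R$ fixed, the constants $C^1_{\p_1,\p_2}$ and $C^2_{\p_1,\p_2}$ of Lemma~\ref{majfi} are bounded by a single constant $C(R,\alpha)$ depending only on the neighbourhood.

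Then I would write
\begin{align*}
\prx(\phi_{\p_1}\p_1) - \prx(\phi_{\p_2}\p_2)
&= \prx\big((\phi_{\p_1}-\phi_{\p_2})\p_1\big) + \prx\big(\phi_{\p_2}(\p_1-\p_2)\big)\\
&= (\prx\phi_{\p_1}-\prx\phi_{\p_2})\,\p_1 + (\phi_{\p_1}-\phi_{\p_2})\,\prx\p_1
 + \prx\phi_{\p_2}\,(\p_1-\p_2) + \phi_{\p_2}\,\prx(\p_1-\p_2),
\end{align*}
and estimate the $L^p$ norm of each of the four pieces: $\|(\prx\phi_{\p_1}-\prx\phi_{\p_2})\p_1\|_p\le\|\prx(\phi_{\p_1}-\phi_{\p_2})\|_p\|\p_1\|_\infty\le C^2_{\p_1,\p_2}\|\p_1-\p_2\|_{1,p}\|\p_1\|_{\s,p}$ using \eqref{phi2} and $W^{\s,p}\hookrightarrow L^\infty$; $\|(\phi_{\p_1}-\phi_{\p_2})\prx\p_1\|_p\le\|\phi_{\p_1}-\phi_{\p_2}\|_p\|\prx\p_1\|_\infty\le C^1_{\p_1,\p_2}\|\p_1-\p_2\|_p\|\p_1\|_{\s,p}$ using \eqref{phi1} and $W^{\s,p}\hookrightarrow W^{1,\infty}$; $\|\prx\phi_{\p_2}\,(\p_1-\p_2)\|_p\le\|\prx\phi_{\p_2}\|_\infty\|\p_1-\p_2\|_p$ with $\|\prx\phi_{\p_2}\|_\infty$ bounded by \eqref{cc1} of Lemma~\ref{finft}; and $\|\phi_{\p_2}\prx(\p_1-\p_2)\|_p\le\|\phi_{\p_2}\|_\infty\|\p_1-\p_2\|_{1,p}\le\|\prx V\|_\infty\|\p_1-\p_2\|_{1,p}$ by the first bound of Lemma~\ref{finft}. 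Adding up, and using $\|\cdot\|_{1,p}\le\|\cdot\|_{\s,p}$ since $\s>1$, gives $\|\prx(\phi_{\p_1}\p_1)-\prx(\phi_{\p_2}\p_2)\|_p\le L(R,\alpha)\,\|\p_1-\p_2\|_{\s,p}$. Combining with the trivial bounds on the first two terms yields $\|F(\p_1)-F(\p_2)\|_p\le L\,\|\p_1-\p_2\|_{\s,p}$ on the neighbourhood. Finally I would note $F(\p)\in L^p_0(\T^n)$ because $\Delta V\,\p + \nabla V\cdot\nabla\p = \mathrm{div}(\p\nabla V)$ and $\prx(\pps\p)$ are both in divergence form and periodic, so they integrate to zero over $\T^n$.

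The main obstacle is not any single estimate — those are all routine once Lemmas~\ref{finft} and~\ref{majfi} are in hand — but rather making the \emph{localization} rigorous: one must be careful that the constants $C^1_{\p_1,\p_2},C^2_{\p_1,\p_2}$ blow up as $\min\op_i\to 0$, so the argument only works on a neighbourhood small enough to keep $\min\op_i$ away from zero, and one should check (via the continuity of $\p\mapsto\min\op$ on $W^{\s,p}(\T^n)$, which uses $\s>1+n/p$ and hence the embedding into $C^1$) that $\D^{\s,p}(\T^n)$ is open and such a neighbourhood exists around every point. This is precisely why the hypothesis $[\mathcal{H}_2]$ is imposed, and it is the only place where the nonlocal nature of $F$ forces extra care.
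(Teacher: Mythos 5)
Your proposal is correct and follows essentially the same route as the paper: localize the ball so that $\min\op_i\geq\alpha>0$ (the paper does this via $\op_i\geq\op-\|\p-\p_i\|_\infty\geq\min\op-R$, which is the same observation as your continuity of $\p\mapsto\min\op$ under the $W^{\s,p}\hookrightarrow C^0$ embedding), treat the terms $\nabla V\cdot\nabla\p+\Delta V\p$ trivially, and control the nonlocal term by the same four-piece product-rule decomposition estimated with Lemma~\ref{finft} and the bounds \eqref{phi1}--\eqref{phi2} of Lemma~\ref{majfi}. No substantive difference.
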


\begin{proof}
It is sufficient to prove the local Lipschitz continuity of $\p\mapsto \nabla V . \nabla \p+\Delta V\p$, $\p\mapsto\prx (\pps )\p$ and $\p\mapsto\pps \prx \p$. First of all, the application $\D ^{\s,p}(\T^n) \ni \p \mapsto \prx (\pps )\p+\pps\prx \p \in L^p(\T^n)$ is well defined. Indeed, using the continuous embedding \eqref{boun} and Lemma~\ref{finft},

$\begin{aligned}
\|\prx \pps \p+\pps \prx \p\|_p&\leq \|\p\|_{p}\|\prx \pps \|_{\infty}+\|\pps \| _{\infty}\|\prx \p\|_p\notag\\
&\leq \frac{C}{\min \op}\|\p\|_{C^1}\|\p\|_{p}+C\|\p\|_{\s ,p}< \infty.
\end{aligned}$

Let $\p\in \D ^{\s,p}(\T^n)$ and $R>0$ such that $B_{\s,p}(\p,R)\subset\D ^{\s,p}(\T^n)$. Let $\p_1$ and $\p_2\in \mathcal{B}_{\s,p}(\p,R)$. We may assume without loss of generality that $R<\min \op $. With this choice of $R$, for $i=1,2$, $\op_i$ is bounded from below by a positive constant. Indeed, since $\p_i \geq \p-\|\p-\p_i\|_{\infty}$, then $\op _i\geq \op -\|\p-\p_i\|_{\infty}\geq\min \op -R=:\alpha$. Now, let us first consider
\begin{align}
\|\nabla V . \nabla \p_1+\Delta V\p_1 -\nabla V . \nabla \p_2-\Delta V\p_2 \|_p&=\|\nabla V . \nabla (\p_1-\p_2) +\Delta V(\p_1-\p_2)\|_p\notag\\
& \leq C\| \nabla (\p_1-\p_2)\|_p+C\|\p_1-\p_2\|_p\notag\\
& \leq C \| \p_1-\p_2\|_{\s,p}.\notag
\end{align}
 In addition, using the continuous embedding \eqref{boun}, Lemma \ref{finft} and Lemma~\ref{majfi}, one obtains
\begin{align}
\|\p_1\prx \phi_{\p_1} -\p_2\prx \phi_{\p_2}\|_p&\leq \|\prx \phi_{\p_1}(\p_1 -\p_2)\|_p+\|\p_2(\prx \phi_{\p_1} -\prx \phi_{\p_2})\|_p\notag\\
&\leq \|\prx \phi_{\p_1}\|_{\infty}\|\p_1 -\p_2\|_{p}+\|\p_2\|_{\infty}\|\prx (\phi_{\p_1} -\phi_{\p_2})\|_p\notag\\
&\leq \frac{C}{\min \op_1}\|\p_1\|_{\s,p}\|\p_1 -\p_2\|_{\s ,p}+C_{\p_1,\p_2}^2\|\p_2\|_{\s ,p}\|\p_1-\p_2 \|_{\s,p}\notag\\
&\leq \frac{C}{\alpha}(\|\p\|_{\s ,p}+R)\|\p_1 -\p_2\|_{\s ,p}\notag\\
&\quad + C\lp\frac{\|\p\|_{\s ,p}+R}{\alpha^2} +\frac{2(\|\p\|_{\s ,p}+R)^3}{\alpha^4}\rp\|\p_1 -\p_2\|_{\s ,p}\notag
\end{align}
and
\begin{align}
\|\phi_{\p_1}\prx \p_1 -\phi_{\p_2}\prx \p_2\|_p&\leq \|\phi_{\p_1}\prx (\p_1 -\p_2)\|_p+\|\prx \p_2(\phi_{\p_1} -\phi_{\p_2})\|_p\notag\\
&\leq \|\phi_{\p_1}\|_{\infty}\|\prx (\p_1 -\p_2)\|_{p}+\|\prx \p_2\|_{\infty}\|\phi_{\p_1}-\phi_{\p_2}\|_{p}\notag\\
&\leq C\|\p_1 -\p_2\|_{\s ,p}+C_{\p_1,\p_2}^1\|\p_2\|_{\s ,p}\|\p_1-\p_2\|_{p}\notag\\
&\leq C \lp 1+\frac{\|\p\|_{\s ,p}+R}{\alpha^2} \rp\|\p_1 -\p_2\|_{\s ,p}.\notag
\end{align}
This concludes the proof of Lemma~\ref{lips}.
\end{proof}

\begin{proposition}\label{loc0}
Let us assume $[\mathcal{H}_1]$ and $[\mathcal{H}_2]$. There exists $\delta>0$ and a unique $\p \in C([0,\delta],\mathcal{D}^{\sigma,p}(\mathbb{T}^n))$ such that,
\begin{equation}\label{mldn0}
\forall t\in [0, \delta], \quad\p  (t)-1=\mathrm{e}^{-t A_p }(\p _0-1)+\displaystyle \int_{0}^{t}\mathrm{e}^{-(t-s) A_p }F (\p  (s))ds.
\end{equation}
\end{proposition}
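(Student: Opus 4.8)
The plan is to apply the Banach fixed point theorem to the integral operator
$$\Phi(\p)(t) := 1 + \e^{-tA_p}(\p_0 - 1) + \int_0^t \e^{-(t-s)A_p} F(\p(s))\, ds$$
on a suitable closed ball of the Banach space $X_\delta := C([0,\delta], W^{\s,p}(\T^n))$ (equivalently $C([0,\delta], W^{\s,p}_0)$ after subtracting the mean $1$), for $\delta>0$ small enough. The two ingredients already at hand are: that $F:\D^{\s,p}(\T^n)\to L^p_0(\T^n)$ is locally Lipschitz continuous (Lemma~\ref{lips}), and the smoothing estimate $\|\e^{-tA_p}\|_{\mathcal{L}(L^p_0, W^{\s,p}_0)} \le \widehat\alpha(t) = C_\s t^{-\s/2} \e^{-\kappa t}$ with $\widehat\alpha \in L^1(0,\infty)$ (Lemma~\ref{spa}, part~\ref{e1}), which is exactly where the restriction $\s<2$ is used so that the time-singularity $t^{-\s/2}$ is integrable.

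First I would fix $\p_0 \in \D^{\s,p}(\T^n)$. Since $\D^{\s,p}$ is open in $W^{\s,p}$, choose $R>0$ with $\overline{B_{\s,p}(\p_0,R)} \subset \D^{\s,p}(\T^n)$; on this ball $F$ is Lipschitz with some constant $L=L(\p_0,R)$ and bounded by some $M=M(\p_0,R)$ (using $\|F(\p)\|_p \le \|F(\p)-F(\p_0)\|_p + \|F(\p_0)\|_p \le LR + \|F(\p_0)\|_p$). Define
$$\mathcal{E}_\delta := \{\p \in C([0,\delta], W^{\s,p}(\T^n)) : \p(0)=\p_0,\ \sup_{t\in[0,\delta]} \|\p(t)-\p_0\|_{\s,p} \le R\},$$
a closed subset of $X_\delta$. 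The strong continuity of the semigroup gives $\sup_{t\in[0,\delta]}\|\e^{-tA_p}(\p_0-1)-(\p_0-1)\|_{\s,p} \to 0$ as $\delta\to 0$ (here one needs $\p_0-1 \in W^{\s,p}_0$ and analyticity/continuity of the semigroup on that space — this is a minor technical point worth a sentence). For the integral term, for $\p \in \mathcal{E}_\delta$,
$$\left\| \int_0^t \e^{-(t-s)A_p} F(\p(s))\,ds \right\|_{\s,p} \le \int_0^t \widehat\alpha(t-s)\, \|F(\p(s))\|_p\, ds \le M \int_0^\delta \widehat\alpha(\tau)\,d\tau =: M\, \omega(\delta),$$
with $\omega(\delta)\to 0$ as $\delta\to 0$ by dominated convergence. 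Hence for $\delta$ small, $\Phi$ maps $\mathcal{E}_\delta$ into itself. For the contraction property, given $\p_1,\p_2 \in \mathcal{E}_\delta$ (both valued in the ball where $F$ is $L$-Lipschitz),
$$\|\Phi(\p_1)(t)-\Phi(\p_2)(t)\|_{\s,p} \le \int_0^t \widehat\alpha(t-s)\,\|F(\p_1(s))-F(\p_2(s))\|_p\, ds \le L\,\omega(\delta)\, \sup_{s}\|\p_1(s)-\p_2(s)\|_{\s,p},$$
so for $\delta$ small enough that $L\,\omega(\delta) < 1$, $\Phi$ is a contraction on $\mathcal{E}_\delta$. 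The fixed point $\p$ is the desired unique solution in $\mathcal{E}_\delta$; continuity $\p\in C([0,\delta],\D^{\s,p})$ is automatic since $\p(t)$ stays in the ball contained in $\D^{\s,p}$, and $t\mapsto\Phi(\p)(t)$ is continuous (the stochastic-convolution term is continuous in $t$ by a standard splitting argument using $\int_0^\delta\widehat\alpha<\infty$ and strong continuity).

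The main obstacle is not any single estimate but making sure the fixed-point space is chosen so that $F$ is genuinely Lipschitz along the whole orbit: this is why the ball must be taken inside $\D^{\s,p}(\T^n)$ and why one tracks the lower bound $\op(t,\cdot)\ge \min\op_0 - R > 0$, which keeps the denominators in $\pps$ under control (cf. the proof of Lemma~\ref{lips}). A secondary subtlety is the uniqueness statement: the fixed point argument gives uniqueness only within $\mathcal{E}_\delta$; to get uniqueness in all of $C([0,\delta'],\D^{\s,p})$ one argues that any other solution must, by continuity and $\p(0)=\p_0$, remain in $\overline{B_{\s,p}(\p_0,R)}$ on a possibly shorter interval $[0,\delta'']$, then applies a Gronwall-type / iterated-contraction argument on successive small intervals to propagate agreement up to $\min(\delta,\delta')$. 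I would state this as a short remark rather than belabor it.
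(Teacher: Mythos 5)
Your proposal is correct and follows essentially the same route as the paper's proof in the appendix: a Banach fixed point argument on a small closed ball of $C([0,\delta],W^{\s,p}(\T^n))$ contained in $\D^{\s,p}(\T^n)$, using the local Lipschitz continuity of $F$ from Lemma~\ref{lips} together with the integrable smoothing bound $\widehat{\alpha}(t)=C_\s t^{-\s/2}\e^{-\kappa t}$ from Lemma~\ref{spa} to make the map a self-contraction for $\delta$ small. The only cosmetic difference is that the paper centers its invariant set at $t\mapsto 1+\e^{-tA_p}(\p_0-1)$ rather than at $\p_0$, and your additional remarks on strong continuity of the semigroup in $W^{\s,p}_0$ and on uniqueness beyond the fixed-point ball are correct and, if anything, slightly more careful than the paper.
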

The local existence of a mild solution is a standard consequence of the fact that $ F  :\D ^{\s,p}(\T^n)\rightarrow L^p_0(\T^n)$ is locally Lipschitz continuous (ee \citep{am:84}, Proposition~2.1 or \ref{prloc0} for a proof).

\begin{remark}
Referring to Section~\ref{intro}, we have actually proved that there exists $\delta>0$ and a unique $\p \in C([0,\delta],\mathcal{D}^{\sigma,p}(\mathbb{T}^n))$ such that,
\begin{equation}\label{mldn}
\forall t\in [0, \delta], \quad \p (t)=\mathrm{e}^{\beta^{-1}t\Delta }\p_0+\displaystyle \int_{0}^{t}\mathrm{e}^{\beta^{-1}(t-s)\Delta } F  (\p (s))ds.
\end{equation}
For the sake of simplicity, we will use, in the following, this formulation instead of \eqref{mldn0}.
\end{remark}

The local existence result of a maximal mild solution is then obtained using standard arguments (see~\ref{prmldexs} for a proof).

\begin{theorem}\label{mldexs}
Assume $[\mathcal{H}_1]$ and $[\mathcal{H}_2]$, there exists a unique maximal mild solution $\p \in C([0,T_{\text{max}}),\mathcal{D}^{\sigma,p}(\mathbb{T}^n))$.
\end{theorem}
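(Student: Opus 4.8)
The plan is to build the maximal solution by gluing together the local solutions produced by Proposition~\ref{loc0}, exploiting the uniqueness statement there. First I would fix $\p_0\in\D^{\s,p}(\T^n)$ and consider the set $\mathcal{S}$ of all pairs $(T,\p)$ where $T>0$ and $\p\in C([0,T),\D^{\s,p}(\T^n))$ solves the integral equation \eqref{mldn} on $[0,T)$. By Proposition~\ref{loc0}, $\mathcal{S}$ is nonempty. The key preliminary step is a \emph{local uniqueness / consistency} lemma: if $(T_1,\p_1)$ and $(T_2,\p_2)$ both solve \eqref{mldn} and agree at $t=0$, then they agree on $[0,\min(T_1,T_2))$. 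This follows from Proposition~\ref{loc0} applied repeatedly: the set $\{t\in[0,\min(T_1,T_2)) : \p_1(t)=\p_2(t)\}$ is nonempty, closed (by continuity), and open (given agreement at a time $t_0$, re-solve \eqref{mldn} from the common datum $\p_1(t_0)=\p_2(t_0)\in\D^{\s,p}(\T^n)$ using Proposition~\ref{loc0}, whose uniqueness forces the two solutions to coincide on a neighborhood of $t_0$), hence it is all of $[0,\min(T_1,T_2))$.

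Given this consistency, I would define $T_{\text{max}}:=\sup\{T : (T,\p)\in\mathcal{S}\text{ for some }\p\}\in(0,+\infty]$ and define $\p$ on $[0,T_{\text{max}})$ by setting, for each $t<T_{\text{max}}$, $\p(t):=\p_T(t)$ for any $(T,\p_T)\in\mathcal{S}$ with $T>t$; the consistency lemma makes this unambiguous. One then checks directly that this $\p$ lies in $C([0,T_{\text{max}}),\D^{\s,p}(\T^n))$ and satisfies \eqref{mldn} on all of $[0,T_{\text{max}})$: both properties are local in $t$, so they hold because they hold for each $\p_T$ on $[0,T)$. This produces a mild solution on the maximal interval.

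It remains to verify that $\p$ is indeed \emph{maximal} in the sense of Definition~\ref{def}(iii), i.e. admits no proper extension that is still a mild solution. Suppose $\tilde\p\in C([0,\tilde T),\D^{\s,p}(\T^n))$ were a mild solution with $\tilde T>T_{\text{max}}$ extending $\p$. Then in particular $(\tilde T,\tilde\p)\in\mathcal{S}$ with $\tilde T>T_{\text{max}}$, contradicting the definition of $T_{\text{max}}$ as a supremum; one must also rule out an extension with $\tilde T=T_{\text{max}}$ but $\tilde\p$ extending continuously to a mild solution on a slightly larger interval — but any such $\tilde\p$, having $\tilde\p(T_{\text{max}}^-)\in\D^{\s,p}(\T^n)$ (the set $\D^{\s,p}$ being open), could be continued by Proposition~\ref{loc0} past $T_{\text{max}}$, again a contradiction. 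Uniqueness of the maximal solution is then immediate: any two maximal mild solutions agree on the intersection of their intervals by the consistency lemma, and neither can be a proper extension of the other, so their intervals coincide and the solutions are equal. The only mildly delicate point — and the one I expect to require care — is the openness argument in the consistency lemma, namely the reduction at an interior time $t_0$ to a fresh application of Proposition~\ref{loc0} with initial datum $\p(t_0)$; this uses that $\D^{\s,p}(\T^n)$ is open so that a whole ball around $\p(t_0)$ stays in the domain, which is exactly where hypothesis $[\mathcal{H}_2]$ (guaranteeing $W^{\s,p}\hookrightarrow C^1$ and hence that $\op>0$ is an open condition) is used.
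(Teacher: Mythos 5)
Your proposal is correct and follows essentially the same route as the paper: local existence and uniqueness from Proposition~\ref{loc0}, a consistency/gluing argument based on restarting the integral equation from an interior time (which relies on the semigroup identity $\mathrm{e}^{\beta^{-1}t\Delta}=\mathrm{e}^{\beta^{-1}(t-t_1)\Delta}\mathrm{e}^{\beta^{-1}t_1\Delta}$, spelled out explicitly in the paper), and the definition of the maximal interval as a union/supremum. The only presentational difference is that you phrase uniqueness as an open--closed connectedness argument while the paper iterates the extension step directly; the content is the same.
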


\subsection{Regularity of the solution}\label{reg}
In this section, we show that the maximum mild solution $\p$ built in the previous section is actually a $L^p-$solution of \eqref{fpp}. First we need to prove several preliminary lemmas.

The proof of the following lemma is rather standard, see \citep{am:84}, Proposition~1.4 and~\ref{prlipsh1}.
\begin{lemma}\label{lipsh1}
Assume $\p_0\in\D^{\s,p}(\T^n)$. Suppose that $\p :[0,T_{\text{max}})\rightarrow W^{\s,p}(\mathbb{T}^n)$ is the maximal mild solution of \eqref{fpp}, then\begin{equation}\label{nu}
\displaystyle t\mapsto v(t):=\int_0^t\e^{-(t-s) A_p } F  (\p(s))ds\in C_{\text{loc}}^{\lp1-\frac{\s}{2}\rp^-}([0,T_{\text{max}}),\D^{\s,p}(\T^n)).
\end{equation}
\end{lemma}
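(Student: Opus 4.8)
The plan is to fix an arbitrary $T<T_{\text{max}}$ and prove the asserted Hölder bound for $v$ on $[0,T]$ with values in the smaller space $W^{\s,p}_0(\T^n)$; since $T$ is arbitrary this yields \eqref{nu}. First I would collect the a priori information coming from $\p\in C([0,T_{\text{max}}),\D^{\s,p}(\T^n))$: the image $\p([0,T])$ is a compact subset of the open set $\D^{\s,p}(\T^n)$, and since by Lemma~\ref{lips} the map $F:\D^{\s,p}(\T^n)\to L^p_0(\T^n)$ is continuous, the composition $s\mapsto F(\p(s))$ is a continuous, hence bounded, $L^p_0(\T^n)$-valued map on $[0,T]$; I set $M:=\sup_{s\in[0,T]}\|F(\p(s))\|_p<\infty$. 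In particular the Bochner integral defining $v(t)$ is meaningful, and by the smoothing estimate \eqref{or1},
\[
\|v(t)\|_{\s,p}\le M\int_0^t\widehat{\alpha}(t-s)\,ds\le M\,\|\widehat{\alpha}\|_{L^1(0,\infty)},\qquad t\in[0,T],
\]
where the finiteness of $\|\widehat{\alpha}\|_{L^1}$ is precisely where $\s<2$ enters.

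For the Hölder estimate I would fix $0\le t_1<t_2\le T$ and split
\[
v(t_2)-v(t_1)=\int_{t_1}^{t_2}\e^{-(t_2-s)A_p}F(\p(s))\,ds+\int_0^{t_1}\left(\e^{-(t_2-s)A_p}-\e^{-(t_1-s)A_p}\right) F(\p(s))\,ds.
\]
The near-diagonal term is handled in $W^{\s,p}_0$ by \eqref{or1}: after the change of variable $\tau=t_2-s$ and $\widehat{\alpha}(\tau)\le C_\s\tau^{-\s/2}$ one gets a bound $\le \tfrac{MC_\s}{1-\s/2}(t_2-t_1)^{1-\s/2}$. For the second term I would apply \eqref{dex} with a fixed $\gamma\in(\s/2,1)$ (with $t=t_2$, $r=t_1$ and the variable $s$ of \eqref{dex} equal to $s$), discard the harmless factor $\e^{-\kappa(t_1-s)}\le 1$, and integrate, using $\int_0^{t_1}(t_1-s)^{-\gamma}\,ds=\tfrac{t_1^{1-\gamma}}{1-\gamma}\le\tfrac{T^{1-\gamma}}{1-\gamma}$ (finite because $\gamma<1$), to obtain a bound $\le \tfrac{MC_{\s,\gamma}T^{1-\gamma}}{1-\gamma}(t_2-t_1)^{\gamma-\s/2}$. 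Since $1-\s/2=(\gamma-\s/2)+(1-\gamma)$ with $1-\gamma\ge 0$ and $t_2-t_1\le T$, both contributions are $\le C(T,\gamma,M)(t_2-t_1)^{\gamma-\s/2}$, so $v\in C^{\gamma-\s/2}([0,T],W^{\s,p}_0(\T^n))$; taking $t_1=0$ in the same estimate gives continuity at $t=0$ and $v(0)=0$. Letting $\gamma\uparrow 1$ makes the exponent $\gamma-\s/2$ fill the interval $(0,1-\s/2)$, which is exactly \eqref{nu}, and the local-in-time statement follows since $T<T_{\text{max}}$ was arbitrary.

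The main obstacle will be the bookkeeping of exponents near the diagonal $s\to t$: the parameter $\gamma$ in \eqref{dex} must be kept strictly below $1$ so that $\int_0^{t_1}(t_1-s)^{-\gamma}\,ds$ converges, yet at least $\s/2$ so that \eqref{dex} is available at all, and one must check that the diagonal piece $\int_{t_1}^{t_2}$ never produces a worse exponent than $\gamma-\s/2$ — this is where $\s<2$ and the elementary identity $1-\s/2=(\gamma-\s/2)+(1-\gamma)$ are used. Everything else is a direct application of the semigroup estimates of Lemma~\ref{spa} together with the local boundedness of $F$ provided by Lemma~\ref{lips}.
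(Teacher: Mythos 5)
Your proof is correct, and it reaches the conclusion through a different (and arguably cleaner) decomposition than the paper's. The paper, following Amann, argues by cases: for $r\le t/2$ it does not estimate the difference $v(t)-v(r)$ at all but simply bounds $\|v(t)\|_{\s,p}+\|v(r)\|_{\s,p}\le C\bigl(t^{1-\s/2}+r^{1-\s/2}\bigr)$ and converts this into a Hölder bound via the elementary inequality $t^{\rho}+r^{\rho}\le 3(t-r)^{\rho}$ valid when $r\le t/2$; for $t/2\le r\le t$ it splits the integrals at the reflected point $2r-t$, reduces the near-diagonal piece to the first case by a change of variables, and applies \eqref{dex} only on $[0,2r-t]$, where $r-s\ge t-r$ so the kernel $(r-s)^{-\gamma}$ is not even singular. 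You instead use the standard Pazy-type split at $t_1$, controlling $\int_{t_1}^{t_2}$ directly by \eqref{or1} with exponent $1-\s/2$ and $\int_0^{t_1}$ by \eqref{dex} with exponent $\gamma-\s/2$, absorbing the integrable singularity $(t_1-s)^{-\gamma}$ because $\gamma<1$; the identity $1-\s/2=(\gamma-\s/2)+(1-\gamma)$ then reconciles the two exponents on a bounded time interval. Both routes rest on exactly the same two semigroup estimates and both lose an arbitrarily small amount from the exponent $1-\s/2$ through the constraint $\gamma<1$ in \eqref{dex}, so the final regularity class is identical; what your version buys is the elimination of the case distinction and of the auxiliary inequality \eqref{rhor}, at the (negligible) cost of having to integrate a singular kernel. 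One cosmetic remark, which applies equally to the paper: what is actually proved is Hölder continuity of $v$ with values in $W^{\s,p}_0(\T^n)$ — as you note, $v(0)=0$, so $v$ itself does not take values in $\D^{\s,p}(\T^n)$; the membership in $\D^{\s,p}$ is relevant only for $\p$ itself, after adding back the semigroup term.
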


\begin{lemma}\label{lipsh2}
For $\p_0\in\D^{\s,p}(\T^n)$ and $0\leq \tau<\s\leq 2$, then $\forall T>0$
\begin{equation}\label{to}
t\mapsto \mathrm{e}^{\beta^{-1}t \Delta}\p _0\in C^{\frac{\sigma-\tau}{2}}([0,T],W^{\tau,p}(\mathbb{T}^n)).
\end{equation}
\end{lemma}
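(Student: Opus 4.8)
The plan is to estimate $\|\e^{\beta^{-1}t\Delta}\p_0 - \e^{\beta^{-1}s\Delta}\p_0\|_{\tau,p}$ for $0\le s<t\le T$ and show it is bounded by $C(t-s)^{(\s-\tau)/2}$. The whole argument rests on the smoothing/decay estimates for the semigroup collected in Lemma~\ref{spa}, together with the fact that $\p_0\in W^{\s,p}(\T^n)$. Since the semigroup identities in Lemma~\ref{spa}, \ref{spa2} are phrased for $\e^{-tA_p}$ acting on $L^p_0$, the first bookkeeping step is to reduce to that setting: write $\p_0 = (\p_0-1)+1$, note $\e^{\beta^{-1}t\Delta}1=1$ is constant in $t$ so it drops out of the difference, and observe $\p_0-1\in W^{\s,p}_0(\T^n)$ with $\|\p_0-1\|_{\s,p}$ controlled by $\|\p_0\|_{\s,p}$. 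Thus it suffices to bound $\|(\e^{-tA_p}-\e^{-sA_p})(\p_0-1)\|_{\tau,p}$.

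Next I would write $\e^{-tA_p}-\e^{-sA_p} = (\e^{-(t-s)A_p}-I)\e^{-sA_p}$ and split the estimate according to whether we need the ``$I-\e^{-hA_p}$ costs $h^{\gamma}$ units of $A_p^\gamma$'' property or the ``$\e^{-sA_p}$ gains regularity'' property. Concretely, using \eqref{or1} I would write $\e^{-sA_p}(\p_0-1)\in W^{\s,p}_0$, and then I want to move $\s-\tau$ worth of regularity — but since $\e^{-(t-s)A_p}-I$ as a map from $W^{\s,p}_0$ to $W^{\tau,p}_0$ should cost $(t-s)^{(\s-\tau)/2}$, the cleanest route is: interpolate, or more directly invoke \eqref{reg2} with $A_p^\alpha$ for a suitable $\alpha$ combined with the fact that, on the subspace $W^{\s,p}_0 = D(A_p^{\s/2})$ (up to equivalence of norms), $A_p^{(\s-\tau)/2}$ maps $W^{\s,p}_0$ into $W^{\tau,p}_0$ boundedly. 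Then $\|(\e^{-(t-s)A_p}-I)g\|_{\tau,p}\le C\|(\e^{-(t-s)A_p}-I)A_p^{(\s-\tau)/2}g\|_{p}\cdot(\text{const})$ is not quite the right grouping; instead apply \eqref{reg2} in the form $\|(\e^{-hA_p}-I)w\|_p\le C h^{(\s-\tau)/2}\|A_p^{(\s-\tau)/2}w\|_p$ with $w=(\p_0-1)$ after first noting $\|u\|_{\tau,p}\simeq \|A_p^{\tau/2}u\|_p$ on the zero-mean subspace, so with $u=(\e^{-hA_p}-I)(\p_0-1)$ we get $\|u\|_{\tau,p}\simeq\|(\e^{-hA_p}-I)A_p^{\tau/2}(\p_0-1)\|_p\le Ch^{(\s-\tau)/2}\|A_p^{\tau/2}A_p^{(\s-\tau)/2}(\p_0-1)\|_p = Ch^{(\s-\tau)/2}\|A_p^{\s/2}(\p_0-1)\|_p\le Ch^{(\s-\tau)/2}\|\p_0\|_{\s,p}$, using that $A_p^{\tau/2}$ commutes with the semigroup and that $\s-\tau\le 2$ so \eqref{reg2} applies (the case $\s-\tau=2$, i.e. $\tau=0,\s=2$, being the endpoint). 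Here $h=t-s$. This gives the claimed Hölder bound; continuity at $s=t$ is immediate from $h\to 0$, and on the compact $[0,T]$ global Hölder continuity follows from this uniform estimate.

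I expect the main (really the only) obstacle to be the functional-calculus bookkeeping around fractional powers: identifying $W^{\tau,p}_0(\T^n)$ with $D(A_p^{\tau/2})$ with equivalent norms, and justifying that $A_p^{\tau/2}$ and $A_p^{(\s-\tau)/2}$ compose to $A_p^{\s/2}$ and commute with $\e^{-hA_p}$. These are standard facts for the (shifted) Laplacian on the torus with periodic boundary conditions — the operator $A_p$ has compact resolvent, its fractional powers are well defined by the analyticity in Lemma~\ref{spa}, and the characterization $D(A_p^{\theta})=W^{2\theta,p}_0$ is classical; one should cite this (e.g. via the references already in the paper for semigroup theory) rather than reprove it. A minor point to handle cleanly is the endpoint $\s-\tau=2$: there \eqref{reg2} is stated for $\alpha\in[0,1]$ so it covers $\alpha=1$, and no separate argument is needed. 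Everything else is the routine triangle-inequality split and the constant-in-time cancellation of the $1$.
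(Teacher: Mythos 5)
Your proposal is correct and follows essentially the same route as the paper: reduce to $\p_0-1\in L^p_0$, commute $A_p^{\tau/2}$ with the semigroup, apply \eqref{reg2} with $\alpha=(\s-\tau)/2$ to get the factor $h^{(\s-\tau)/2}\|A_p^{\s/2}(\p_0-1)\|_p$, and handle general $s>0$ via the factorization $(\e^{-hA_p}-I)\e^{-sA_p}$ together with the contraction property. The only cosmetic difference is that the paper treats $t=0$ first and then propagates to $t>0$, whereas you fold both cases into one estimate.
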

\begin{proof}
For $t=0$ and $h>0$, then by \eqref{reg2}\\

$\begin{aligned}
\left\|(\e^{-h A_p }-I)(\p_0-1)\right\|_{\tau,p}&=\left\| A_p ^{\tau/2}(\e^{-h A_p }-I)(\p_0-1)\right\|_p\\
&=\left\|(\e^{-h A_p }-I) A_p ^{\tau/2}(\p_0-1)\right\|_p\\
&\leq C_{\tau,\sigma}h^{\frac{\s-\tau}{2}}\left\| A_p ^{\frac{\sigma-\tau}{2}} A_p ^{\tau/2}(\p_0-1)\right\|_p\\
&=C_{\tau,\sigma}h^{\frac{\sigma-\tau}{2}}\|(\p_0-1)\|_{\s,p}.
\end{aligned}$\\

Now for $t>0$, using the previous case and the fact that $\e^{-t A_p }$ is a contraction semigroup, one gets\\

$\begin{aligned}
\left\|(\e^{-(t+h) A_p }-\e^{-t A_p })(\p_0-1)\right\|_{\tau,p}&=\left\|(\e^{-h A_p }-I)\e^{-t A_p }(\p_0-1)\right\|_{\tau,p}\\
&\leq C_{\tau,\sigma}h^{\frac{\sigma-\tau}{2}}\|\e^{-t A_p }(\p_0-1)\|_{\s,p}\\
&=C_{\tau,\sigma}h^{\frac{\sigma-\tau}{2}}\|\e^{-t A_p } A_p ^{\frac{\s}{2}}(\p_0-1)\|_{p}\\
&\leq C_{\tau,\sigma}h^{\frac{\sigma-\tau}{2}}\| A_p ^{\frac{\s}{2}}(\p_0-1)\|_{p}\\
&\leq C_{\tau,\sigma}h^{\frac{\sigma-\tau}{2}}\|(\p_0-1)\|_{\s,p}.
\end{aligned}$\\

\noindent Then $\mathrm{e}^{-t A_p }(\p _0-1)$ (and therefore $\mathrm{e}^{\beta^{-1}t \Delta}\p _0$) belongs to $ C^{\frac{\sigma-\tau}{2}}_{\text{loc}}([0,T_{\text{max}}),W^{\tau,p}(\mathbb{T}^n))$.\\

\end{proof}

\begin{lemma}\label{holfp}
Let us assume $[\mathcal{H}_1]$ and $[\mathcal{H}_2]$. Suppose that $\p :[0,T_{\text{max}})\rightarrow \D^{\s,p}(\mathbb{T}^n)$ is the maximal mild solution of \eqref{fpp}. Then 
\begin{enumerate}
\item  $\forall \varepsilon \in [0,1-\frac{\s}{2}) $, $\forall \tau \in [0,\s)$,\begin{equation}\label{plip}
\p\in C^\mu_{\text{loc}}([0,T_{\text{max}}),\D^{\tau,p}(\T^n)),\text{ where }\mu:=\min\lp 1-\frac{\s}{2}-\varepsilon,\frac{\s-\tau}{2} \rp.
\end{equation}

\item  \begin{equation}\label{flip}
F  (\p)\in C^{\nu^-}_{\text{loc}}([0,T_{\text{max}}),L^p_0(\T^n)),\text{ where }\nu:=\min\lp 1-\frac{\s}{2},\frac{\s}{2}-\frac{1}{2}-\frac{n}{2p} \rp.
\end{equation}
\end{enumerate}
\end{lemma}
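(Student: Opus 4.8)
\textbf{Proof plan for Lemma~\ref{holfp}.}
The plan is to derive both statements from the mild formulation \eqref{mldn}, writing $\p(t)=\e^{\beta^{-1}t\Delta}\p_0+v(t)$ and treating the two summands separately. For the first term, Lemma~\ref{lipsh2} already gives $t\mapsto\e^{\beta^{-1}t\Delta}\p_0\in C^{\frac{\s-\tau}{2}}([0,T],W^{\tau,p}(\T^n))$ for any $0\le\tau<\s$; for the Duhamel term, Lemma~\ref{lipsh1} gives $v\in C^{(1-\s/2)^-}_{\text{loc}}([0,T_{\text{max}}),\D^{\s,p}(\T^n))$, and since $W^{\s,p}\hookrightarrow W^{\tau,p}$ continuously, $v$ is also in $C^{(1-\s/2)^-}_{\text{loc}}([0,T_{\text{max}}),W^{\tau,p}(\T^n))$. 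Adding the two H\"older exponents, $\p\in C^{\mu}_{\text{loc}}([0,T_{\text{max}}),W^{\tau,p}(\T^n))$ with $\mu=\min(1-\frac{\s}{2}-\varepsilon,\frac{\s-\tau}{2})$; the fact that the values stay in $\D^{\tau,p}$ (i.e. $\op(t)>0$) is inherited from $\p(t)\in\D^{\s,p}(\T^n)$, which holds because $\p$ is the maximal mild solution, together with Remark~\ref{posdi}. This gives \eqref{plip}.

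For the second statement, I would exploit the local Lipschitz continuity of $F:\D^{\s,p}(\T^n)\to L^p_0(\T^n)$ from Lemma~\ref{lips}, but combine it with a more careful interpolation so that the exponent $\nu$ comes out right. Fix a compact subinterval $[a,b]\subset[0,T_{\text{max}})$; by \eqref{plip} the orbit $\{\p(t):t\in[a,b]\}$ is a compact subset of $\D^{\s,p}(\T^n)$, hence contained in a ball on which $F$ is Lipschitz \emph{when the source is measured in a suitable norm}. The key point is that the estimates in Lemma~\ref{lips} (through Lemmas~\ref{finft} and~\ref{majfi}) show $\|F(\p_1)-F(\p_2)\|_p\le C\|\p_1-\p_2\|_{\s,p}$, but by inspecting those estimates one sees the difference $\p_1-\p_2$ actually enters only through its $W^{1,p}$-norm in some terms; more robustly, one interpolates: for $\p_1,\p_2$ in a fixed ball of $\D^{\s,p}$, $\|F(\p_1)-F(\p_2)\|_p\le C\|\p_1-\p_2\|_{\tau_0,p}$ for any $\tau_0>1+\frac{n}{p}$, because what is genuinely needed is the embedding $W^{\tau_0,p}\hookrightarrow C^1$. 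Then choosing $\tau_0$ just above $1+\frac{n}{p}$ and applying \eqref{plip} with $\tau=\tau_0$ gives H\"older continuity of $t\mapsto\p(t)$ into $W^{\tau_0,p}$ with exponent $\min(1-\frac{\s}{2}-\varepsilon,\frac{\s-\tau_0}{2})$, and composing with the Lipschitz map $F$ yields $F(\p)\in C^{\eta}_{\text{loc}}$ with $\eta=\min(1-\frac{\s}{2}-\varepsilon,\frac{\s-\tau_0}{2})$. Letting $\tau_0\downarrow 1+\frac{n}{p}$ and $\varepsilon\downarrow 0$, $\frac{\s-\tau_0}{2}\to\frac{\s}{2}-\frac{1}{2}-\frac{n}{2p}$, which gives \eqref{flip} with $\nu=\min(1-\frac{\s}{2},\frac{\s}{2}-\frac{1}{2}-\frac{n}{2p})$ and the $\nu^-$ regularity (any exponent strictly below $\nu$).

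The main obstacle is bookkeeping the dependence of $F$ on the \emph{weakest} norm of its argument: one must verify, by rereading the chain of inequalities in Lemmas~\ref{finft}–\ref{lips}, that every occurrence of $\|\p_1-\p_2\|_{\s,p}$ can be replaced by $\|\p_1-\p_2\|_{\tau_0,p}$ for $\tau_0>1+\frac{n}{p}$, using that the nonlocal term $\pps$ and its $x_1$-derivative are controlled by $C^0$ and $C^1$ norms (hence by $W^{\tau_0,p}$ via \eqref{boun}), while $\nabla V\cdot\nabla\p+\Delta V\,\p$ needs only $\|\p\|_{1,p}\le\|\p\|_{\tau_0,p}$. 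Once this is in place, the H\"older exponent is optimized exactly as above, and one has to be slightly careful that the lower bound on $\min\op_i$ is uniform over the compact orbit so that the Lipschitz constant of $F$ on the relevant ball is finite — this follows from the compactness provided by \eqref{plip} and the strict positivity of $\op$ guaranteed by Proposition~\ref{diff} and Remark~\ref{posdi}. The constraint $\s>1+\frac{n}{p}$ in $[\mathcal{H}_2]$ is precisely what makes $\nu>0$, so the statement is non-vacuous exactly under the standing hypotheses.
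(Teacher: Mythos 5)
Your proposal is correct and follows essentially the same route as the paper: part (1) by splitting the mild formula into the semigroup term (Lemma~\ref{lipsh2}) and the Duhamel term (Lemma~\ref{lipsh1}) and taking the minimum (not the sum, as you write at one point, though your formula for $\mu$ is the correct one) of the two H\"older exponents, and part (2) by noting that $F$ is locally Lipschitz from $\D^{\tau,p}(\T^n)$ into $L^p_0(\T^n)$ for any $\tau>1+\frac{n}{p}$, using compactness of the orbit to obtain a uniform Lipschitz constant, and letting $\tau\downarrow 1+\frac{n}{p}$ and $\varepsilon\downarrow 0$ to reach the exponent $\nu^-$. The paper phrases your ``reread the chain of inequalities'' step more simply as applying Lemma~\ref{lips} with $\s$ replaced by $\tau$, which is licit since $[\mathcal{H}_2]$ only requires the regularity index to lie in $(1+\frac{n}{p},2)$; otherwise the arguments coincide.
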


\begin{proof}
For the first assertion, using the embedding $W^{\sigma,p}\hookrightarrow W^{\tau,p}$ (since $\tau<\s$), \eqref{nu} and \eqref{to}, one obtains that
$$\p \in C^{\mu}([0,T_{\text{max}}),\D^{\tau,p}(\mathbb{T}^n)),\mbox{ with } \mu:=\min\lp 1-\frac{\s}{2}-\varepsilon ,\frac{\sigma-\tau}{2}\rp .$$
For the second assertion, let $\tau\in (1+\frac{n}{p},\s) $, using again the embedding $W^{\sigma,p}\hookrightarrow W^{\tau,p}$  and applying Lemma~\ref{lips} with $\s$ replaced by $\tau\in (1+\frac{n}{p},\s)$, one gets
\begin{equation}\label{clip}
F \in  C^{1^-}_{loc}(\mathcal{D}^{\tau,p}(\T^n),L^p_0(\T^n)).
\end{equation}
The results \eqref{plip} and \eqref{clip} imply \eqref{flip}. Indeed, let $0<t\leq T<T_{\text{max}}$. For all $ t\in [0,T]$, there exists a positive real number $ \alpha(t)$ such that $F$ is Lipschitz on $B_{\tau,p}(\p(t),\alpha(t))$ since  $F \in  C^{1^-}_{loc}(\mathcal{D}^{\tau,p}(\T^n),L^p(\T^n))$. We know that $\p([0,T])=\{\p(t),\,t\in [0,T]\}$ is a compact set of $\D^{\tau,p}(\T^n)$. Then, $\exists t_1,...,t_n \in [0,T] $ and $\alpha(t_i)>0$, $i\in \{1,...,n\}$ such that $ F  $ is Lipschitz on $B_{\tau,p}(\p(t_i),\alpha(t_i))$, $\forall i\in \{1,...,n\}$ and
$$\p([0,T])= \displaystyle \bigcup_{i=1}^n\lc B_{\tau,p}\lp\p(t_i),\frac{\alpha(t_i)}{2}\rp\bigcap \p([0,T])\rc .$$
Define now $\alpha:=\displaystyle\min_{1\leq i\leq n}\frac{\alpha(t_i)}{2}$. Since $\p\in C([0,T],\D^{\tau,p}(\T^n))$, then it is uniformly continuous on $[0,T]$:
$$\exists\varepsilon>0,\text{ such that }\forall s,t\in [0,T],\,|t-s|\leq \varepsilon,\,\|\p(t)-\p(s)\|_{\tau,p}\leq \alpha. $$
Let $t,s\in [0,T]$ such that $\|\p(t)-\p(s)\|_{\tau,p}\leq \alpha$. Then $\exists i\in \{1,...,n\}$ such that $ \|\p(t)-\p(t_i)\|_{\tau,p}\leq \frac{\alpha(t_i)}{2} $. Consequently, \\
$\begin{aligned}
\|\p(s)-\p(t_i)\|_{\tau,p}&\leq \|\p(s)-\p(t)\|_{\tau,p}+\|\p(t)-\p(t_i)\|_{\tau,p}\\
&\leq \alpha +\frac{\alpha(t_i)}{2}\leq \alpha(t_i).
\end{aligned}$\\

Then, for $t,s\in [0,T]$, if $|t-s|\leq \varepsilon$, $\exists i\in \{1,...,n\}$ such that $\p(t)$ and $\p(s)$ belong to $B_{\tau,p}(\p(t_i),\alpha(t_i))$ and using the fact that $F$ is Lipschitz on $B_{\tau,p}(\p(t_i),\alpha(t_i))$ then, using \eqref{plip}
$$\| F  (\p (t))- F  (\p (s))\|_p\leq C_0\|\p(t)-\p(s)\|_{\tau,p}\leq C_1|t-s|^{\mu},$$
where $C_0$ is the Lipschitz constant on $\displaystyle\bigcup_{i=1}^n B_{\tau,p}(\p(t_i),\alpha(t_i))$. If $|t-s|>\varepsilon $, then since $ F  \in C(\D^{\tau,p}(\T^n),L^p_0(\T^n))$, then $\displaystyle C_2:=\sup_{t\in[0,T]}\| F  (\p(t))\|_p<\infty$ and
$$\| F  (\p (t))- F  (\p (s))\|_p\leq 2C_2\leq \frac{2C_2}{\varepsilon^\mu}|t-s|^\mu.$$
In conclusion, we have that
$$\forall t,s\in[0,T],\| F  (\p (t))- F  (\p (s))\|_p\leq C|t-s|^\mu,$$
where $C=\max(C_1,\frac{2C_2}{\varepsilon^\mu})$. Observe that $\displaystyle \nu:=\varlimsup_{\underset{\tau\rightarrow 1+\frac{n}{p}}{\varepsilon\rightarrow 0}}\mu$, thus $\forall \tilde{\varepsilon}>0 $, $\exists \varepsilon>0 $ and $\exists \tau>1+\frac{n}{p}$, such that $\mu\geq \nu- \tilde{\varepsilon}$. 
\end{proof}
The proofs of the two following results are similar to the proofs of Lemma~3.4 and Theorem~3.5 in \citep{paz:83}. We provide details of the proof in \ref{prv1lp} and \ref{prmilp}.

\begin{lemma}\label{v1lp}
Assume $[\mathcal{H}_2]$ and suppose that $\p :[0,T_{\text{max}})\rightarrow W^{\s,p}(\mathbb{T}^n)$ is the maximal mild solution of \eqref{fpp}. For $t\in [0,T_{\text{max}})$, define
\begin{equation}\label{v1}
v_1(t):=\displaystyle\int_0^t\e^{-(t-s) A_p }\lp  F  (\p(s))- F  (\p(t))\rp ds.
\end{equation}
Then $\forall t\in [0,T_{\text{max}})$ $v_1(t)\in D( A_p )$ and $ A_p v_1\in C^{\nu^-}_{\text{loc}}([0,T_{\text{max}}),L^p(\T^n))$, where $\nu$ is defined in \eqref{flip}.
\end{lemma}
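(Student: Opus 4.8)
The statement is the standard "smoothing of the Duhamel term built from a H\"older-continuous source" result, so I would follow the classical path used, e.g., in Pazy's Lemma~3.4. The starting point is that, by Lemma~\ref{holfp}, the map $s\mapsto F(\p(s))$ lies in $C^{\nu^-}_{\text{loc}}([0,T_{\text{max}}),L^p_0(\T^n))$; fix $0<T<T_{\text{max}}$ and a H\"older exponent $\theta<\nu$, so that $\|F(\p(s))-F(\p(t))\|_p\leq C|t-s|^{\theta}$ on $[0,T]$. The first step is to check that $v_1(t)$ is well defined as a Bochner integral: write $\e^{-(t-s)A_p}(F(\p(s))-F(\p(t)))$ and estimate its $D(A_p)$-norm. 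Using \eqref{reg1} with $\alpha=1$ we get $\|A_p\e^{-(t-s)A_p}(F(\p(s))-F(\p(t)))\|_p\leq C_1(t-s)^{-1}\e^{-\kappa(t-s)}\|F(\p(s))-F(\p(t))\|_p\leq CC_1(t-s)^{\theta-1}$, which is integrable near $s=t$. Hence $s\mapsto A_p\e^{-(t-s)A_p}(F(\p(s))-F(\p(t)))$ is integrable, and since $A_p$ is closed this shows $v_1(t)\in D(A_p)$ with $A_pv_1(t)=\int_0^t A_p\e^{-(t-s)A_p}(F(\p(s))-F(\p(t)))\,ds$.

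The second and main step is the H\"older continuity in $t$ of $w(t):=A_pv_1(t)$. Fix $0\le t<t+h\le T$ and split
\begin{equation*}
w(t+h)-w(t)=\int_t^{t+h}A_p\e^{-(t+h-s)A_p}\bigl(F(\p(s))-F(\p(t+h))\bigr)ds + I + II,
\end{equation*}
where
\begin{equation*}
I=\int_0^t A_p\bigl(\e^{-(t+h-s)A_p}-\e^{-(t-s)A_p}\bigr)\bigl(F(\p(s))-F(\p(t))\bigr)ds,
\end{equation*}
\begin{equation*}
II=\int_0^t A_p\e^{-(t+h-s)A_p}\bigl(F(\p(t))-F(\p(t+h))\bigr)ds.
\end{equation*}
The first term is bounded by $C\int_t^{t+h}(t+h-s)^{\theta-1}ds=Ch^{\theta}$. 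For $II$, $\|F(\p(t))-F(\p(t+h))\|_p\le Ch^{\theta}$ and $\int_0^t A_p\e^{-(t+h-s)A_p}ds\,\cdot$ is uniformly bounded in $\mathcal L(L^p_0)$ by \eqref{int2} (it equals $\e^{-hA_p}-\e^{-(t+h)A_p}$ applied to the fixed vector), giving $\le Ch^{\theta}$. For $I$, I would use the analyticity estimate for the difference of semigroups: $\|A_p(\e^{-(t+h-s)A_p}-\e^{-(t-s)A_p})\|_{\mathcal L(L^p_0)}\le C h (t-s)^{-2}$ for $0\le s<t$ (obtained by writing the difference as $\int_{t-s}^{t+h-s}A_p^2\e^{-\tau A_p}d\tau$ and using \eqref{reg1} with $\alpha=2$), combined with $\|F(\p(s))-F(\p(t))\|_p\le C(t-s)^{\theta}$, so the integrand is $\le C h (t-s)^{\theta-2}$. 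Integrating over $s\in(0,t)$ this is only finite if handled carefully near $s=t$: split $\int_0^{t-h}+\int_{t-h}^t$; on the second piece bound $\|A_p(\e^{-(t+h-s)A_p}-\e^{-(t-s)A_p})\|\le 2\|A_p\e^{-(t-s)A_p}\|\le C(t-s)^{-1}$ and multiply by $(t-s)^{\theta}$ to get $\int_{t-h}^t C(t-s)^{\theta-1}ds=Ch^\theta$; on the first piece use $Ch(t-s)^{\theta-2}$ and $\int_0^{t-h}(t-s)^{\theta-2}ds\le Ch^{\theta-1}$, so the product is $Ch^\theta$. Altogether $\|w(t+h)-w(t)\|_p\le Ch^{\theta}$, i.e. $A_pv_1\in C^{\theta}([0,T],L^p(\T^n))$ for every $\theta<\nu$, which is the claim $A_pv_1\in C^{\nu^-}_{\text{loc}}([0,T_{\text{max}}),L^p(\T^n))$.

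The main obstacle is precisely the estimate of term $I$: the naive bound $Ch(t-s)^{\theta-2}$ is not integrable down to $s=t$ (since $\theta<1$), so one must interpolate between this analyticity bound, valid away from the singularity, and the cruder bound $C(t-s)^{\theta-1}$ near $s=t$, splitting the integral at $s=t-h$ as above. This is the standard but delicate point in such arguments; everything else (well-posedness of the Bochner integral, closedness of $A_p$, the terms away from the diagonal) is routine given Lemma~\ref{spa}, Lemma~\ref{spa2} and Lemma~\ref{holfp}. Finally, since all constants depend only on $T$ and $\theta<\nu$, and $T<T_{\text{max}}$ was arbitrary, the local H\"older regularity on $[0,T_{\text{max}})$ follows.
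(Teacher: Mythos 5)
Your proof is correct and follows essentially the same route as the paper's: the identical three-term decomposition of $A_pv_1(t+h)-A_pv_1(t)$ (your first term, $I$ and $II$ are the paper's $I_3$, $I_1$ and $I_2$), the same semigroup estimates from Lemma~\ref{spa} and Lemma~\ref{spa2}, and closedness of $A_p$ to get $v_1(t)\in D(A_p)$. The only cosmetic differences are that the paper obtains membership in $D(A_p)$ via an $\varepsilon$-truncation of the integral rather than Hille's theorem for Bochner integrals, and bounds the singular integral in your term $I$ through the exact change-of-variables estimate $\int_0^t\frac{ds}{(t-s+h)(t-s)^{1-\nu^-}}\leq Ch^{\nu^--1}$ instead of splitting at $s=t-h$; both devices yield the same $h^{\nu^-}$ rate.
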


\begin{theorem}\label{milp}
Assume $[\mathcal{H}_1]$ and $[\mathcal{H}_2]$. Suppose that $\p :[0,T_{\text{max}})\rightarrow W^{\s,p}(\mathbb{T}^n)$ is the maximal mild solution of \eqref{fpp}, then 
\begin{itemize}
\item [$(i)$] $\Delta\p\in C^{\nu^-}_{\text{loc}}((0,T_{\text{max}}),L^p(\T^n))$ and $\dot{\p}\in C^{\nu^-}_{\text{loc}}((0,T_{\text{max}}),L^p(\T^n))$, where $\nu^-$ is defined in \eqref{flip};\\
\item [$(ii)$] If in addition $\p_0\in W^{2,p}(\T^n)$ then $\Delta \p$ and  $\dot{\p}$ are continuous on $[0,T_{\text{max}})$ with values in $L^p(\T^n)$. 
\end{itemize}
\end{theorem}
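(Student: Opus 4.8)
The plan is to upgrade the maximal mild solution of Theorem~\ref{mldexs} to an $L^p$-solution, following the classical bootstrap of~\citep{paz:83}: decompose the Duhamel term, show each piece lies in $D(A_p)$, apply $A_p$, and differentiate the integral equation. I work with the formulation~\eqref{mld2}, writing, for $t\in[0,T_{\text{max}})$,
$$\p(t)-1=\e^{-tA_p}(\p_0-1)+v_1(t)+v_2(t),\qquad v_2(t):=\lp\int_0^t\e^{-\tau A_p}\,d\tau\rp F(\p(t)),$$
with $v_1$ as in~\eqref{v1}, so that $v_1+v_2=v$ is the function of~\eqref{nu}.

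First I would check that all three terms lie in $D(A_p)$ for $t\in(0,T_{\text{max}})$ — whence $\p(t)\in W^{2,p}(\T^n)\cap\D^{\s,p}(\T^n)$ — and estimate their contributions to $A_p(\p(t)-1)$. For the first term, since $\p_0-1\in D(A_p^{\s/2})$, the increment $A_p(\e^{-tA_p}-\e^{-t'A_p})(\p_0-1)$, for $0<t'<t$, equals $(\e^{-(t-t')A_p}-I)A_p^{1-\s/2}\e^{-t'A_p}A_p^{\s/2}(\p_0-1)$, and combining~\eqref{reg1} and~\eqref{reg2} (legitimate since $0\le\nu+1-\s/2$ and $\nu<1$ under $[\mathcal{H}_2]$) shows that $t\mapsto A_p\e^{-tA_p}(\p_0-1)$ is $\nu$-Hölder on each $[\delta,T]\subset(0,T_{\text{max}})$, with constant $\leq C\,\delta^{-(\nu+1-\s/2)}\|\p_0-1\|_{\s,p}$. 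For $v_1$, Lemma~\ref{v1lp} gives directly $v_1(t)\in D(A_p)$ and $A_pv_1\in C^{\nu^-}_{\text{loc}}([0,T_{\text{max}}),L^p(\T^n))$. For $v_2$, Lemma~\ref{spa2} (eq.~\eqref{int2}) gives $v_2(t)\in D(A_p)$ with $A_pv_2(t)=(I-\e^{-tA_p})F(\p(t))$, and splitting its increment into $[F(\p(t))-F(\p(t'))]$, $-\e^{-tA_p}[F(\p(t))-F(\p(t'))]$ and $-(\e^{-(t-t')A_p}-I)\e^{-t'A_p}F(\p(t'))$, then using the Hölder bound~\eqref{flip} for $s\mapsto F(\p(s))$, the contraction $\|\e^{-tA_p}\|\le1$, and $\|(\e^{-(t-t')A_p}-I)\e^{-t'A_p}g\|_p\leq C(t-t')^{\nu}(t')^{-\nu}\|g\|_p$ (from~\eqref{reg1}--\eqref{reg2}), yields $A_pv_2\in C^{\nu^-}_{\text{loc}}((0,T_{\text{max}}),L^p(\T^n))$. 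Summing, $\Delta\p=-\beta A_p(\p-1)\in C^{\nu^-}_{\text{loc}}((0,T_{\text{max}}),L^p(\T^n))$.

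Next I would obtain differentiability. For fixed $t_0\in(0,T_{\text{max}})$, the semigroup property in~\eqref{mld2} gives, for $t\ge t_0$, $\p(t)-1=\e^{-(t-t_0)A_p}(\p(t_0)-1)+\int_{t_0}^t\e^{-(t-s)A_p}F(\p(s))\,ds$, with $\p(t_0)-1\in D(A_p)$ (previous step) and $s\mapsto F(\p(s))$ Hölder on $[t_0,T]$ by~\eqref{flip}. The classical-solution theory for the inhomogeneous linear Cauchy problem with Hölder forcing and datum in $D(A_p)$ (see~\citep{paz:83}; equivalently, differentiate the Duhamel formula directly, using $\p(t)-1\in D(A_p)$ and the continuity of $s\mapsto F(\p(s))$) then yields $\p-1\in C^1((t_0,T_{\text{max}}),L^p(\T^n))\cap C((t_0,T_{\text{max}}),D(A_p))$ with $\frac{d}{dt}(\p-1)=-A_p(\p-1)+F(\p)$. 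Since $t_0\in(0,T_{\text{max}})$ is arbitrary, $\dot\p=\beta^{-1}\Delta\p+F(\p)$ on $(0,T_{\text{max}})$; as the right-hand side is in $C^{\nu^-}_{\text{loc}}((0,T_{\text{max}}),L^p(\T^n))$ (previous step and~\eqref{flip}), so is $\dot\p$, which proves~(i).

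Finally, for~(ii), if $\p_0\in W^{2,p}(\T^n)$, i.e.\ $\p_0-1\in D(A_p)$, then $A_p\e^{-tA_p}(\p_0-1)=\e^{-tA_p}A_p(\p_0-1)$ is continuous on $[0,T_{\text{max}})$ by strong continuity of the semigroup, $A_pv_1$ is continuous up to $0$ by Lemma~\ref{v1lp}, and $A_pv_2(t)=(I-\e^{-tA_p})F(\p(t))\to0$ as $t\to0^+$ because $\e^{-tA_p}F(\p(t))\to F(\p_0)$ (split once more into $\e^{-tA_p}(F(\p(t))-F(\p_0))$ and $(\e^{-tA_p}-I)F(\p_0)$). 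Hence $\Delta\p$, and so $\dot\p=\beta^{-1}\Delta\p+F(\p)$, extend continuously to $[0,T_{\text{max}})$, the right derivative at $t=0$ being $\beta^{-1}\Delta\p_0+F(\p_0)$. The step I expect to be the main obstacle is the second paragraph: pinning down the exponent $\nu^-$ and verifying that the Hölder constants blow up only as $\delta\to0$, so that the regularity is genuinely local on the open interval $(0,T_{\text{max}})$ and not up to $t=0$; this forces careful use of~\eqref{reg1}--\eqref{reg2} and of the restriction $\s\in(1+n/p,2)$ in $[\mathcal{H}_2]$ (which guarantees $\nu>0$ and $\nu+1-\s/2\ge0$). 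Everything else reduces to Lemmas~\ref{v1lp} and~\ref{holfp}.
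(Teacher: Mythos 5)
Your proposal is correct and follows essentially the same route as the paper's proof in \ref{prmilp}: the same decomposition $\p(t)-1=\e^{-tA_p}(\p_0-1)+v_1(t)+v_2(t)$, Lemma~\ref{v1lp} for $A_pv_1$, the identity $A_pv_2(t)=-(\e^{-tA_p}-I)F(\p(t))$ from \eqref{int2}, and semigroup estimates \eqref{reg1}--\eqref{reg2} for the remaining increments (the paper uses \eqref{ap} and \eqref{dex} where you use fractional powers directly, which is an immaterial difference). If anything you are slightly more explicit than the paper on the passage from H\"older continuity of $A_p(\p-1)$ to the existence of $\dot\p$, by restarting the evolution at $t_0>0$ and invoking the classical theory for linear Cauchy problems with H\"older forcing.
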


\subsection{Proof of Theorem~\ref{loc}}\label{prth}

We are now in the position to prove Theorem~\ref{loc}. The local existence of the maximal $L^p-$solution is a consequence of Theorem \ref{mldexs} and Theorem \ref{milp}$(i)$. In the following we prove the first item $(i)$ of Theorem~\ref{loc}.

\medskip

For all $t\in(0,T_{\text{max}})$, we have $\p(t)-1\in W^{2,p}(\T^n)\cap L^p_0(\T^n)$ and $\frac{d}{dt}(\p-1)(t) \in L^p_0(\T^n)$, then we have in $L^p_0$
$$ A_p (\p(t)-1)= F  (\p(t))-\frac{d}{dt}(\p(t)-1).$$
Since $ A_p ^{-1}\in \mathcal{L}(L^p_0(\T^n),W^{2,p}(\T^n)\cap L^p_0(\T^n))$, $ F  (\p(\cdot))\in C([0,T_{\text{max}}),L^p_0(\T^n))$ and $\frac{d}{dt}(\p(t)-1)\in C((0,T_{\text{max}}),L^p_0(\T^n))$ then
$$\p(t)-1= A_p ^{-1} F  (\p(t))- A_p ^{-1}\frac{d}{dt}(\p(t)-1)$$ 
is continuous from $(0,T_{\text{max}})$ into $D( A_p )=W^{2,p}(\T^n)\cap L^p_0(\T^n)$. The rest of $(i)$ follows from Lemma~\ref{holfp}. 
\medskip

The part $(ii)$ of Theorem~\ref{loc} follows from Theorem~\ref{milp}$(i)$. We are now in position to prove $(iii)$. 

\medskip

It follows from Lemma~\ref{holfp} that, $\forall \varepsilon  \in [0,1-\s/2) $, $\forall \tau \in [0,\s)$, $\p\in C^\mu([0,T],$ $W^{\tau,p}(\T^n))$, where $\mu:=\min\lp 1-\frac{\s}{2}-\varepsilon,\frac{\s-\tau}{2} \rp $. Let $\tilde{\varepsilon}=\min\lp\frac{1}{3}(\s-1-\frac{n}{p}),\frac{2-\s}{2} \rp>0 $ and let $\tau=1+\frac{n}{p}+\rho$, for all $0<\rho<\tilde{\varepsilon}$. Now, by \eqref{boun}, we have $W^{\tau,p}(\T^n)\hookrightarrow C^{1+\rho^-}(\T^n)$. To prove $(iii)$, it is sufficient to prove that $\mu\geq\rho $, which holds since:
 \begin{enumerate}
 \item  $\frac{\s-\tau}{2}>\rho $. This is equivalent to $\s-1-\frac{n}{p}>3\rho$, which is true since $\rho<\tilde{\varepsilon}<\frac{1}{3}(\s-1-\frac{n}{p})$.
 \item  $1-\frac{\s}{2}>\rho$, which is true by the definition of $\tilde{\varepsilon}$.
 \end{enumerate}
  Thus $\p\in C^\rho([0,T],C^{1+\rho^-}(\T^n))$. Now, we look for the largest value of $\rho$ and thus of $\tilde{\varepsilon}$. Let us first optimize $\s\in(1+\frac{n}{p},2)$. In view of the definition of $\tilde{\varepsilon}$, the best $\s$ (denoted $\s_{\text{opt}}$ in the following), is such that
 $$\displaystyle \frac{1}{3}\lp\s_{\text{opt}}-1-\frac{n}{p}\rp =\frac{2-\s_{\text{opt}}}{2} \Longleftrightarrow \s_{\text{opt}}=\frac{1}{5}\lp8+\frac{2n}{p}\rp.$$
It is easy to check that $\s_{\text{opt}}\in(1+\frac{n}{p},2)$. Therefore, the optimized value of $\tilde{\varepsilon}$ (denoted $\tilde{\varepsilon}_{\text{opt}}$ in the following), satisfies:
$$\tilde{\varepsilon}_{\text{opt}}=1-\frac{ \s_{\text{opt}}}{2}=\frac{1}{5}\lp 1-\frac{n}{p}\rp. $$
Therefore, $\p\in C^{\tilde{\varepsilon}_{\text{opt}}}([0,T],C^{1+\tilde{\varepsilon}_{\text{opt}}}(\T^n))$. Finally, when $p\rightarrow +\infty $, $\tilde{\varepsilon}_{\text{opt}}\rightarrow\frac{1}{5}$, which implies that if $\p_0\in\D^{\frac{8}{5},\infty}(\T^n)$, then $\p\in C^\rho([0,T],C^{1+\rho}(\T^n))$ for all $\rho\in (0,\frac{1}{5})$.  
\medskip

It remains to prove $(iv)$. To get the $C^1((0,T_{\text{max}}),C^2(\T^n))$ regularity, we consider the parabolic problem as a linear problem with H\"older-continuous right-hand side. Indeed, we have that 
$$\dot{\p} -\beta^{-1}\Delta \p= F  (\p)\qquad \mbox{in}\,(0,T_{\text{max}}).$$
Fix $0<\delta<T_{\text{max}}$ and define the following cut-off function $\kappa\in C^{\infty}(\R )$ such that
\begin{equation*}
\kappa(t)=\left\{
\begin{aligned}
0&\mbox{ for }\,t\leq \delta/2,\\
1&\mbox{ for }\,t> \delta.
\end{aligned}
\right.
\end{equation*}
Let $v(t):=\kappa(t)\p(t)$. Thus, one obtains the following linear parabolic problem 
\begin{equation}\label{lin}
\left\{
\begin{aligned}
\dot{v} -\beta^{-1}\Delta v&=\tilde{f} \mbox{ in }(0,T_{\text{max}}),\\
v(0)&=0,
\end{aligned}\right.
\end{equation}
where 
\begin{equation}\label{tf}
\tilde{f}(t):= \kappa(t) F  (\p(t))+\dot{\kappa}(t)\p(t).
\end{equation}
 In the following, we prove that there exists $\alpha\in (0,1)$ such that $\tilde{f}\in C^{\alpha/2}((0,T_{\text{max}}),$ $ C^{\alpha}(\T^n))$. The last assertion is satisfied as soon as we prove that $ \tilde{f}\in C^{\gamma}((0,T_{\text{max}}),$ $C^{1+\gamma}(\T^n))$, for some $\gamma\in (0,1)$ by taking $\alpha:=2\gamma$ (since $1+\gamma>2\gamma=\alpha$).

\medskip

We know that $\p\in C^{\rho^-}([0,T_{\text{max}}),C^{1+\rho^-}(\T^n))$. Now, by a bootstrap argument, we prove $F(\p)\in C^{\rho^-}([0,T_{\text{max}}),C^{1+\rho^-}(\T^n))$. Recall that 
\begin{equation}\label{a0}
\p\in C((0,T_{\text{max}}),W^{2,p}(\T^n))\cap C^1((0,T_{\text{max}}),L^p(\T^n)).
\end{equation}
We know that $ F  (\p)\in C([0,T_{\text{max}}),L^p(\T^n))$ and
$$ F  (\p)=\Delta V\p+\nabla V\cdot\nabla\p-\frac{\int \pr_1 V\p}{\int \p}\pr_1\p-\frac{\int \pr_{11}^2 V\p}{\int \p}\p-\frac{\int \pr_1 V\pr_1\p}{\int \p}\p+\frac{\int \pr_1 V\p}{\int \p}\frac{\int \pr_1 \p}{\int \p}\p. $$
Then by \eqref{a0}, it is easy to show that
\begin{equation}\label{b0}
 F  (\p)\in  C((0,T_{\text{max}}),W^{1,p}(\T^n)).
\end{equation}
Since $\dot{\p}-\beta^{-1}\Delta \p= F  (\p)$, then by standard $L^p$ regularity for the heat kernel
\begin{equation}\label{a1}
\p\in C((0,T_{\text{max}}),W^{3,p}(\T^n))\cap C^1((0,T_{\text{max}}),W^{1,p}(\T^n))).
\end{equation}
Differentiating $ F  (\p)$ in space, one obtains:
$$\pr_{i} F  (\p)=\pr_{i}(\nabla V . \nabla \p) + \pr_{i}(\Delta V \p) -\pr_{i1}\pps\p-\pr_{1}\pps\pr_{i}\p-\pr_{i}\pps\pr_{1}\p-\pps\pr_{i1}\p.$$
When $i\neq 1$, $\pr_{i}\pps=\pr_{i1}\pps=0$. When $i=1$,
$$\begin{aligned}\label{pr11}
\pr_{11}\pps&=\frac{\int \pr_{111}^3V\p}{\int \p}+2\frac{\int \pr_{11}^2V\pr_{1}\p}{\int \p}-2\frac{\int \pr_{11}^2V\p}{\int \p}\frac{\int \pr_{1}\p}{\int \p}\notag\\
&\quad +\frac{\int \pr_1 V\pr_{11}^2\p}{\int \p}-2\frac{\int \pr_1 V\pr_{1}\p}{\int \p}\frac{\int \pr_{1}\p}{\int \p}+2\frac{\int \pr_1 V\p}{\int \p}\lp \frac{\int \pr_{1}\p}{\int \p}\right )^2\notag\\
&\quad -\frac{\int \pr_1 V\p}{\int \p}\frac{\int \pr_{11}^2\p}{\int \p}.
\end{aligned}$$
Since there exists $\alpha >0$ such that $\frac{1}{\min \op}<\frac{1}{\alpha}$, then one can easily prove
\begin{equation}\label{b1}
 F  (\p)\in C ((0,T_{\text{max}}),W^{2,p}(\T^n)).
\end{equation}
Similarly since $\dot{\p}-\beta^{-1}\Delta \p= F  (\p)$, we have:
\begin{equation}\label{a2}
\p\in C((0,T_{\text{max}}),W^{4,p}(\T^n))\cap C^1((0,T_{\text{max}}),W^{2,p}(\T^n))).
\end{equation}
By iterating the arguments one more time, the following regularity result is satisfied
\begin{equation}\label{a3}
\p\in C((0,T_{\text{max}}),W^{5,p}(\T^n))\cap C^1((0,T_{\text{max}}),W^{3,p}(\T^n))).
\end{equation}
Differentiating $ F(\p)$ in time, one then obtains:
 $$\pr_{t} F  (\p)=\nabla V . \nabla (\pr_{t}\p) + \Delta V (\pr_{t}\p) -\pr_{t}\pr_{1}\pps\p-\pr_{1}\pps\pr_{t}\p-\pr_{t}\pps\pr_{1}\p-\pps\pr_{1}\pr_{t}\p.$$
And it follows without difficulties that
\begin{equation}\label{b2}
 F  (\p)\in C^1 ((0,T_{\text{max}}),W^{2,p}(\T^n)).
\end{equation}
Then $\exists \rho' \in (0,1)$ such that we have the embedding $W^{2,p}(\T^n)\hookrightarrow C^{1+\rho'}(\T^n) $. Therefore, $\exists \rho '\in (0,1)$ such that
\begin{equation}
 F  (\p)\in C^{\rho'}((0,T_{\text{max}}),C^{1+\rho'}(\T^n)).
\end{equation}
Taking $\gamma=\min(\rho,\rho') $, one obtains that $ \tilde{f}\in C^{\gamma}((0,T_{\text{max}}),C^{1+\gamma}(\T^n))$, defined by \eqref{tf}. Finally, one can now use Theorem 48.2(ii) in \citep{qui:07}, to show that there exists a unique classical solution $w$ of \eqref{lin} satisfying $w \in C^{1}( (0,T_{\text{max}}),C^2(\T^n))$. By uniqueness of solutions of \eqref{lin} we have that $w=v$, therefore $\p$ is a classical solution for $t>0$.$\square$ 

\begin{remark}(Another method to deal with the nonlocal term)\label{secd}\\
We present a second way to handle the nonlocal term in the proof of local-in-time existence and uniqueness result. The initial problem \eqref{fpp} can be written as
\begin{equation}\label{scdmtd}
\pr_t(\p-1)-A_p(\p-1)=H(t,\p),
\end{equation}where 
$$H(t,\p):={\rm div}(\nabla V \p)-\pr_1\lp\frac{\displaystyle\int_{\T^{n-1}} \pr_1 V\p dx_2...dx_n}{\tp(t,x_1) } \rp$$
with $\tp $ satisfying the following diffusion equation:\\
\begin{equation}
\left\{
\begin{aligned}
\pr_t\tp &=\pr_{x_1x_1}^2\tp  \mbox{ in }\T\times [0,\infty) ,\\
\tp (0,.)&=\displaystyle\int_{\T^{n-1}} \p(0,x)dx_2...dx_n  \mbox{ on }\, \T.\\
\end{aligned}
\right.
\end{equation}
Notice that,  if $\op _0>0$, then for all $t>0$ , $\tp (t)>0$. One can prove that $H:(0,T_{\text{max}})\times W^{\s,p}(\T^n) \rightarrow L^p(\T^n)$ is locally Lipschitz continuous, which implies the local existence of a solution to \eqref{scdmtd}. It is then easy to show that $\tp (t,x_1)=\displaystyle\int_{\T^{n-1}}\p (t,x) dx_2...dx_n$. Indeed, by integrating the equation \eqref{scdmtd} which writes:
$$\pr_t \p=\beta^{-1}\pr_{x_1x_1} \p+{\rm div} (\nabla V\cdot \p)-\pr_1\lp\frac{\displaystyle\int_{\T^{n-1}} \pr_1 V\p }{\tp } \p\rp,$$
one obtains,
$$\pr_t\op=\beta^{-1}\pr_{x_1x_1}\op+\prx \lp\int_{\T^{n-1}} \pr_1 V \p \rp-\prx \lp\frac{\int_{\T^{n-1}} \pr_1 V \p }{\tp }\op \rp,$$
where $\op(t,x_1)=\displaystyle\int \p dx_2...dx_n $. Denote $f(t,x_1)=\int_{\T^{n-1}} \pr_1 V \p dx_2...dx_n$, we obtain the equation:
\begin{equation}\label{hf}
\pr_t \op(x_1)=\beta^{-1}\pr_{x_1x_1}\op(x_1)+\prx f(t,x_1)-\prx \lp\frac{f(t,x_1)}{\tp }\op\rp.
\end{equation}
It is easy to see that $\tp $ is a solution of \eqref{hf} and $\tp(0,x_1)=\op(0,x_1)$. Since \eqref{hf} admits a unique solution, then $\tp =\op$.
\end{remark}

\section{ Diffusion equation and weak maximum principle}\label{max}
In this Section, we first prove that $\op $ satisfies a simple diffusion equation and we show that the solution of \eqref{fpp} is positive.

\noindent\textbf{Proof of Proposition~\ref{diff}.}\\
 Let $g:\T \rightarrow\R $ be a function in $H^1(\T),$ and let $\p$ is a $L^p-$solution of \eqref{fpp}, then we have:
 $$\begin{aligned}
\frac{d}{dt}\displaystyle\int_{\T}\op (t,x_1)g(x_1)dx_1&=\frac{d}{dt}\displaystyle\int_{\T^n}\p(x)g(x_1)dx\\
&=\displaystyle\int_{\T^n}\big[{\rm div}(\nabla V\p+\beta^{-1}\nabla\p)g(x_1)-\prx (\pps \p)g(x_1)\big ] dx\\
&=\displaystyle\int_{\T^n}\big[-(\prx  V\p+\beta^{-1}\prx \p)g'(x_1)+\pps \p g'(x_1)\big ] dx\\
&=-\beta^{-1}\displaystyle\int_{\T^n}\prx \p(x)g'(x_1)dx\\
&=-\beta^{-1}\displaystyle\int_{\T}\prx \op (t,x_1)g'(x_1)dx_1\\
&=\beta^{-1}\displaystyle\int_{\T}\pr_{x_1x_1}^2\op (x)g(x_1)dx_1,
\end{aligned}$$
 which is a weak formulation in the distribution sense of :
$$\pr_t\op =\beta^{-1}\pr_{x_1x_1}^2\op ,\, \text{in}\,[0,\infty)\times \T.$$
Finally, using the fact that $\p$ is a density function, then $\displaystyle\int_\T\op =\displaystyle\int_{\T^n}\p=1$. $\square$

Let $\p$ be the maximum mild solution of \eqref{fp2}. We are now in position to prove the positivity of $\p$.\\

\noindent \textbf{Proof of Proposition~\ref{pos1}.}\\
Multiplying $\dot{\p}(t)=\beta^{-1}\Delta \p(t)+{\rm div}(\nabla V\p-\pps e_1\p)$ (where $e_1$ is defined in Section~\ref{not}) by the negative part of $\p$ defined as
$$\p_{-}=\frac{|\p|-\p}{2}\geq 0,$$
and integrating over $\T^n$, one gets
\begin{equation}\label{maxpr}
\displaystyle \int_{\T^n} \dot{\p}\p_{-}=\int_{\T^n} \beta^{-1}\Delta \p\p_{-}+\int_{\T^n}{\rm div}(\nabla V\p-\pps e_1\p)\p_{-}.
\end{equation}
By the definition of $\p_{-}$ and using properties about $\nabla \p_{-}$ stated in Lemma $7.6$ in \citep{gil:77}, one obtains:
$$\displaystyle \int_{\T^n} \dot{\p}\p_{-}=-\frac{1}{2}\frac{d}{dt}\int_{\T^n} \p_{-}^2$$
 and
$$\displaystyle \int_{\T^n} \beta^{-1}\Delta \p\p_{-}=\beta^{-1} \int_{\T^n} |\nabla \p_{-}|^2.$$
We now restrict ourselves to $[0, T]$, for a fixed $T<T_{\text{max}}$. Referring to Lemma~\ref{finft} and applying Young's inequality,\\

$\begin{aligned}
\left|\int_{\T^n}{\rm div}(\nabla V\p-\pps e_1\p)\p_{-}\right|&=\left|\int_{\T^n}(\nabla V\p-\pps e_1\p)\cdot\nabla\p_{-}\right|\\
&\leq 2\|\nabla V\|_\infty^2\varepsilon\int_{\T^n} |\nabla \p_{-}|^2+C_{\varepsilon}\int_{\T^n}|\p_{-}|^2,
\end{aligned}$\\

where $\varepsilon>0$ is arbitrary, but $C_{\varepsilon}$ depends on the choice of $\varepsilon$. Choosing $\varepsilon>0$  so that $-\beta^{-1}+2\|\nabla V\|_\infty^2\varepsilon \leq 0$, then \eqref{maxpr} becomes
$$\displaystyle \frac{1}{2}\frac{d}{dt}\int_{\T^n} |\p_{-}|^2 \leq C_{\varepsilon}\int_{\T^n}|\p_{-}|^2.$$
%\begin{align*}
%\displaystyle \frac{1}{2}\frac{d}{dt}\int_{\T^n} |\p_{-}|^2 \leq & -\beta^{-1} \int_{\T^n} |\nabla \p_{-}|^2+\varepsilon C\int_{\T^n} |\nabla \p_{-}|^2\\
%&\quad+C_{\varepsilon}(1+\frac{1}{\min \op}\|\p\|_{C^1})\int_{\T^n}  |\p_{-}|^2\\
%\leq & C_{\varepsilon}(1+\frac{1}{\min \op}\|\p\|_{C^1})\int_{\T^n} |\p_{-}|^2.
%\end{align*}
Having $\p \in C([0,T_1],L^2(\T^n))$ (since $2<p$) and $\p(0)_-=0$ (since $\p_0$ is supposed positive), Gronwall's lemma now implies $\int_{\T^n} |\p_{-}|^2=0$, for all $t\in [0, T]$. Since $T<T_{\text{max}}$ is arbitrary, we see that $\p(t)^{-}= 0$, for all $t\in (0, T_{\text{max}})$. 

\section{A-priori estimates for solutions and global existence}\label{priob}
In this Section, we prove some a-priori bounds for $F$ and universal a-priori bounds for $\p$, which are essential to prove global existence. We will use repeatedly the fact that $\op\geq \alpha >0 $ on $(0,T_{\text{max}})$, since $\op_0\in\D^{\s,p}(\T^n) $ and $\op$ satisfies $\pr_t\op =\beta^{-1}\pr_{x_1x_1}\op$.

\subsection{Polynomial and universal a-priori bounds}\label{pub}

We define $\MM _0(x_1)=\displaystyle\sup_{x_2,...,x_n}(\p_0\e^{\frac{\beta V}{2}})(x_1,x_2,...,x_n)$ and we consider the following parabolic problem on a function $\MM :\R_+\times \T\rightarrow \R$,
\begin{equation}\label{mps}
\left\{
\begin{aligned}
\pr_t\MM  &=\beta^{-1}\partial_{x_1x_1}\MM -\delta\MM -\prx(\pps \MM )\text{ on }[0,+\infty)\times\T,\\
\MM (0,x_1)&=\MM _0(x_1),
\end{aligned}
\right.
\end{equation}
where 
$$\pps (t,x_1)=\frac{\displaystyle \int_{\T^{n-1}} \prx  V(x)\p (t,x) dx_2...dx_n}{\op (t,x_1)},$$
$$\op (t,x_1)= \displaystyle \int_{\T^{n-1}}\p (t,x) dx_2...dx_n,$$
$$\displaystyle \delta=-\max_{x\in\T^n} \lp\frac{\Delta V}{2}-\frac{\beta}{4}|\nabla V|^2+\frac{\beta|\prx V|}{2}\|\prx V\|_\infty\rp$$
and $\p$ is the unique maximal solution defined in Section~\ref{theo}.
\medskip

In a classical way, one can prove that the problem \eqref{mps} admits a unique solution $\MM \in L^{\infty}((0,T_{\text{max}}),L^2(\T))\cap L^{2}((0,T_{\text{max}}),H^1(\T))$ and satisfies $\MM \geq 0$ since $\pps\in L^\infty((0,T_{\text{max}}),L^\infty(\T^n))$ , $V\in C^2(\T^n)$ and $\MM _0\geq 0$. This function will be used to dominate the solution $\p$ given by Theorem~\ref{loc}. In fact, we have the following lemma.
\begin{lemma}\label{mp}
Let $\MM $ be the solution of \eqref{mps}, then the solution $\p$ given by Theorem~\ref{loc} satisfies:
$$\p\leq\MM \,\e^{\frac{-\beta V}{2}}.$$
\end{lemma}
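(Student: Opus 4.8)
The strategy is a comparison-principle argument: I want to show that $w := \MM\,\e^{-\beta V/2} - \p$ is nonnegative on $(0,T_{\text{max}})\times\T^n$. The first step is to derive the PDE satisfied by $u := \p\,\e^{\beta V/2}$. Starting from $\pr_t\p = \beta^{-1}\Delta\p + {\rm div}(\nabla V\p) - \prx(\pps\p)$ and substituting $\p = u\,\e^{-\beta V/2}$, a direct computation (using $\nabla(\e^{-\beta V/2}) = -\tfrac{\beta}{2}\nabla V\,\e^{-\beta V/2}$) should produce
\begin{equation*}
\pr_t u = \beta^{-1}\Delta u + \lp \tfrac{\Delta V}{2} - \tfrac{\beta}{4}|\nabla V|^2 \rp u - \prx(\pps u) + \tfrac{\beta}{2}\,\pps\,(\prx V)\, u,
\end{equation*}
where the last term comes from moving the $\e^{\beta V/2}$ factor through the $\prx(\pps\p)$ term. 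The pointwise coefficient multiplying $u$ is then bounded above by $\tfrac{\Delta V}{2} - \tfrac{\beta}{4}|\nabla V|^2 + \tfrac{\beta|\prx V|}{2}\|\prx V\|_\infty = -\dt + (\text{nonpositive by the choice of }\dt)$; more precisely, by the definition of $\dt$ this coefficient is $\leq -\dt$ pointwise when $u \geq 0$ (and $\p$, hence $u$, is nonnegative by Proposition~\ref{pos1}). So $u$ is a subsolution of the operator governing $\MM$: $\pr_t u \leq \beta^{-1}\Delta u - \dt u - \prx(\pps u)$ in a weak sense — here I use that the extra term is $\leq 0$ because $u\geq 0$ and the $\pps$-dependent contribution is controlled using $\|\pps\|_\infty \leq \|\prx V\|_\infty$ from Lemma~\ref{finft}.

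Next I compare $u$ with $\MM$. Both solve (resp. sub-solve) $\pr_t z = \beta^{-1}\pr_{x_1x_1}z - \dt z - \prx(\pps z)$ on $[0,T_{\text{max}})$, but note $\MM$ depends only on $(t,x_1)$ whereas $u$ depends on all of $x$; the operator acting on $u$ is the full $\beta^{-1}\Delta$, but since $\MM$ is independent of $x_2,\dots,x_n$ the equation for $\MM$ is consistent with viewing $\MM$ as a function on $\T^n$ solving the same full-Laplacian equation. For the initial data, $u(0,\cdot) = \p_0\e^{\beta V/2} \leq \MM_0$ by the very definition $\MM_0(x_1) = \sup_{x_2,\dots,x_n}(\p_0\e^{\beta V/2})$. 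Then $w := \MM - u$ satisfies, weakly, $\pr_t w - \beta^{-1}\Delta w + \dt w + \prx(\pps w) \geq 0$ with $w(0,\cdot)\geq 0$. I apply the weak maximum principle (in the same spirit as the proof of Proposition~\ref{pos1}): multiply by $w_- = \tfrac{|w|-w}{2}\geq 0$, integrate over $\T^n$, use $\|\pps\|_\infty\leq\|\prx V\|_\infty$ and Young's inequality to absorb the first-order term $\int \pps e_1 w\cdot\nabla w_-$ into $\beta^{-1}\int|\nabla w_-|^2$, and obtain $\tfrac{1}{2}\tfrac{d}{dt}\int w_-^2 \leq C\int w_-^2$ on any $[0,T]$ with $T<T_{\text{max}}$; Gronwall plus $w_-(0)=0$ gives $w_-\equiv 0$, i.e. $u\leq\MM$, hence $\p = u\,\e^{-\beta V/2}\leq \MM\,\e^{-\beta V/2}$.

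The main obstacle is making the subsolution inequality for $u$ rigorous at the right level of regularity and, relatedly, justifying the weak maximum principle for $w$ when $w$ is only known to be, say, in $C([0,T],L^2)\cap L^2((0,T),H^1)$-type spaces and the coefficient $\pps$ is merely $L^\infty$ in space with limited time regularity. One must check that all the integrations by parts in the energy estimate are legitimate (using the $W^{2,p}$, $p>n$, regularity of $\p$ from Theorem~\ref{loc} and the regularity of $\MM$ from its own construction), and that the term $\prx(\pps w)$ — which is not in divergence-friendly form unless one keeps it as $\int \pps e_1 w\cdot\nabla w_-$ after integrating by parts — is handled exactly as in the proof of Proposition~\ref{pos1}. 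A secondary technical point is the sign bookkeeping in the computation of the PDE for $u$: one must verify that every term coming from commuting $\e^{\beta V/2}$ past the operators is either reproduced in the $\MM$-equation or has a favorable sign after using $u\geq 0$ and $|\pps|\leq\|\prx V\|_\infty$, which is precisely what the particular form of $\dt$ is designed to guarantee.
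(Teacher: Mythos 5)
Your proposal is correct and follows essentially the same route as the paper: pass to $u=\p\,\e^{\beta V/2}$, use the definition of $\dt$ together with $\|\pps\|_\infty\leq\|\prx V\|_\infty$ and the nonnegativity to compare $u$ with $\MM$, and close with an energy estimate on the negative part of $\MM-u$ via Young's inequality and Gronwall. The paper merely organizes the same computation slightly differently (it multiplies the $u$-equation directly by $(u-\MM)_+$ and invokes the supersolution inequality for $\MM$, using $\MM\geq 0$ where you use $u\geq 0$), so the two arguments coincide in substance.
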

\begin{proof}
The analysis will be carried out in a suitable system of coordinates which simplifies the calculations. We will perform two changes of variable. First, one may assume that $\beta=1$ up to the following change of variable: $\widehat{t}=\beta^{-1}t$, $\widehat{\p }(\widehat{t},x)=\p (t,x),$ $\widehat{V}(x)=\beta V(x)$. Second, we take:
$$\tp =\p\e^{\frac{V}{2}}. $$
Therefore, the problem \eqref{fpp} becomes
$$\begin{aligned}
\pr_t \tp&={\rm div}\lc \nabla V\tp\e^{-\frac{V}{2}} + \nabla(\tp \e^{-\frac{V}{2}}) \rc \e^{\frac{V}{2}}-\prx \lp \pps \tp \e^{-\frac{V}{2}}\rp \e^{\frac{V}{2}}\notag\\
&={\rm div}\lc \lp \frac{\nabla V}{2}\tp + \nabla\tp \rp \e^{-\frac{V}{2}}\rc \e^{\frac{V}{2}}-\prx \lp \pps \tp \rp +\frac{\prx V}{2}\pps\tp \notag\\
&= \frac{\Delta V}{2}\tp +\Delta\tp -\frac{\nabla V}{2}\cdot\frac{\nabla V}{2}\tp -\prx \lp \pps \tp \rp +\frac{\prx V}{2}\pps\tp \notag\\
&= \Delta\tp+W\tp-\prx \lp \pps \tp \rp +\frac{\prx V}{2}\pps\tp, \label{fpchg}
\end{aligned}$$
where $W=\lp \frac{\Delta V}{2}-\frac{|\nabla V|^2}{4}\rp $. Multiplying Equation \eqref{fpchg} by $(\tp-\MM )_+$ and integrating over space, one then obtains
$$\begin{aligned}
\displaystyle\int_{\T^n}\pr_t\tp(\tp-\MM )_+&=\int_{\T^n}\Delta\tp(\tp-\MM )_++\int_{\T^n}W\tp(\tp-\MM )_+\notag\\
&\quad -\int_{\T^n}\prx \lp \pps \tp \rp (\tp-\MM )_++\int_{\T^n}\frac{\prx V}{2}\pps\tp(\tp-\MM )_+.\label{fpchg2}
\end{aligned}$$

Furthermore,
$$\displaystyle\int_{\T^n}\pr_t\tp(\tp-\MM )_+=\frac{1}{2}\frac{d}{dt}\int_{\T^n}\lp(\tp-\MM )_+\rp^2+\int_{\T^n}\pr_t\MM (\tp-\MM )_+.$$
Integrating by parts, one thus obtains
$$\begin{aligned}
\displaystyle\frac{1}{2}\frac{d}{dt}\int_{\T^n}\lp(\tp-\MM )_+\rp^2&=-\int_{\T^n}\pr_t\MM (\tp-\MM )_+\\
&\quad-\int_{\T^n}\mid\nabla(\tp-\MM )_+ \mid ^2+\int_{\T^n}\partial_{x_1x_1}\MM (\tp-\MM )_+\\
&\quad +\int_{\T^n}W\lp(\tp-\MM )_+\rp^2 +\int_{\T^n}W\MM (\tp-\MM )_+\\
&\quad+\int_{\T^n} \pps (\tp-\MM ) \prx(\tp-\MM )_+-\int_{\T^n}\prx \lp \pps \MM  \rp (\tp-\MM )_+\\
&\quad +\int_{\T^n}\frac{\prx V}{2}\pps \lp(\tp-\MM )_+\rp^2+\int_{\T^n}\frac{\prx V}{2}\pps \MM (\tp-\MM )_+.
\end{aligned}$$
In addition, applying Young's inequality
$$\displaystyle\int_{\T^n} \pps (\tp-\MM ) \prx(\tp-\MM )_+\leq \frac{1}{4\varepsilon} \|\prx V\|_{\infty}^2 \int_{\T^n} \lp(\tp-\MM )_+\rp^2+\varepsilon \int_{\T^n}|\nabla (\tp-M)_+|^2.$$
Then, by Lemma~\ref{finft}
$$\begin{aligned}
&\displaystyle\frac{1}{2}\frac{d}{dt}\int_{\T^n}\lp(\tp-\MM )_+\rp^2\\
&\leq-(1-\varepsilon)\int_{\T^n}\mid\nabla(\tp-\MM )_+ \mid ^2+\lp\|W\|_{\infty}+\frac{1}{4\varepsilon} \|\prx V\|_{\infty}^2 \rp\int_{\T^n}\lp(\tp-\MM )_+\rp^2\\
&\quad+\int_{\T^n}\frac{\prx V}{2}\pps \lp(\tp-\MM )_+\rp^2\\
&\quad + \int_{\T^n}\lc  -\pr_t\MM + \partial_{x_1x_1}\MM +W \MM - \prx(\pps \MM )+\frac{\prx V}{2}\pps \MM  \rc (\tp-\MM )_+.
\end{aligned}$$
One knows that
$$-\pr_t\MM + \partial_{x_1x_1}\MM +W \MM - \prx(\pps \MM )+\frac{\prx V}{2}\pps \MM  \leq 0.$$
Indeed, since $\MM $ is the solution of \eqref{mps} (and since $\MM\geq 0$)
$$\begin{aligned}
-\pr_t\MM +\partial_{x_1x_1}\MM -\prx(\pps \MM )&=-\max_{x\in\T^n} \lp W+\frac{|\prx V|}{2}\|\prx V\|_\infty\rp\MM \\
&\quad \leq -\lp W+\frac{\prx V}{2}\pps\rp\MM .\\
\end{aligned}$$
For $\MM $ solution of \eqref{mps} and for $\varepsilon$ sufficiently small, one obtains, by Lemma~\ref{finft}, that
$$\displaystyle\frac{1}{2}\frac{d}{dt}\int_{\T^n}(\tp-\MM )_+^2\leq  \lp \|W\|_\infty+\frac{1}{4\varepsilon}\|\prx V\|_\infty^2 + \frac{\|\prx V\|_\infty^2}{2}\rp\int_{\T^n}\lp(\tp-\MM )_+\rp^2.$$
%\begin{align*}
%\displaystyle\frac{1}{2}\frac{d}{dt}\int_{\T^n}(\tp-\MM )_+^2&\leq-(1-\varepsilon)\int_{\T^n}\mid\nabla(\tp-\MM )_+ \mid ^2\\
%&\quad +\lp \|W\|_\infty+\frac{1}{2\varepsilon}\|\prx V\|_\infty^2 + \frac{\|\prx V\|_\infty^2}{2}\rp\int_{\T^n}\lp(\tp-\MM )_+\rp^2\\
%&\quad +\int_{\T^n}\lp\delta+W+\frac{\prx V}{2}\pps\rp\MM (\tp-\MM )_+\\
%&\leq \lp \|W\|_\infty+\frac{1}{2\varepsilon}\|\prx V\|_\infty^2 + \frac{\|\prx V\|_\infty^2}{2}\rp\int_{\T^n}\lp(\tp-\MM )_+\rp^2.
%\end{align*}
Then using the fact that $\MM _0\geq \tp_0$ and applying the Gronwall Lemma (since $(\tp-\MM )_+\in L^1((0,T_{\text{max}}),L^2(\T^n))$), one then obtains $\displaystyle\int_{\T^n}(\tp-\MM )_+^2=0$, which means that $(\tp-\MM )_+=0$. 
%Therefore, $\tp\leq\MM $ (which is equivalent to $\p\leq\MM \,\e^{\frac{-V}{2}}$).
\end{proof}

\begin{corollary}\label{bou2}
The solution $\p$ given by Theorem~\ref{loc} belongs to $L^2((0,T_{\text{max}}),L^{\infty}(\T^n))$.
\end{corollary}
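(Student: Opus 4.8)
The plan is to read the bound off directly from Lemma~\ref{mp}, combined with the regularity of the dominating function $\MM$. First, recall that by Proposition~\ref{pos1} and Lemma~\ref{mp} one has the two-sided pointwise bound
\begin{equation*}
0\le \p(t,x)\le \MM(t,x_1)\,\e^{-\frac{\beta V(x)}{2}}\qquad\text{for a.e. } t\in(0,T_{\text{max}}),\ x\in\T^n,
\end{equation*}
where $\MM$ is the solution of \eqref{mps}. Since $V\in C^2(\T^n)$ and $\T^n$ is compact, the weight $\e^{-\beta V/2}$ is bounded, so
\begin{equation*}
\|\p(t)\|_{\infty}\le \|\e^{-\frac{\beta V}{2}}\|_{\infty}\,\|\MM(t,\cdot)\|_{L^\infty(\T)}\qquad\text{for a.e. } t\in(0,T_{\text{max}}).
\end{equation*}

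Next, I would invoke the regularity of $\MM$ stated just before Lemma~\ref{mp}, namely $\MM\in L^{\infty}((0,T_{\text{max}}),L^2(\T))\cap L^{2}((0,T_{\text{max}}),H^1(\T))$. Because $\MM$ depends only on $t$ and the single space variable $x_1\in\T$, the one-dimensional Sobolev embedding $H^1(\T)\hookrightarrow C(\T)\hookrightarrow L^\infty(\T)$ applies, and there is a constant $C>0$ with $\|\MM(t,\cdot)\|_{L^\infty(\T)}\le C\|\MM(t,\cdot)\|_{H^1(\T)}$ for a.e.\ $t$. Hence $t\mapsto\|\MM(t,\cdot)\|_{L^\infty(\T)}$ lies in $L^2(0,T_{\text{max}})$, and combining with the previous display yields $\|\p(t)\|_{\infty}\le C'\|\MM(t,\cdot)\|_{H^1(\T)}$, so that $\p\in L^2((0,T_{\text{max}}),L^\infty(\T^n))$.

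There is essentially no obstacle here: the corollary is a direct consequence of Lemma~\ref{mp}. The only points deserving a line of care are (a) that the inequality of Lemma~\ref{mp} is stated in the original variables while its proof used the rescaling $\widehat t=\beta^{-1}t$, $\widehat V=\beta V$, $\tp=\p\,\e^{V/2}$ — undoing this changes only the time interval and the weight by constants, which does not affect $L^2$-in-time membership; and (b) that $T_{\text{max}}$ may a priori be finite at this stage (global existence being proved only later, in Theorem~\ref{glo}), but the $L^2_t H^1_x$ bound on $\MM$ holds on the whole maximal interval, so the conclusion is unaffected.
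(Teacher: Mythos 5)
Your argument is correct and is essentially identical to the paper's own proof: both deduce the bound from the domination $\p\le\MM\,\e^{-\beta V/2}$ of Lemma~\ref{mp} together with $\MM\in L^{2}((0,T_{\text{max}}),H^1(\T))$ and the one-dimensional embedding $H^1(\T)\hookrightarrow L^{\infty}(\T)$. Your explicit use of Proposition~\ref{pos1} to get the lower bound $0\le\p$ (so that the pointwise domination controls $\|\p(t)\|_\infty$ and not just the positive part) is a detail the paper leaves implicit, and is a welcome clarification.
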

\begin{proof}
The assertion follows directly from the fact that $\p\leq\MM \,\e^{\frac{-\beta V}{2}}$ (by Lemma~\ref{mp}) and from the embedding $H^1_{x_1}(\T)\hookrightarrow L^{\infty}(\T)$.
\end{proof}

\begin{proposition}
The solution $\p$ given by Theorem~\ref{loc} satisfies:
\begin{equation}\label{stp1}
\p\in L^{\infty}((0,T_{\text{max}}),L^2(\T^n)))\cap L^{2}((0,T_{\text{max}}),H^1(\T^n)))
\end{equation} 
and
\begin{equation}\label{stp2}
\p\in L^{\infty}((0,T_{\text{max}}),H^1(\T^n)))\cap L^{2}((0,T_{\text{max}}),H^2(\T^n))).
\end{equation}
\end{proposition}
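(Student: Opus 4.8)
The plan is to prove the two regularity statements \eqref{stp1} and \eqref{stp2} by standard energy estimates, exploiting the a-priori $L^2((0,T_{\text{max}}),L^\infty(\T^n))$ bound provided by Corollary~\ref{bou2}. For \eqref{stp1}, I would multiply the equation $\dot{\p}=\beta^{-1}\Delta\p+{\rm div}(\nabla V\p-\pps e_1\p)$ by $\p$ itself and integrate over $\T^n$. This gives
\[
\frac{1}{2}\frac{d}{dt}\int_{\T^n}\p^2 = -\beta^{-1}\int_{\T^n}|\nabla\p|^2 - \int_{\T^n}(\nabla V\p-\pps e_1\p)\cdot\nabla\p,
\]
and Young's inequality absorbs part of $\beta^{-1}\int|\nabla\p|^2$: the cross term is bounded by $\varepsilon\int|\nabla\p|^2 + C_\varepsilon(\|\nabla V\|_\infty^2+\|\pps\|_\infty^2)\int\p^2$, where $\|\pps\|_\infty\leq\|\prx V\|_\infty$ by Lemma~\ref{finft}. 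Choosing $\varepsilon$ small yields $\frac{d}{dt}\|\p\|_2^2 + c\|\nabla\p\|_2^2 \leq C\|\p\|_2^2$; Gronwall's lemma on the finite interval $[0,T]$ with $T<T_{\text{max}}$ then gives the $L^\infty_t L^2_x$ bound, and integrating the differential inequality in time gives the $L^2_t H^1_x$ bound. Note $\p_0\in W^{\s,p}\hookrightarrow L^2$ so the initial data is admissible.

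For \eqref{stp2} I would proceed to the next-order estimate, testing against $-\Delta\p$ (or equivalently differentiating and testing against $\nabla\p$). Multiplying by $-\Delta\p$ and integrating gives
\[
\frac{1}{2}\frac{d}{dt}\int_{\T^n}|\nabla\p|^2 + \beta^{-1}\int_{\T^n}|\Delta\p|^2 = -\int_{\T^n}{\rm div}(\nabla V\p-\pps e_1\p)\,\Delta\p.
\]
Expanding the divergence produces terms of the form $\nabla V\cdot\nabla\p\,\Delta\p$, $\Delta V\,\p\,\Delta\p$, $\pps\,\prx\p\,\Delta\p$, and $\prx\pps\,\p\,\Delta\p$. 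The first three are controlled by $\varepsilon\|\Delta\p\|_2^2 + C_\varepsilon(\|\nabla\p\|_2^2+\|\p\|_2^2)$ using the $C^2$ bound on $V$ and $\|\pps\|_\infty\leq\|\prx V\|_\infty$. The delicate term is $\prx\pps\,\p\,\Delta\p$; here I would use $\|\prx\pps\|_\infty\leq \frac{C}{\min\op}\|\p\|_{C^1}$ from \eqref{cc1}, but since we only want an $L^2$-in-time statement, it is cleaner to bound $\|\prx\pps\,\p\|_2 \leq \|\prx\pps\|_2\|\p\|_\infty$ and use the $L^2_tL^\infty_x$ control of $\p$ from Corollary~\ref{bou2} together with the formula \eqref{der} for $\prx\pps$, which bounds $\|\prx\pps\|_2$ by $\frac{C}{\alpha}(\|\p\|_{1,2}+\cdots)$ — all quantities already controlled by \eqref{stp1} up to the $L^\infty_x$ factor. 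After Young's inequality this yields $\frac{d}{dt}\|\nabla\p\|_2^2 + c\|\Delta\p\|_2^2 \leq g(t)\big(\|\nabla\p\|_2^2+\|\p\|_2^2+1\big)$ with $g\in L^1((0,T_{\text{max}}))$ thanks to the $L^2_t L^\infty_x$ bound; Gronwall on $[0,T]$ closes the estimate. Elliptic regularity ($\|\p\|_{H^2}\lesssim\|\Delta\p\|_2+\|\p\|_2$ on the torus) converts the $L^2_t\|\Delta\p\|_2$ control into $L^2_t H^2_x$.

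The main obstacle is the nonlocal term $\prx(\pps\p)$ in the $H^1$ estimate: one must be careful that the bound for $\prx\pps$ does not cost more derivatives of $\p$ than are available at this stage. The key observation making the argument work is that we are only claiming integrability in time, so it suffices to pair the dangerous factor $\|\p\|_{C^1}$ (or $\|\p\|_\infty$) against the $L^2_tL^\infty_x$ bound from Corollary~\ref{bou2} rather than trying for a uniform-in-time $C^1$ bound — the latter would be circular at this point. A small technical point to handle is that the energy identities require enough regularity of $\p$ to justify the integrations by parts; this is guaranteed on $(0,T_{\text{max}})$ by Theorem~\ref{loc}(iv), and the bounds extend to $t=0$ since $\p_0\in W^{\s,p}(\T^n)\hookrightarrow H^1(\T^n)$ by \eqref{sobo} under $[\mathcal{H}_2]$.
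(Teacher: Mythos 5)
Your proposal is correct and follows essentially the same route as the paper: an $L^2$ energy estimate (testing with $\p$, Young's inequality, Gronwall) for \eqref{stp1}, then an $H^1$ estimate (testing with $-\Delta\p$) in which the coefficient multiplying $\|\nabla\p\|_2^2$ is made integrable in time via the $L^2((0,T_{\text{max}}),L^\infty(\T^n))$ bound of Corollary~\ref{bou2} and the lower bound $\op\geq\alpha>0$, followed by Gronwall and elliptic regularity. The only cosmetic difference is that you keep the nonlocal term grouped as $\prx\pps\,\p+\pps\prx\p$ and estimate $\|\prx\pps\|_2$ via \eqref{der}, whereas the paper expands $\prx\pps$ into its three constituents and bounds each term of the form $(\|\p\|_\infty/\min\op)^2\|\nabla\p\|_2^2$ directly.
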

\begin{proof}
Recall that by Corollary~\ref{bou2}, one has that $\p\in L^2((0,T_{\text{max}}),L^{\infty}(\T^n))$ and by Remark~\ref{posdi} one has $\op\in C^\infty((0,T_{\text{max}}),C^\infty(\T^n))$.

\textbf{First step:} Multiply \eqref{fpp} by $\p$ and then integrate over space:
$$\displaystyle\frac{1}{2}\frac{d}{dt}\displaystyle\int_{\T^n} |\p|^2+\beta^{-1}\displaystyle\int_{\T^n} |\nabla \p|^2=-\displaystyle\int_{\T^n} \nabla V\p\cdot\nabla\p+\int_{\T^n}\frac{\int_{\T^{n-1}}\pr_{1}V\p}{\int_{\T^{n-1}}\p}\p\pr_{1}\p.$$
Applying Young's inequality, one obtains,
$$\left|\int_{\T^n} \nabla V\p\cdot\nabla\p\right|\leq\|\nabla V\|_{\infty}\lc \frac{\varepsilon}{2}\int_{\T^n}|\nabla\p|^2+\frac{1}{2\varepsilon}\int_{\T^n}|\p|^2\rc. $$
Using the fact that $\p\geq 0$,
$$\left|\int_{\T^n}\frac{\int_{\T^{n-1}}\pr_{1}V\p}{\int_{\T^{n-1}}\p}\p\pr_{1}\p\right|\leq \|\pr_{1}V\|_{\infty}\lc \frac{\varepsilon}{2}\int_{\T^n}|\pr_{1}\p|^2+\frac{1}{2\varepsilon}\int_{\T^n}|\p|^2\rc .$$
Thus, one obtains:
\begin{equation}\label{stp1_0}
\frac{1}{2}\frac{d}{dt}\|\p\|^2_2+(\beta^{-1}-\varepsilon\|\nabla V\|_{\infty})\|\nabla \p\|^2_2\leq \frac{1}{\varepsilon}\|\nabla V\|_{\infty}\|\p\|^2_2.
\end{equation}
Choose $\varepsilon$ such that $\beta^{-1}-\varepsilon\|\nabla V\|_{\infty}>0$. In this case:
$$\frac{1}{2}\frac{d}{dt}\|\p\|^2_2\leq \frac{1}{\varepsilon}\|\nabla V\|_{\infty}\|\p\|^2_2.$$
Using Gronwall Lemma, one gets
$$\|\p\|^2_2\leq \|\p_0\|^2_2\exp\lp \frac{1}{\varepsilon}\|\nabla V\|_{\infty} t\rp ,\quad \forall t\in [0,T_{\text{max}}).$$
Since $\p_0\in L^2(\T^n)$ and $V\in C^2(\T^n)$, then $\p\in L^{\infty}([0,T_{\text{max}}),L^2(\T^n))$. By \eqref{stp1_0}, we have that $\p\in L^{2}([0,T_{\text{max}}),H^1(\T^n))$. Therefore,
\begin{equation}
\p\in L^{\infty}([0,T_{\text{max}}),L^2(\T^n)))\cap L^{2}([0,T_{\text{max}}),H^1(\T^n))).
\end{equation} 
\textbf{Second step:} Multiply \eqref{fpp} by $-\Delta\p$ and then integrate over space:
\begin{equation}\label{steq2}
 \begin{aligned}
&\displaystyle\frac{1}{2}\frac{d}{dt}\displaystyle\int_{\T^n} |\nabla\p|^2+\beta^{-1}\displaystyle\int_{\T^n} |\Delta \p|^2\\
&\qquad \qquad\qquad=-\displaystyle\int_{\T^n}\nabla V\cdot\nabla\p\Delta\p-\int_{\T^n} \Delta V\p\Delta\p+\int_{\T^n}\pr_{1}(\pps \p)\Delta\p\\
&\qquad \qquad\qquad=-\displaystyle\int_{\T^n}\nabla V\cdot\nabla\p\Delta\p-\int_{\T^n}\Delta V\p\Delta\p +\int_{\T^n}\pps \pr_{1}\p\Delta\p\\
&\qquad \qquad\qquad\quad+\displaystyle\int_{\T^n}\frac{\int\pr_{11}V\p}{\int\p}\p\Delta\p+\displaystyle\int_{\T^n}\frac{\int\pr_{1}V\pr_{1}\p}{\int\p}\p\Delta\p\\
&\qquad \qquad\qquad\quad-\displaystyle\int_{\T^n}\frac{\int\pr_{1}V\p}{\int\p}\frac{\int\pr_{1}\p}{\int\p}\p\Delta\p.
\end{aligned}
\end{equation}
Using Young's inequality,
$$\left|\displaystyle\int_{\T^n}\nabla V\cdot\nabla\p\Delta\p\right|\leq\frac{1}{2\varepsilon}\|\nabla V\|_{\infty}^2 \int_{\T^n}|\nabla\p|^2+\frac{\varepsilon}{2}\int_{\T^n}|\Delta\p|^2 $$
and
$$\left|\displaystyle\int_{\T^n}\Delta V\p\Delta\p\right|\leq\frac{1}{2\varepsilon}\|\Delta V\|_{\infty}^2 \int_{\T^n}|\p|^2+\frac{\varepsilon}{2}\int_{\T^n}|\Delta\p|^2 .$$
We have also,
$$\left|\displaystyle\int_{\T^n}\pps \pr_{1}\p\Delta\p\right|\leq \frac{1}{2\varepsilon}\int_{\T^n}|\nabla\p|^2+\frac{\varepsilon}{2}\|\pps \|_{\infty}^2\int_{\T^n}|\Delta\p|^2 .$$
Using the fact that $\p\geq0$,
$$\left|\frac{\int\pr_{11}V\p}{\int\p}\p\Delta\p\right|\leq\frac{1}{2\varepsilon}\|\pr_{11}V\|_{\infty} ^2\int_{\T^n}|\p|^2+\frac{\varepsilon}{2}\int_{\T^n}|\Delta\p|^2 .$$
Similarly, 
$$\left|\displaystyle\int_{\T^n}\frac{\int\pr_{1}V\pr_{1}\p}{\int\p}\p\Delta\p\right|\leq \frac{1}{2\varepsilon}\|\pr_{1}V\|_{\infty}\lp\frac{\|\p\|_\infty}{\min\op}\rp^2\int_{\T^n}|\nabla\p|^2+\frac{\varepsilon}{2}\int_{\T^n}|\Delta\p|^2.$$
And Finally,
$$\left|\displaystyle\int_{\T^n}\frac{\int\pr_{1}V\p}{\int\p}\frac{\int\pr_{1}\p}{\int\p}\p\Delta\p\right|\leq \frac{1}{2\varepsilon}\|\pr_{1}V\|_{\infty}^2\lp\frac{\|\p\|_\infty}{\min\op}\rp^2 \int_{\T^n}|\nabla\p|^2+\frac{\varepsilon}{2}\int_{\T^n}|\Delta\p|^2 .$$
Then taking into account all the previous estimates, one can choose $\varepsilon $ small enough such that Equation \eqref{steq2} can be written as
\begin{equation}\label{stp2_0}
\frac{1}{2}\frac{d}{dt}\|\nabla\p\|^2_{2}+C_{1,\varepsilon}\|\Delta\p\|_{2}^2\leq C_{2,\varepsilon}\lp \frac{\|\p\|_\infty}{\min\op}\rp^2 \|\nabla\p\|^2_{2}+C_{3,\varepsilon}\|\p\|_{2}^2,
\end{equation}
where $C_{i,\varepsilon}>0$, $i=1,2,3$ are three constants.
\medskip

By \eqref{stp1}, one gets
$$\frac{1}{2}\frac{d}{dt}\|\nabla\p\|^2_2\leq C_{2,\varepsilon}\lp \frac{\|\p\|_\infty}{\min\op}\rp^2 \|\nabla\p\|^2_2+C_\varepsilon\|\p_0\|^2_2.$$
The function $C_{2,\varepsilon}\lp \frac{\|\p\|_\infty}{\min\op}\rp^2$ is in $ L^1((0,T_{\text{max}}))$ (by Corollary~\ref{bou2}) and thus, by Gronwall Lemma:
$$\displaystyle \|\nabla\p\|^2_2\leq \|\nabla\p_0\|^2_2\exp\lp \int_0^t C_{2,\varepsilon}\lp \frac{\|\p\|_\infty}{\min\op}\rp^2 dt\rp+C_{4,\varepsilon} ,\quad \forall t\in [0,T_{\text{max}}).$$
 Since $\p_0\in H^1(\T^n)$, then using Equation \eqref{stp2_0}, \eqref{stp1} and elliptic regularity results, one obtains that 
\begin{equation}
\p\in L^{\infty}((0,T_{\text{max}}),H^1(\T^n)))\cap L^{2}((0,T_{\text{max}}),H^2(\T^n))).
\end{equation}
\end{proof}

\begin{lemma}\label{inft}
The solution $\p$ given by Theorem~\ref{loc} belongs to $L^\infty((0,T_{\text{max}}),L^{\infty}(\T^n))$.
%$$\prx\pps \in  L^\infty((0,T_{\text{max}}),L^2(\T)). $$
\end{lemma}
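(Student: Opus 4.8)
The plan is to bypass the failure of the Sobolev embedding $H^1\hookrightarrow L^\infty$ in dimension $n\geq 2$ by exploiting the \emph{one-dimensional} supersolution $\MM$. By Lemma~\ref{mp} we already have $0\leq\p\leq\MM\,\e^{-\beta V/2}$, and since $V\in C^2(\T^n)$ this gives $\|\p(t)\|_{L^\infty(\T^n)}\leq \e^{-\frac{\beta}{2}\min_{\T^n}V}\,\|\MM(t)\|_{L^\infty(\T)}$ for $t\in(0,T_{\text{max}})$. Hence it suffices to bound $\MM$ in $L^\infty((0,T_{\text{max}}),L^\infty(\T))$, and since $\MM$ depends on the single variable $x_1\in\T$, the embedding $H^1(\T)\hookrightarrow L^\infty(\T)$ reduces the task to proving $\MM\in L^\infty((0,T_{\text{max}}),H^1(\T))$. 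Recall that we already know $\MM\in L^\infty((0,T_{\text{max}}),L^2(\T))\cap L^2((0,T_{\text{max}}),H^1(\T))$, and that $\MM_0$ is Lipschitz on $\T$, being the supremum over $x_2,\dots,x_n$ of the $C^1$ function $\p_0\,\e^{\beta V/2}$; in particular $\MM_0\in H^1(\T)$.

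I would then run an $H^1(\T)$ energy estimate on \eqref{mps}: multiply by $-\pr_{x_1x_1}\MM$ and integrate over $\T$. Integration by parts produces $\frac12\frac{d}{dt}\|\pr_{x_1}\MM\|_2^2+\beta^{-1}\|\pr_{x_1x_1}\MM\|_2^2$ on the left, a term $-\delta\|\pr_{x_1}\MM\|_2^2$ (harmless, being bounded by $|\delta|\,\|\pr_{x_1}\MM\|_2^2$), and the nonlocal contribution $\int_\T\prx(\pps\MM)\,\pr_{x_1x_1}\MM$. Splitting $\prx(\pps\MM)=(\prx\pps)\MM+\pps\,\prx\MM$, using $\|\pps\|_\infty\leq\|\prx V\|_\infty$ from Lemma~\ref{finft}, and applying Young's inequality, I would absorb all the resulting multiples of $\|\pr_{x_1x_1}\MM\|_2^2$ into the diffusion term and be left with a differential inequality of the form
\[
\frac{d}{dt}\|\pr_{x_1}\MM\|_2^2\;\leq\;C\,\|\pr_{x_1}\MM\|_2^2+C\,\|\prx\pps\|_\infty^2\,\|\MM\|_2^2 ,
\]
with $C=C(V,\beta)$. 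Since $\MM$ has only the a priori regularity $L^2_tH^1_x$, this formal computation should first be performed on a Galerkin approximation (or parabolic regularization) of \eqref{mps} and passed to the limit; the estimate itself then provides $\MM\in L^2_{\text{loc}}((0,T_{\text{max}}),H^2(\T))$, which legitimates the manipulations a posteriori.

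To close the argument I need the right-hand side to be integrable in time. The factor $\|\MM\|_2$ is bounded on $(0,T_{\text{max}})$ by construction, so the point is to control $\|\prx\pps\|_\infty$. Here I would use the explicit formula \eqref{der} for $\prx\pps$ together with the uniform lower bound $\op\geq\alpha>0$ on $(0,T_{\text{max}})$ (valid because $\op$ solves the heat equation on $\T$): each term of \eqref{der} is $\tfrac1\alpha$ times an expression of the form $\int_{\T^{n-1}}|D^k\p(t,\cdot)|\,dx_2\cdots dx_n$ with $|k|\leq 1$, and via $W^{1,1}(\T)\hookrightarrow L^\infty(\T)$ in the $x_1$-direction followed by $L^2(\T^n)\hookrightarrow L^1(\T^n)$ one obtains $\|\prx\pps(t,\cdot)\|_{L^\infty(\T)}\leq\frac{C}{\alpha}\|\p(t)\|_{H^2(\T^n)}$. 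Since $\p\in L^2((0,T_{\text{max}}),H^2(\T^n))$ by \eqref{stp2}, this shows $\|\prx\pps\|_\infty\in L^2((0,T_{\text{max}}))$, hence $\|\prx\pps\|_\infty^2\in L^1((0,T_{\text{max}}))$. Gronwall's lemma applied to the differential inequality above then gives $\sup_{(0,T_{\text{max}})}\|\pr_{x_1}\MM\|_2<\infty$, so $\MM\in L^\infty((0,T_{\text{max}}),H^1(\T))\hookrightarrow L^\infty((0,T_{\text{max}}),L^\infty(\T))$, and finally $\p\in L^\infty((0,T_{\text{max}}),L^\infty(\T^n))$.

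The step I expect to be delicate is precisely this control of the nonlocal coefficient: contrary to $\pps$ itself, $\prx\pps$ is not bounded uniformly in time at this stage, so the proof relies on two structural features — that in the energy identity $\prx\pps$ multiplies $\MM$ and not $\pr_{x_1}\MM$, so that only the $L^1_t$ quantity $\|\prx\pps\|_\infty^2\|\MM\|_2^2$ enters the Gronwall inequality, and that the previously established bound $\p\in L^2_tH^2_x$ is exactly what makes $\prx\pps$ lie in $L^2_tL^\infty_x$. Everything else is a routine one-dimensional parabolic energy estimate; a weak maximum principle applied directly to \eqref{mps}, using the same integrability of $\prx\pps$ in time, would be an alternative route to the same $L^\infty$ bound on $\MM$.
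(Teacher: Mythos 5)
Your proof is correct and follows the paper's own argument in all essentials: comparison with the one-dimensional supersolution $\MM$ via Lemma~\ref{mp}, an $H^1(\T)$ energy estimate on \eqref{mps} obtained by testing against $-\pr_{x_1x_1}\MM$, absorption into the diffusion term by Young's inequality, and Gronwall, followed by $H^1(\T)\hookrightarrow L^\infty(\T)$. The single point of divergence is the H\"older pairing on the critical term $\int_\T|\prx\pps|^2|\MM|^2$: you bound it by $\|\prx\pps\|_{L^\infty(\T)}^2\,\|\MM\|_{L^2(\T)}^2$, which obliges you to show $\|\prx\pps\|_{L^\infty(\T)}\in L^2_t$ using $\p\in L^2((0,T_{\text{max}}),H^2(\T^n))$ and a $W^{1,1}(\T)\hookrightarrow L^\infty(\T)$ argument in the $x_1$ variable, whereas the paper uses the opposite split $\|\prx\pps\|_{L^2(\T)}^2\,\|\MM\|_{L^\infty(\T)}^2$, for which $\p\in L^\infty((0,T_{\text{max}}),H^1(\T^n))$ gives $\prx\pps\in L^\infty_tL^2_{x_1}$ directly from \eqref{der}, and the already known $\MM\in L^2((0,T_{\text{max}}),H^1(\T))\hookrightarrow L^2_tL^\infty_{x_1}$ supplies the time integrability. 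Both splits place the forcing term in $L^1((0,T_{\text{max}}))$ so Gronwall closes; the paper's is marginally lighter since it invokes only the $H^1$ part of \eqref{stp2} and needs no extra derivative count on $\p$. Your added remarks on $\MM_0\in H^1(\T)$ and on justifying the formal computation by Galerkin approximation are sound points that the paper passes over silently; note also that, exactly as in the paper, the Gronwall factor grows like $\e^{Ct}$, so the resulting bound is really locally uniform in time, which is what the subsequent global-existence argument actually uses.
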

\begin{proof}
Recall that $\MM \in L^{\infty}((0,T_{\text{max}}),L^2(\T))\cap L^{2}((0,T_{\text{max}}),H^1(\T))$ is the solution of \eqref{mps}. We would like to prove more regularity on $\MM $ (and thus on $\p$). Multiplying Equation \eqref{mps} by $-\partial_{x_1x_1}\MM $, integrating by parts and using Gronwall lemma, one has
$$\begin{aligned}\label{dm}
&\displaystyle\frac{1}{2}\frac{d}{dt}\int_\T|\prx\MM |^2+\beta^{-1}\int_\T|\partial_{x_1x_1}\MM |^2\\
&= -\delta \int_\T|\prx\MM |^2+\int_\T\prx\lp\pps \MM \rp\partial_{x_1x_1}\MM \notag\\
&\leq -\delta \int_\T|\prx\MM |^2+\beta\int_\T|\prx\lp\pps \MM \rp|^2+\frac{\beta^{-1}}{4}\int_\T|\partial_{x_1x_1}\MM |^2\notag\\
&\leq (-\delta+\|\prx V\|_\infty) \int_\T|\prx\MM |^2+\beta\int_\T|\prx\lp\pps\rp|^2 |\MM |^2 +\frac{\beta^{-1}}{4}\int_\T|\partial_{x_1x_1}\MM |^2.
\end{aligned}$$
Recall that 
$$\displaystyle\prx \pps =\displaystyle\frac{\int_{\T^{n-1}}\pr_{x_1x_1}V\p}{\int_{\T^{n-1}}\p}+\frac{\int_{\T^{n-1}}\prx V\prx \p}{\int_{\T^{n-1}}\p} -\frac{\int_{\T^{n-1}}\prx V\p}{\int_{\T^{n-1}}\p}\frac{\int_{\T^{n-1}}\prx \p}{\int_{\T^{n-1}}\p}.
$$
Since $\p\in L^{\infty}((0,T_{\text{max}}),H^1(\T^n)))$, then $\prx \pps\in  L^{\infty}((0,T_{\text{max}}),L^2(\T^n)))$ with
$$\|\prx\pps\|^2_{L^\infty((0,T_{\text{max}}),L^2(\T))}\leq 3\|\pr_{x_1x_1} V\|_\infty^2+6\frac{\|\prx V\|_\infty^2}{(\min \op)^2}\|\prx \p\|_{L^\infty((0,T_{\text{max}}),L^2(\T^n))}^2. $$

Since $\MM \in L^{2}((0,T_{\text{max}}),L^\infty(\T)))$, then
\begin{align*}
\displaystyle\frac{1}{2}\frac{d}{dt}\int_\T|\prx\MM |^2+\frac{3\beta^{-1}}{4}\int_\T|\partial_{x_1x_1}\MM |^2&\leq  (-\delta+\|\prx V\|_\infty) \int_\T|\prx\MM |^2\\
&\quad \displaystyle+\beta\|\prx\pps\|_2^2\|\MM \|_\infty^2.
\end{align*}
Applying now Gronwall lemma, we get
\begin{equation}
\MM \in L^{\infty}((0,T_{\text{max}}),H^1(\T)))\cap L^{2}((0,T_{\text{max}}),H^2(\T))).
\end{equation}
Therefore, the assertion follows directly from the fact that $\p\leq\MM \,\e^{\frac{-\beta V}{2}}$ (by Lemma~\ref{mp}) and from the embedding $H^1(\T)\hookrightarrow L^{\infty}(\T)$.

\end{proof}

The following proposition shows a polynomial bound for the nonlinear functional $ F$ which is useful to prove later the global existence.
\begin{proposition}(Polynomial bound)\label{propol}\\
Let us suppose that $\p :[0,T_{\text{max}})\rightarrow W^{\sigma,p}(\mathbb{T}^n)$ is the solution $\p$ given by Theorem~\ref{loc} of \eqref{fpp}. There exists $C_{F }>0$ such that the nonlinear functional $F $ satisfies the following polynomial bound:
 \begin{equation}\label{pol}
\| F  (\p(t))\|_p\leq C_{F}\|\p(t)\|_{\s,p},\qquad \forall t\in [0,T_{\text{max}}).
\end{equation} 
\end{proposition}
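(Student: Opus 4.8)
The plan is to bound each of the three contributions to $F(\p) = \nabla V\cdot\nabla\p + \Delta V\,\p - \prx(\pps\p)$ separately in $L^p$, expanding the last term as $\prx(\pps\p) = (\prx\pps)\p + \pps\,\prx\p$, and to control every factor that is \emph{not} a derivative of $\p$ by a constant depending only on $V$ and on the uniform lower bound $\alpha>0$ for $\op$, which is available on all of $(0,T_{\max})$ by the remarks at the start of Section~\ref{priob}. Since $V\in C^2(\T^n)$, the first two terms are immediately bounded by $C\|\p\|_{1,p}\le C\|\p\|_{\s,p}$. For the term $\pps\,\prx\p$, Lemma~\ref{finft} gives $\|\pps\|_\infty \le \|\prx V\|_\infty$, so $\|\pps\,\prx\p\|_p \le \|\prx V\|_\infty\|\prx\p\|_p \le C\|\p\|_{\s,p}$.

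The only term requiring a little more care is $(\prx\pps)\p$. Here I would use the explicit formula \eqref{der} for $\prx\pps$ together with the lower bound $\op\ge\alpha$: each of the three summands in \eqref{der} is a ratio whose denominator is bounded below by $\alpha$ (or $\alpha^2$) and whose numerator involves $V$-derivatives (bounded in $L^\infty$) against $\p$ or $\prx\p$ integrated over $\T^{n-1}$. Estimating the partial integrals pointwise and using the embedding $W^{\s,p}(\T^n)\hookrightarrow W^{1,\infty}(\T^n)$ (valid under $[\mathcal{H}_2]$, cf.~\eqref{boun}) to absorb $\|\prx\p\|_\infty \le C\|\p\|_{\s,p}$, one gets $\|\prx\pps\|_\infty \le \frac{C}{\alpha}\|\p\|_{\s,p}$, and hence $\|(\prx\pps)\p\|_p \le \|\prx\pps\|_\infty\|\p\|_p \le \frac{C}{\alpha}\|\p\|_{\s,p}^2$. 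This is quadratic rather than linear, so to obtain the claimed \emph{polynomial} (in fact linear) bound \eqref{pol} I would instead put one of the two $\p$-factors in $L^\infty$: by Lemma~\ref{inft} the solution satisfies $\sup_{t\in(0,T_{\max})}\|\p(t)\|_\infty =: M < \infty$, so $\|(\prx\pps)\p\|_p \le \|\p\|_\infty\,\|\prx\pps\|_p \le M\cdot\frac{C}{\alpha}\|\p\|_{\s,p}$, using the $L^p$-estimate $\|\prx\pps\|_p \le \frac{C}{\alpha}(\|\p\|_{1,p}+\|\prx V\|_\infty\|\prx\p\|_p/\cdots)$ coming from \eqref{der} and Lemma~\ref{majfi}-type manipulations. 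Collecting the four pieces and setting $C_F$ to be the maximum of the constants produced yields \eqref{pol} with $C_F$ depending on $V$, $\alpha$, $\beta$ and $M$.

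The main obstacle is really just the nonlocal term $(\prx\pps)\p$: one must avoid the naive quadratic estimate and instead exploit the uniform $L^\infty$ bound on $\p$ from Lemma~\ref{inft} (which itself rests on the supersolution argument of Lemma~\ref{mp} and Lemma~\ref{inft}) together with the uniform lower bound $\op\ge\alpha$. Everything else is a routine application of H\"older's inequality, the product rule, the Sobolev embeddings recorded in Section~\ref{not}, and the fact that $W^{\s,p}(\T^n)$ is a Banach algebra for $\s\ge 1$, $p>n$. No delicate cancellation is needed; the constant $C_F$ is allowed to depend on the fixed data, so uniformity in $t$ follows from uniformity of $\alpha$ and $M$ over the whole maximal interval.
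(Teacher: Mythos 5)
Your proposal is correct and follows essentially the same route as the paper: both decompose $F(\p)$ into $\nabla V\cdot\nabla\p+\Delta V\p-\pps\prx\p-(\prx\pps)\p$, bound the first three terms via $V\in C^2$ and Lemma~\ref{finft}, and defuse the potentially quadratic nonlocal term $(\prx\pps)\p$ by combining the uniform lower bound $\op\geq\alpha$ with the uniform $L^\infty$ bound on $\p$ from Lemma~\ref{inft}. The only cosmetic difference is bookkeeping — the paper absorbs $\|\p\|_\infty/\min\op$ into a time-dependent constant $C(t)$ term by term from \eqref{der}, while you factor out $\|\p\|_\infty$ against $\|\prx\pps\|_p$ — but the estimate and the ingredients are the same.
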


\begin{proof}
For every $ t\in [0,T_{\text{max}})$, using Lemma~\ref{finft},
$$\begin{aligned}
\| F  (\p)\|_p&= \|\nabla V . \nabla \p+\Delta V\p -\prx (\pps \p)\|_p\\
&\leq \|\nabla V\|_{\infty} \|\nabla\p\|_{p}+ \|\Delta V\|_{\infty}\|\p\|_p +\|\pps \|_{\infty}\|\prx \p\|_p\\
&\quad + \displaystyle\left\|\frac{\displaystyle\int_{\T^{n-1}}\!\!\!\!\!\!\pr_{x_1x_1}V\p}{\displaystyle\int_{\T^{n-1}}\!\!\!\p}\p\right\|_p+\left\|\frac{\displaystyle\int_{\T^{n-1}}\!\!\!\!\!\!\prx V\prx \p}{\displaystyle\int_{\T^{n-1}}\!\!\!\p}\p\right\|_p+\left\|\frac{\displaystyle\int_{\T^{n-1}}\!\!\!\!\!\!\prx V\p}{\displaystyle\int_{\T^{n-1}}\!\!\!\p}\frac{\displaystyle\int_{\T^{n-1}}\!\!\!\!\!\!\prx \p}{\displaystyle\int_{\T^{n-1}}\!\!\!\p}\p\right\|_p\\
&\leq\|\nabla V\|_{\infty} \|\nabla\p\|_{p}+ \|\Delta V\|_{\infty}\|\p\|_p +\|\pps \|_{\infty}\|\prx \p\|_p\\
&\quad + \|\pr_{x_1x_1} V\|_{\infty} \|\p \|_{p}+\|\prx  V\|_{\infty}\frac{\|\p\|_\infty}{\min \op}\|\prx  \p\|_p+\|\prx  V\|_{\infty}\|\prx \ln\op\|_{\infty}\|\p\|_p\\
&\leq  \|\nabla V\|_{\infty} \|\p\|_{\s,p}+ \|\Delta V\|_{\infty}\|\p\|_{\s,p} +\|\prx  V\|_{\infty}\| \p\|_{\s,p}+\|\pr_{x_1x_1} V\|_{\infty} \|\p \|_{\s,p}\\
&\quad+\|\prx  V\|_{\infty}\frac{\|\p\|_\infty}{\min \op}\|\p\|_{\s,p}+\|\prx  V\|_{\infty}\|\prx \ln\op\|_{\infty}\|\p\|_p\\
&\leq C(t)\|\p\|_{\s,p},
\end{aligned}$$
where $\displaystyle C(t)=4\| V\|_{C^2}+\|\prx  V\|_{\infty}\frac{\|\p\|_\infty}{\min \op}+\|\prx  V\|_{\infty}\|\prx \ln\op\|_{\infty}$. Now, Lemma~\ref{inft} implies that there exists $C_{F } >0$ such that $C(t)\leq C_F$, $\forall t\in [0,T_{\text{max}})$.
 \end{proof}
 \begin{proposition}\label{prio}
The solution $\p$ given by Theorem~\ref{loc} satisfies
\begin{equation}\label{tau}
\displaystyle \p\in L^{\infty}((0,T_{\text{max}}),W^{\s,p}(\T^n)).
\end{equation}
\end{proposition}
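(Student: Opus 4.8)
The plan is to obtain the bound \eqref{tau} directly from the mild formulation \eqref{mldn} together with the polynomial bound of Proposition~\ref{propol} and the smoothing estimate \eqref{or1}. First I would fix $\delta>0$ and work on the interval $[\delta,T_{\text{max}})$; by Lemma~\ref{lipsh2} (with $\tau=\s$, applied to $\e^{\beta^{-1}t\Delta}\p_0$ starting from time $\delta$, where $\p(\delta)\in W^{\s,p}$) the free term is bounded in $W^{\s,p}$, uniformly in $t$. For $t\in[\delta,T_{\text{max}})$ write
\begin{equation*}
\p(t)-1=\e^{\beta^{-1}(t-\delta)\Delta}(\p(\delta)-1)+\int_\delta^t\e^{\beta^{-1}(t-s)\Delta}F(\p(s))\,ds,
\end{equation*}
take the $W^{\s,p}_0$-norm, and use $\|\e^{-(t-s)A_p}\|_{\mathcal{L}(L^p_0,W^{\s,p}_0)}\leq\widehat\alpha(t-s)=C_\s(t-s)^{-\s/2}\e^{-\kappa(t-s)}$ from \eqref{or1}, so that
\begin{equation*}
\|\p(t)-1\|_{\s,p}\leq C\|\p(\delta)-1\|_{\s,p}+C_\s\int_\delta^t(t-s)^{-\s/2}\e^{-\kappa(t-s)}\|F(\p(s))\|_p\,ds.
\end{equation*}
Now insert the polynomial bound $\|F(\p(s))\|_p\leq C_F\|\p(s)\|_{\s,p}$ from Proposition~\ref{propol}.

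Setting $g(t):=\|\p(t)-1\|_{\s,p}$ and noting $\|\p(s)\|_{\s,p}\leq g(s)+\|1\|_{\s,p}$, this yields an integral inequality of the form
\begin{equation*}
g(t)\leq A+C_FC_\s\int_\delta^t(t-s)^{-\s/2}\e^{-\kappa(t-s)}g(s)\,ds,
\end{equation*}
with $A$ independent of $t$ because $\int_0^\infty\widehat\alpha<\infty$ (the contribution of the constant $\|1\|_{\s,p}$ is absorbed into $A$). Since $\s<2$, the kernel $(t-s)^{-\s/2}$ is integrable and $\widehat\alpha\in L^1(0,\infty)$, so a singular (weakly singular) Gronwall lemma applies and gives $g$ bounded uniformly on $[\delta,T_{\text{max}})$, with a bound depending only on $\delta$, $C_F$, $C_\s$, $\kappa$ and $\|\p(\delta)\|_{\s,p}$ — crucially \emph{not} on $T_{\text{max}}$. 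Combined with the fact that $\p\in C([0,\delta],\D^{\s,p}(\T^n))$ is automatically bounded on the compact $[0,\delta]$, this establishes $\p\in L^\infty((0,T_{\text{max}}),W^{\s,p}(\T^n))$. One should also record that the lower bound $\op\geq\alpha>0$ on $(0,T_{\text{max}})$ (used in Proposition~\ref{propol} via Lemma~\ref{inft}) holds with $\alpha$ independent of $t$, which is guaranteed by Remark~\ref{posdi} since $\op$ solves the linear heat equation.

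The main obstacle is making the singular Gronwall step rigorous: one cannot iterate the naive inequality once because the kernel is not bounded, so the standard device is to iterate it (Picard-type), observing that the $k$-fold convolution of $(t-s)^{-\s/2}\e^{-\kappa(t-s)}$ becomes bounded for $k$ large enough (the exponents $1-\s/2$ accumulate), and then sum the resulting series; alternatively one invokes the generalized Gronwall inequality with weakly singular kernel (e.g. the Henry–Gronwall lemma). A secondary point requiring care is the uniformity in $t$ of the constant $C_F$ from Proposition~\ref{propol}: this is exactly where Lemma~\ref{inft} ($\p\in L^\infty((0,T_{\text{max}}),L^\infty(\T^n))$) and the uniform positivity of $\min\op$ enter, so it is worth stating explicitly that $C_F$ does not degenerate as $t\uparrow T_{\text{max}}$. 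Everything else — the bound on the free term, the embedding constants, absorbing the constant function $1$ — is routine.
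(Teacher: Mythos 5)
Your proposal follows essentially the same route as the paper: start from the mild formulation, apply the smoothing estimate \eqref{or1} to the Duhamel term, insert the polynomial bound of Proposition~\ref{propol} (whose constant is uniform in time thanks to Lemma~\ref{inft} and the uniform lower bound on $\op$, as you correctly note), and close with a Gronwall lemma adapted to the weakly singular kernel $(t-s)^{-\s/2}\e^{-\kappa(t-s)}$. The paper runs the estimate from $t=0$ (the free term $\zeta(t)=\|\e^{-tA_p}(\p_0-1)\|_{\s,p}$ is already bounded by $\|\p_0-1\|_{\s,p}$ since $\p_0\in\D^{\s,p}(\T^n)$, so there is no need to restart at $t=\delta$) and invokes its own Lemma~\ref{gron} rather than the Henry--Gronwall iteration you describe, but these are cosmetic differences.

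The one point that is wrong in your write-up is the claim that the resulting bound depends ``crucially not'' on $T_{\text{max}}$. The weakly singular Gronwall step --- whether via the $k$-fold convolution argument you sketch or via Lemma~\ref{gron} of the paper --- yields a bound of the form $g(t)\leq C\,\zeta^*(t)\exp(\lambda t)$ for some $\lambda>0$; a time-uniform constant would require the $L^1$ mass $C_FC_\s\int_0^\infty s^{-\s/2}\e^{-\kappa s}\,ds$ to be strictly less than $1$, which is not guaranteed here. So the supremum over $(0,T_{\text{max}})$ obtained this way does depend on $T_{\text{max}}$. This does not break the overall argument, because the proposition is used inside the contradiction argument of Theorem~\ref{glrs}, where $T_{\text{max}}$ is assumed finite and an exponentially growing bound suffices --- indeed this is exactly what the paper's own proof delivers --- but you should not advertise a uniformity in $t$ that the estimate does not provide.
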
 
To prove this proposition, we need the following lemma.
\begin{lemma}\label{gron}
For $t\in [0,T_{\text{max}})$, suppose that there exists a decreasing function $\gamma\in L^1((0,T_{\text{max}}),\mathbb{R}^{+})$ and two functions $u$ and $\zeta\in C([0,T_{\text{max}}),\mathbb{R}^{+})$, such that the following inequality holds
$$u(t)\leq \zeta(t)+\displaystyle\int_0^{t}\gamma (t-s)u(s)ds,\quad \forall t\in [0,T_{\text{max}}). $$
Then, there exists a constant $\delta>0$, depending only on $\gamma$, such that 
$$u(t)\leq 2\zeta^*(t)\exp(\delta t),\quad \mbox{for }\, 0\leq t<T_{\text{max}},$$
where $\zeta^*(t):={\rm max}\{\zeta(s)\,|\, 0\leq s\leq t\}.$
\end{lemma}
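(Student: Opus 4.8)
The plan is to use a classical singular Gronwall / Henry-type argument adapted to the fact that the kernel $\gamma$ is only assumed to be $L^1$ (and not bounded near $0$). The key idea is that although $\gamma$ need not be small, its $L^1$ tail becomes small on short time intervals, so the convolution operator $u\mapsto \int_0^t\gamma(t-s)u(s)\,ds$ becomes a contraction in $\sup$-norm on sufficiently short time windows; one then iterates over a finite number of windows covering $[0,T)$ for any $T<T_{\text{max}}$, which produces the exponential factor $\exp(\delta t)$.

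First I would fix $T<T_{\text{max}}$ and, since $\gamma\in L^1((0,T_{\text{max}}),\R^+)$, choose $a>0$ small enough that $\int_0^a\gamma(r)\,dr\leq 1/2$; this is where the hypothesis that $\gamma$ is decreasing (hence locally integrable and with a well-controlled tail) is used, and $\delta$ will be defined through $a$, e.g. $\delta:=\frac{1}{a}\ln 2$ (or any convenient quantity depending only on $\gamma$). On the window $[0,a]$ one writes, for $t\in[0,a]$,
\begin{equation*}
u(t)\leq \zeta^*(t)+\left(\sup_{0\leq \tau\leq t}u(\tau)\right)\int_0^t\gamma(t-s)\,ds\leq \zeta^*(a)+\tfrac12\sup_{0\leq \tau\leq a}u(\tau),
\end{equation*}
and taking the supremum over $t\in[0,a]$ gives $\sup_{[0,a]}u\leq 2\zeta^*(a)$. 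Since $t\mapsto u(t)$ and $\zeta^*$ are finite and continuous, this bound is legitimate.

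Next I would propagate the estimate to $[a,2a]$, $[2a,3a]$, and so on. On the interval $[ka,(k+1)a]$, split the integral $\int_0^t = \int_0^{ka} + \int_{ka}^t$; the first piece is bounded by $\|\gamma\|_{L^1}\sup_{[0,ka]}u$ (already controlled by the induction hypothesis) and the second piece is again bounded by $\tfrac12\sup_{[ka,(k+1)a]}u$ using $\int_0^{a}\gamma\leq 1/2$. Absorbing the half-term on the left and iterating, one finds $\sup_{[0,ka]}u\leq 2^{k}\,C\,\zeta^*(ka)$ for a constant depending only on $\|\gamma\|_{L^1}$; since $2^{k}=\exp(k\ln 2)=\exp(\delta\,ka)$ and $ka\leq t$, this yields $u(t)\leq 2\zeta^*(t)\exp(\delta t)$ after renaming constants (using that $\zeta^*$ is nondecreasing to write $\zeta^*(ka)\leq\zeta^*(t)$). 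Finally, since $T<T_{\text{max}}$ was arbitrary, the estimate holds on all of $[0,T_{\text{max}})$.

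The only genuinely delicate point is bookkeeping the constants so that the final bound has exactly the clean form $2\zeta^*(t)\exp(\delta t)$ with $\delta$ depending on $\gamma$ alone: one must be careful that the factor picked up from the ``old'' part of the convolution at each step, namely $\|\gamma\|_{L^1}$, is reabsorbed into the exponential rate rather than accumulating as a separate geometric factor. This is handled by choosing the window length $a$ also small enough that $\|\gamma\|_{L^1}$ times the relevant constant is controlled, or equivalently by a slightly more careful induction in which the $\exp(\delta t)$ growth is built in from the start; either way it is routine once the contraction-on-short-intervals mechanism is in place.
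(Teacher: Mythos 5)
Your argument is correct in substance, but it follows a genuinely different route from the paper's. Both proofs hinge on the same core observation --- choose a length $\varepsilon$ (your $a$) with $\int_0^{\varepsilon}\gamma\leq \tfrac12$ so that the near-diagonal part of the convolution can be absorbed into the left-hand side --- but they diverge in how the remaining ``far'' part is treated and how the exponential is produced. The paper splits $\int_0^r=\int_0^{r-\varepsilon}+\int_{r-\varepsilon}^r$ and uses the \emph{monotonicity} of $\gamma$ to bound $\gamma(r-s)\leq\gamma(\varepsilon)$ on the far part, which collapses the inequality to the classical non-singular form $u^*(t)\leq 2\zeta^*(t)+2\gamma(\varepsilon)\int_0^t u^*(s)\,ds$, and the exponential then comes from the standard Gronwall lemma in one stroke. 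You instead bound the far part crudely by $\|\gamma\|_{L^1}\sup_{[0,ka]}u$ and run a discrete induction over windows of length $a$, so the exponential emerges as a geometric factor $(1+2\|\gamma\|_{L^1})^{k}$ with $k\leq t/a$ (note it is this ratio, not $2^{k}$ as you write at one point, but this only changes the value of $\delta$). What your approach buys is that it never uses the decreasingness of $\gamma$ --- only $\gamma\in L^1$ --- so it is slightly more general; what it costs is the constant bookkeeping you flag yourself: to land exactly on $2\zeta^*(t)\e^{\delta t}$ you must either build the bound $U_k\leq 2\zeta^*(ka)\e^{\delta ka}$ into the induction hypothesis (which works with $\e^{\delta a}\geq 1+2\|\gamma\|_{L^1}$) or accept a further enlargement of $\delta$ to convert $(k+1)a$ into $t$; either fix is routine and $\delta$ still depends only on $\gamma$. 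The paper's reduction to the continuous Gronwall inequality avoids this bookkeeping entirely, at the price of genuinely needing $\gamma$ decreasing (or at least bounded on $[\varepsilon,\infty)$).
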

The proof of this lemma can be found in \citep{am:84}, Lemma~2.2, see also Appendix~\ref{prgron}.\\
\\
\textbf{Proof of Proposition~\ref{prio}.}
Recall that $\p$ satisfies
\begin{equation}\label{integ}
\p(t)-1=\mathrm{e}^{-t A_p }(\p _0-1)+\displaystyle \int_{0}^{t}\mathrm{e}^{-(t-s) A_p }F (\p(s))ds\qquad \forall t\in (0,T_{\text{max}}).
\end{equation}
Let $t\in (0,T_{\text{max}})$. Using \eqref{or1} and \eqref{pol}, one has\\

$\begin{aligned}
\|\p(t)-1\|_{\sigma,p}\leq & \|{\rm e}^{-tA_p }(\p _0-1)\|_{\sigma,p}+\displaystyle \int_0^{t}\|{\rm e}^{-(t-s)A_p }\|_{\mathcal{L}(L^p,W^{\sigma,p}_0)}\|F (\psi(s))\|_pds\\
\leq & \|{\rm e}^{-tA_p }(\p _0-1)\|_{\sigma,p}+\displaystyle\int_0^tC_\s (t-s)^{-\frac{\sigma}{2}}\e^{-\kappa (t-s)}C_{F}\|\psi\|_{\sigma,p}ds\\
\leq  & \zeta (t)+C_{\s,F}\displaystyle\int_0^{t}(t-s)^{-\frac{\sigma}{2}}\e^{-\kappa (t-s)}\|\psi\|_{\sigma,p}ds,
\end{aligned}$\\
where $\zeta(t):=\|e^{-tA_p }(\p _0-1)\|_{\sigma,p}$. By Lemma~\ref{gron}, there exists $\delta>0$ such that
$$\|\psi(t)\|_{\sigma,p}\leq 2\zeta^*(t)\exp(\delta t),\quad \mbox{for}\, 0\leq t<T_{\text{max}},$$
where $\zeta^*(t):={\rm max}\{\zeta(s)\,|\, 0\leq s\leq t\}\in L^\infty([0,T_{\text{max}}])$. Therefore, the assertion follows immediately.
$\square$

 \subsection{Global existence: Proof of Theorem~\ref{glo}}
The global existence result follows from the following general result.
 \begin{theorem}\label{glrs}
 Suppose that $ F  (\p)\in L^{\infty}((0,T_{\text{max}}),L^p(\T^n))$, then $T_{\text{max}}=+\infty$ and $\p$ is a global solution. 
 \end{theorem}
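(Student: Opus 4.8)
The plan is to argue by contradiction: assume $T_{\text{max}}<\infty$, and show that the maximal mild solution $\psi$ extends continuously to $t=T_{\text{max}}$ with $\psi(T_{\text{max}})\in\D^{\s,p}(\T^n)$. Restarting the local existence result (Theorem~\ref{mldexs}) from the datum $\psi(T_{\text{max}})$ then produces a mild solution on a strictly larger time interval — a proper extension of $\psi$ — which contradicts the maximality of $(0,T_{\text{max}})$. Hence $T_{\text{max}}=+\infty$; $\psi$ is then a global mild solution and, by Theorem~\ref{loc}, a global $L^p$- and classical solution as well.

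Set $M:=\sup_{t\in(0,T_{\text{max}})}\|F(\psi(t))\|_p<\infty$, which is finite by hypothesis. The crux is a Cauchy estimate for $\psi(t)$ in $W^{\s,p}(\T^n)$ as $t\to T_{\text{max}}^-$. Fix $\gamma\in(\s/2,1)$, which is possible precisely because $\s<2$. For $0<t<t'<T_{\text{max}}$, subtracting the mild formula \eqref{mld2} at $t'$ and at $t$ gives
\begin{align*}
\psi(t')-\psi(t)={}&\lp\e^{-t'A_p}-\e^{-tA_p}\rp(\psi_0-1)+\int_t^{t'}\e^{-(t'-s)A_p}F(\psi(s))\,ds\\
&\quad+\int_0^t\lp\e^{-(t'-s)A_p}-\e^{-(t-s)A_p}\rp F(\psi(s))\,ds.
\end{align*}
As $t,t'\to T_{\text{max}}^-$: the first term tends to $0$ because $s\mapsto\e^{-sA_p}(\psi_0-1)$ is continuous on $[0,\infty)$ into $W^{\s,p}_0$ by strong continuity of the semigroup (note $\psi_0-1\in W^{\s,p}_0$), hence uniformly continuous on $[0,T_{\text{max}}]$; the second term is $\le M\int_0^{t'-t}\widehat\alpha(\tau)\,d\tau$ by \eqref{or1}, which tends to $0$ since $\widehat\alpha\in L^1$; and, by \eqref{dex} with the chosen $\gamma$, the third term is $\le C_{\s,\gamma}\,M\,(t'-t)^{\gamma-\s/2}\int_0^t(t-s)^{-\gamma}\,ds\le C\,M\,T_{\text{max}}^{1-\gamma}(t'-t)^{\gamma-\s/2}$, which tends to $0$ because $\gamma>\s/2$. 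Hence $(\psi(t))_{t<T_{\text{max}}}$ is Cauchy in the complete space $W^{\s,p}(\T^n)$ and converges to some $\psi^\ast$, so that $\psi\in C([0,T_{\text{max}}],W^{\s,p}(\T^n))$.

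It remains to verify $\psi^\ast\in\D^{\s,p}(\T^n)$, i.e. $\op^\ast>0$. By Proposition~\ref{diff}, $\op$ solves the one-dimensional heat equation \eqref{diffeq}, and by Remark~\ref{posdi} it stays strictly positive for all $t\ge 0$; since $W^{\s,p}(\T^n)\hookrightarrow C(\T^n)$ by \eqref{boun}, the convergence just obtained gives $\op^\ast=\op(T_{\text{max}})>0$, so $\psi^\ast\in\D^{\s,p}(\T^n)$. Running Theorem~\ref{mldexs} from $\psi^\ast$ and concatenating the resulting local solution with $\psi$ (using the evolution property of $\{\e^{-tA_p}\}$ in \eqref{mld2} to check the integral equation on the glued interval) produces the announced extension, and the contradiction; thus $T_{\text{max}}=+\infty$. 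The main obstacle is the third term of the decomposition: one must absorb the singular kernel $(t-s)^{-\gamma}$ produced by \eqref{dex} while retaining a strictly positive power of $(t'-t)$, which is possible only in the range $\gamma\in(\s/2,1)$ — precisely where the hypothesis $\s<2$ enters. A secondary, problem-specific difficulty is that $\D^{\s,p}(\T^n)$ is merely an open subset of $W^{\s,p}(\T^n)$, so one must exclude the possibility that $\psi$ reaches its boundary $\{\op=0\}$ as $t\to T_{\text{max}}$; this is exactly what the heat-equation structure of $\op$ rules out.
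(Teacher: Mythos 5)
Your proposal is correct and follows essentially the same route as the paper: both arguments extend $\p$ continuously to $t=T_{\text{max}}$ in $W^{\s,p}(\T^n)$ using the uniform $L^p$ bound on $F(\p)$ together with the semigroup estimates \eqref{or1} (and, in your more detailed Cauchy estimate, \eqref{dex}), then rule out the limit leaving $\D^{\s,p}(\T^n)$ via the strict positivity of $\op$ from Proposition~\ref{diff}, and conclude by restarting the local existence theory to contradict maximality. Your three-term decomposition merely spells out the continuity of $t\mapsto\int_0^t\e^{-(t-s)A_p}F(\p(s))\,ds$ that the paper obtains more tersely (cf.\ Lemma~\ref{lipsh1}), so no substantive difference remains.
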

\begin{proof}
We follow Amann \citep{am:84} to prove this proposition. We suppose that $T_{\text{max}}<+\infty$, then we study the behaviour of $\p(t)$ as $t\rightarrow T_{\text{max}} $. By \eqref{or1}, since $ F  (\p)\in L^{\infty}((0,T_{\text{max}}),L^p(\T^n))$, we have
$$\lp t\mapsto v (t):=\displaystyle\int_0^t {\rm e}^{-(t-s) A_p } F  (\p(s))ds\rp \in C([0,T_{\text{max}}],W^{\s,p}(\T^n)) .$$
Denote $\omega(t):={\rm e}^{-t A_p }\p_0+v(t)$, then $\omega\in C([0,T_{\text{max}}],W^{\s,p})$, since $\p_0\in W^{\s,p}(\T^n).$ Consequently, $\omega(T_{\text{max}})=\displaystyle\lim_{t\rightarrow T_{\text{max}}}\omega(t)$ exists in $W^{\s,p}(\T^n)$. 
\medskip

If $\omega(T_{\text{max}})\in \D ^{\s,p}(\T^n)$, then $\omega$ is a solution of the integral equation \eqref{iee} on $[0,T_{\text{max}}]$ extending $\p$, which contradicts the fact that $\p$ is a maximal solution. Hence, $\omega(T_{\text{max}})\in \lp W^{\s,p}(\T^n)\bigcap \D^{\s,p}(\T^n)\rp^c$, which means that $\overline{\omega} $ is zero at some point, but this is again impossible since $\overline{\omega}=\op>0$. Therefore the assumption $T_{\text{max}}<\infty$ is false, and thus $T_{\text{max}}=+\infty$.\\
\end{proof}

\medskip

We are now in position to prove the global existence result for the initial problem \eqref{fpp} announced in Theorem~\ref{glo}~$(i)$. In fact, using Proposition~\ref{propol} and Proposition~\ref{prio}, we obtain that $F(\p)$ belongs to $L^{\infty}((0,T_{\text{max}}),L^p(\T^n))$. We thus conclude by Theorem~\ref{glrs} that $T_{\text{max}}=+\infty$.

\medskip

This last assertion implies that the orbit $\gamma^+(\p_0)$ exists for all time and is bounded in $W^{s,p}(\T^n)$ for every $s\in [\s,2) .$ Indeed, recall that (by Equation \eqref{or1}) there exists a constant $\kappa>0$ and a constant $C_{\rho}>0$ such that
\begin{equation}\label{or11}
\|\mathrm{e}^{-t A_p }\|_{\mathcal{L}(L^p,W^{\rho,p})}\leq C_{\rho}t^{-\frac{\rho}{2}}\mathrm{e}^{-\kappa t},
\end{equation}
for all $t>0$, provided $0<\rho<2$. Let $0<\delta<T_{\text{max}}$, we can write:
\begin{equation}\label{or2}
\p(t)-1=\mathrm{e}^{-(t-\delta) A_p }e^{-\delta  A_p }(\p_0-1)+\displaystyle \int_0^{t}\mathrm{e}^{-(t-s) A_p } F  (\p(s))ds,\quad \text{for } \delta\leq t<T_{\text{max}}.
\end{equation}
 Since $\mathrm{e}^{-\delta  A_p }(\p_0-1)\in W^{2,p}\hookrightarrow W^{\rho,p}$, then using \eqref{or11}, \eqref{or2} and the boundedness of $ F    (\gamma^+(\p_0))$ in $L^p(\T^n)$, we have
$$\|\p(t)\|_{\rho,p}\leq C_{\rho},\quad \forall\,t\in [\delta,T_{\text{max}}).$$
Since $\T^n$ is bounded, choosing $\rho>\s$ and using the compact embedding $W^{\rho,p}(\T^n)\hookrightarrow W^{\s,p}(\T^n)$, we obtain that $\{\p(t);\delta<t<T_{\text{max}}\}$ is relatively compact in $W^{\s,p}(\T^n)$.

\medskip

The assertion $\gamma^{+}(\p_0)=\{\p(t);\delta<t<T_{\text{max}}\}$ is relatively compact in $C^1(\T^n)$ follows from the fact that $\{\p(t) ; 0 \leq t \leq \delta\}=\p([0,\delta])$ is compact (continuous image of a compact set) and by using the compact embedding \eqref{boun} since $\s>1+\frac{n}{p}$. This proves the assertion~$(ii)$ of Theorem~\ref{glo}.

\appendix
\section{Proofs of various results}
\subsection{Proof of Lemma~\ref{spa}}\label{prspa}

The operator $ A_p$ is a strongly elliptic operator of order $2$ and $D( A_p )=W^{2,p}(\T^n)\cap L^p_0(\T^n)$ which is dense in $L^p_0(\T^n)$ by the embedding \eqref{sobo}. Therefore, by a straightforward adaptation of Theorem~7.3.6 in \citep{paz:83}, where Dirichlet boundary conditions are considered instead of periodic boundary conditions, $- A_p $ generates a strongly continuous analytic semigroup of contraction $\{e^{-t A_p };\, t\geq 0\}$ on $L^p_0(\T^n)$. The Hille-Yosida theorem (see Theorem~1.3.1 in \citep{paz:83}), provides the fact that the resolvent of $- A_p $ contains $\R_+$.

\medskip

 Since the operator $- A_p$ generates a strongly continuous analytic semigroup $\{e^{-t A_p };\, t\geq 0\}$ on $L^p_0(\T^n)$, then $0$ belongs to the resolvent of $- A_p $. Therefore, by Theorem~2.6.13 in \citep{paz:83}, one has the assertions \eqref{reg1} and \eqref{reg2}.
 
\medskip

To prove item~5, we use \eqref{reg1}. Indeed, there exists $\kappa>0$ and $C_\s>0$ such that $\forall \p_0\in L^p_0(\T^n) $
$$\|\mathrm{e}^{-tA_p }\p_0\|_{\s,p}=\| A_p ^{\s/2}\mathrm{e}^{-tA_p }\p_0\|_{p}\leq C_\s t^{-\frac{\sigma}{2}}e^{-\kappa t}\| \p_0\|_{p}.$$
For the last assertion, $\forall \gamma\in [\s/2,1]$ 

$\begin{aligned}
&\left\|\e^{-(t-s) A_p }-\e^{-(r-s) A_p }\right\|_{\mathcal{L}(L^p_0,W^{\sigma,p}_0)}\\
& \qquad\qquad\qquad =\left\| A_p ^{\s/2}(\e^{-(t-r) A_p }-I)\e^{-(r-s) A_p }\right\|_{\mathcal{L}(L^p_0,L^p_0)}\\
&\qquad\qquad\qquad=\left\| A_p ^{\s/2-\gamma}(\e^{-(t-r) A_p }-I) A_p ^{\gamma}\e^{-(r-s) A_p }\right\|_{\mathcal{L}(L^p_0,L^p_0)}\\
&\qquad\qquad\qquad\leq \left\|(\e^{-(t-r) A_p }-I) A_p ^{\s/2-\gamma}\right\|_{\mathcal{L}(L^p_0,L^p_0)}\left\| A_p ^{\gamma}\e^{-(r-s) A_p }\right\|_{\mathcal{L}(L^p_0,L^p_0)}\\
&\qquad\qquad\qquad\leq C_{\s,\gamma}(t-r)^{\gamma-\s/2}(r-s)^{-\gamma}\e^{-\kappa (r-s)}
\end{aligned}$\\

where we used \eqref{reg1} and the fact that, $\forall\p_0\in L^p_0(\T^n) $\\

$\begin{aligned}
\left\|(\e^{-(t-r) A_p }-I) A_p ^{\s/2-\gamma}\p_0\right\|_p&\leq C_{\s,\gamma}(t-r)^{\gamma-\s/2}\left\| A_p ^{\gamma-\s/2} A_p ^{-(\gamma-\s/2)}\p_0\right\|_p\\
&\leq C_{\s,\gamma}(t-r)^{\gamma-\s/2}\|\p_0\|_p
\end{aligned}$\\

where the first inequality is obtained by \eqref{reg2}.

\subsection{Proof of Lemma~\ref{spa2}}\label{prspa2}

Part $(1)$ follows directly from the continuity of $t\mapsto\e^{-t A_p }\p_0   $. To prove \eqref{int2}, let $\p_0  \in L^p_0(\T^n)$ and $h>0$. Then

$\begin{aligned}
\displaystyle\frac{\e^{-h A_p }-I}{h}\int_0^t\e^{-s A_p }\p_0  ds&=\frac{1}{h}\int_0^t\lp \e^{-(s+h) A_p }\p_0  -\e^{-s A_p }\p_0  \rp ds\\
&=\frac{1}{h}\int_h^{t+h}\e^{-s A_p }\p_0  ds-\frac{1}{h}\int_0^t\e^{-s A_p }\p_0  ds\\
&=\frac{1}{h}\int_t^{t+h}\e^{-s A_p }\p_0  ds-\frac{1}{h}\int_0^h\e^{-s A_p }\p_0  ds
\end{aligned}$

and as $h\rightarrow 0$, by \eqref{int1}, the right-hand side tends to $\e^{-t A_p }\p_0  -\p_0  $, which proves \eqref{int2}. For the last assertion, using Theorem~1.2.4(c) in \citep{paz:83} 
$$\displaystyle \frac{d}{dt}\e^{-t A_p }\p_0  =- A_p \e^{-t A_p }\p_0  $$
then \eqref{int3} follows by integrating the last equation from $s$ to $t$.
\subsection{Proof of Proposition~\ref{loc0}}\label{prloc0}
The proof of this result is inspired from~\citep{am:84}, Proposition~2.1. Since $\p _0\in\mathcal{D}^{\sigma,p}(\mathbb{T}^n)$ and $F$ is locally Lipschitz continuous from $\D^{\s,p}(\mathbb{T}^n)$ into $L^p_0(\mathbb{T}^n)$, then there exist $r>0$ and $\lambda \geq 0$ such that
\begin{equation}\label{bl}
\overline{B}_{\sigma,p}(\p _0,2r)\subset \mathcal{D}^{\sigma,p}(\mathbb{T}^n),
\end{equation}
\begin{equation}\label{f}
 \forall \p ,\varphi\in \overline{B}_{\sigma,p}(\p _0,2r),\quad\|F (\p )-F (\varphi)\|_p\leq \lambda \|\p -\varphi\|_{\sigma,p}
\end{equation}
and
\begin{equation}\label{mcst}
M:=\sup \left\{\|F (\p )\|_p,\p \in \overline{B}_{\sigma,p}(\p _0,2r)\right\}<\infty.
\end{equation}
Choose $\delta>0$ such that 
\begin{equation}\label{exp}
 \forall  t\in [0, \delta],\quad\|\mathrm{e}^{-t A_p }(\p _0-1)-(\p _0-1)\|_p\leq r
\end{equation}
and
\begin{equation}\label{alp}
\displaystyle\int_0^{\delta}\widehat{\alpha}(s)ds\leq \min\left\{\frac{1}{2\lambda},\frac{r}{M}\right\},
\end{equation}
where $\widehat{\alpha}$ is defined in Lemma~\ref{spa}. Let
$$Z:=\left\{\!\p \text{ s.t }\p-1 \!\in C([0,\delta],W^{\sigma,p}_0(\mathbb{T}^n));\sup_{0\leq t\leq \delta }\!\|(\p (t)-1)-\mathrm{e}^{-t A_p }(\p _0-1)\|_{\sigma,p}\leq r \!\right\}.$$
$Z$ is endowed with the norm $\|\cdot\|_Z:=\|\cdot\|_{L^{\infty}((0,\delta),W^{\s,p})}$ and it is a complete subset of the Banach space $C([0,\delta],W^{\sigma,p}(\mathbb{T}^n))$. By \eqref{exp}, one has
\begin{equation}\label{z}
\forall \p\in Z,\,\forall t\in [0,\delta],\,\p (t)\in \overline{B}_{\sigma,p}(\p _0,2r).
\end{equation}
Indeed, let $ \p \in Z$, by \eqref{exp}, one has\\

$\begin{aligned}
\|\p (t)-\p _0\|_{\sigma,p}&=\|(\p (t)-1)-(\p _0-1)\|_{\sigma,p}\\
&\leq \|(\p (t)-1)-\mathrm{e}^{-t A_p }(\p _0-1)\|_{\sigma,p}+\|\mathrm{e}^{-t A_p }(\p _0-1)-(\p _0-1)\|_{\sigma,p}\\
&\leq r+r.
\end{aligned}$\\

Since $\forall \p \in Z$, $\overline{B}_{\sigma,p}(\p _0,2r)\subset \D^{\s,p}(\T^n)$, then
\begin{equation}\label{Fpsi}
F (\p (\cdot))\in C([0,\delta],L^p_0)\subset L^{\infty}((0,\delta),L^p_0(\mathbb{T}^n)).
\end{equation}
In addition, by \eqref{mcst}, $\forall \p-1\in Z$ and $\forall t\in [0,\delta]$, one obtains
\begin{equation}\label{Fpsit}
\|F (\p (\cdot))\|_{L^{\infty}((0,\delta),L^p_0)}\leq M.
\end{equation}
Since $- A_p $ generates a strongly continuous analytic semigroup on $L^p_0(\T^n)$ (see Lemma~\ref{spa}), then 
\begin{equation}\label{semigrcon}
t\mapsto \mathrm{e}^{-t A_p }(\p _0-1) \in C([0,\delta],W^{\sigma,p}_0(\mathbb{T}^n)).
\end{equation}
Define now the application $g:\p\in Z\mapsto g(\p)$, where
$$\forall t\in [0,\delta],\,g(\p)(t):=1+\mathrm{e}^{-t A_p }(\p _0-1)+\displaystyle \int_{0}^{t}\mathrm{e}^{-(t-s) A_p }F (\p  (s))ds.$$
We have that $g(Z)\subset Z$. Indeed, using \eqref{or1}, \eqref{Fpsit} and \eqref{alp},  $\forall \p \in Z$, $\forall t\in [0,\delta]$\\

$\begin{aligned}
\|g(\p)(t)-1-\mathrm{e}^{-t A_p }(\p _0-1)\|_{\sigma,p}&=\left\|\displaystyle \int_{0}^{t}\mathrm{e}^{-(t-s) A_p }F (\p  (s))ds\right\|_{\sigma,p}\\
&\leq\displaystyle \int_{0}^{t}\left\|\mathrm{e}^{-(t-s) A_p }F (\p  (s))ds\right\|_{\sigma,p}\\
&\leq M\int_{0}^{t}\widehat{\alpha}(t-s)ds\\
&\leq M \frac{r}{M}.
\end{aligned}$\\

Moreover, by \eqref{f}, \eqref{alp} and \eqref{z}, then $\forall t\in [0,\delta]$, $\forall\, \p_1,\p_2\in Z $
\begin{align*}
\|g(\p _1)-g(\p _2)\|_Z&=\left\|\displaystyle \int_{0}^{t}\mathrm{e}^{-(t-s) A_p }\lp F (\p _1(s))-F (\p _2(s))\right )ds\right\|_Z\\
&\leq \|F (\p _1(\cdot))-F (\p _2(\cdot))\|_{L^{\infty}((0,\delta),L^p_0)}\displaystyle \int_{0}^{t}\widehat{\alpha}(t-s)ds\\
&\leq \lambda \|\p _1-\p _2\|_{\s,p}\frac{1}{2\lambda}\\
&=\frac{1}{2} \|\p _1-\p _2\|_Z.
\end{align*}
In conclusion, $g:Z\rightarrow Z$ is a contraction and the assertion follows by the Banach fixed point theorem.
\subsection{Proof of Theorem~\ref{mldexs}}\label{prmldexs}

Proposition \ref{loc0} implies the existence of a unique mild solution $\p $ on some compact interval $[0,t_1]\subset [0,\infty)$, $t_1>0$, with initial boundary condition $\p _0$ at time $t=0$. If $t_1<\infty$, we can apply Proposition~\ref{loc0} to find a unique mild solution $v$ on $[t_1,t_2]$, for some $t_2>t_1$, with initial boundary condition $\p _1:=\p (t_1)=\mathrm{e}^{\beta^{-1}t_1 \Delta }\p _0+\displaystyle \int_{0}^{t_1}\mathrm{e}^{\beta^{-1}(t_1-s) \Delta }F (\p  (s))ds$, which belongs to $\D^{\s,p}(\T^n)$. Let $w\in C([0,t_2],\mathcal{D}^{\sigma,p}(\mathbb{T}^n))$ be defined as:
$$w=\left\{
\begin{array}{lcl}
\p  & & \mbox{in }[0,t_1],\\
v & & \mbox{in }[t_1,t_2].\\
\end{array}
\right.$$
Then, $w$ is a mild solution on $[0,t_2]$. Indeed, $\forall  t\in [0,t_1]$
$$\p(t)= \mathrm{e}^{\beta^{-1}t \Delta}\p _0+\displaystyle \int_{0}^{t}\mathrm{e}^{(t-s) \beta^{-1}\Delta}F (\p  (s))ds.$$
Now, $\forall  t\in [t_1,t_2]$
\begin{align}
&\mathrm{e}^{\beta^{-1}t \Delta}\p _0+\displaystyle \int_{0}^{t}\mathrm{e}^{(t-s) \beta^{-1}\Delta}F (\p  (s))ds\notag\\
&\quad\quad\quad =\mathrm{e}^{\beta^{-1}t \Delta}\p _0+\displaystyle \int_{0}^{t_1}\mathrm{e}^{\beta^{-1}(t-s)\Delta}F (\p  (s))ds+\displaystyle \int_{t_1}^{t}\mathrm{e}^{\beta^{-1}(t-s) \Delta}F (\p  (s))ds\notag\\
&\quad\quad\quad=\mathrm{e}^{\beta^{-1}(t-t_1) \Delta }\p_1+\displaystyle \int_{t_1}^{t}\mathrm{e}^{\beta^{-1}(t-s) \Delta}F (\p  (s))ds\notag\\
&\quad\quad\quad=v(t),\notag 
\end{align}

since
\begin{align}
&\mathrm{e}^{\beta^{-1}t \Delta}\p _0+\displaystyle \int_{0}^{t_1}\mathrm{e}^{\beta^{-1}(t-s) \Delta }F (\p  (s))ds\notag\\
&\quad\quad\quad=\mathrm{e}^{\beta^{-1}(t-t_1) \Delta }\mathrm{e}^{\beta^{-1}t_1 \Delta }\p _0+\displaystyle \int_{0}^{t_1}\mathrm{e}^{\beta^{-1}(t-s) \Delta }F (\p  (s))ds\notag\\
&\quad\quad\quad=\mathrm{e}^{\beta^{-1}(t-t_1) \Delta }\left [\mathrm{e}^{\beta^{-1}t_1 \Delta }\p _0+\int_{0}^{t_1}\mathrm{e}^{\beta^{-1}(t_1-s) \Delta }F (\p  (s))ds\right ]\notag\\
&\quad\quad\quad=\mathrm{e}^{\beta^{-1}(t-t_1) \Delta }\p_1.\notag
\end{align}

 By Proposition \ref{loc0}, it is also the unique solution on $[0,t_2]$. Define now,
$$J_{\p _0}:=\displaystyle\bigcup \left\{[0,t]\subset[0,\infty) \mbox{ such that } \eqref{fpp} \mbox{ has a unique mild solution on } [0,t]\right\}.$$
$J_{\p _0}$ is an interval in $[0,\infty)$, which contains $0$ and is right open in $[0,\infty)$ since otherwise, an application of Proposition \ref{loc0} to its endpoint would give contradiction. Clearly $J_{\p _0}$ is the maximal interval of existence of a solution $\p $ of $\eqref{fp2}$, which is uniquely defined.

\subsection{Proof of Lemma~\ref{lipsh1}}\label{prlipsh1}
To prove Lemma~\ref{lipsh1}, we will use the following lemma.
\begin{lemma}
Let $r,t>0$ such that $r\leq \frac{t}{2}$, then 
\begin{equation}\label{rhor}
\forall \rho\in (0,1),\quad t^{\rho}+r^{\rho}\leq 3(t-r)^{\rho}.
\end{equation}
\end{lemma}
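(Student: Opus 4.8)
The statement is elementary, and the plan is to reduce it to the monotonicity of the power function $x\mapsto x^{\rho}$ on $[0,\infty)$. First I would extract from the hypothesis $r\le t/2$ its two useful consequences: on the one hand $t-r\ge t-\tfrac t2=\tfrac t2$, hence $t\le 2(t-r)$; on the other hand $r\le \tfrac t2\le t-r$.

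Next, since $\rho\in(0,1)$ the map $x\mapsto x^{\rho}$ is (strictly) increasing on $[0,\infty)$, so applying it to the two inequalities above gives $t^{\rho}\le \bigl(2(t-r)\bigr)^{\rho}=2^{\rho}(t-r)^{\rho}$ and $r^{\rho}\le (t-r)^{\rho}$. Using $2^{\rho}\le 2$, which holds precisely because $\rho\le 1$, the first estimate becomes $t^{\rho}\le 2(t-r)^{\rho}$. Adding the two bounds yields
\[
t^{\rho}+r^{\rho}\le 2(t-r)^{\rho}+(t-r)^{\rho}=3(t-r)^{\rho},
\]
which is \eqref{rhor}.

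There is essentially no obstacle here; the only point worth noting is that the constant $3$ cannot be replaced by anything smaller that works uniformly in $\rho\in(0,1)$, since taking $r=t/2$ and letting $\rho\uparrow 1$ makes $t^{\rho}+r^{\rho}$ and $3(t-r)^{\rho}$ both tend to $\tfrac{3t}{2}$. The step that fixes this constant is exactly the use of $2^{\rho}\le 2$.
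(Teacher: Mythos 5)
Your proof is correct and follows essentially the same route as the paper's: both arguments rest on the monotonicity of $x\mapsto x^{\rho}$ together with the observations $t-r\geq t/2\geq r$ and $2^{\rho}\leq 2$ (the paper phrases the latter as $3\geq 1+2^{\rho}$ and chains the inequalities starting from $3(t-r)^{\rho}$, whereas you split the left-hand side into two terms, but this is only a cosmetic difference). Your closing remark on the sharpness of the constant $3$ is a correct and pleasant addition not present in the paper.
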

\begin{proof}
Let $r\leq \frac{t}{2}$, then
$$3(t-r)^\rho\geq 3\lp t-\frac{t}{2}\rp ^\rho=3\lp \frac{t}{2}\rp ^\rho\geq (1+2^\rho)\lp \frac{t}{2}\rp ^\rho=\lp \frac{t}{2}\rp ^\rho+t^\rho \geq r^{\rho}+t^{\rho}. $$
%Suppose, under the same hypothesis, that $\exists \rho\in (0,1)$, $t^{\rho}+r^{\rho}> 3(t-r)^{\rho}$, then
%$$(t^{\rho}+\frac{t^{\rho}}{2^{\rho}})>3\frac{t^{\rho}}{2^{\rho}},$$
%which is equivalent to
%$$1+\frac{1}{2^\rho}>\frac{3}{2^\rho,}$$
%then $1+2^\rho>3$, which is not true, since $ \rho\in (0,1)$. The supposition is false, therefore one gets \eqref{rhor}.
\end{proof}

The proof of the following Lemma is inspired from \citep{am:84}, Proposition~1.4. Since $\p\in C([0,T_{\text{max}}),\D^{\s,p}(\T^n))$ it follows from Lemma~\ref{lips} that $ F  (\p(\cdot))$ $\in C([0,T_{\text{max}}),L^p_0(\T^n))$. Let $0\leq \rho<r\leq t<T <T_{\text{max}}$, then $\displaystyle M:=\sup_{t\in[0,T]}\| F  (\p(t))\|_p<\infty$. Let us now consider two cases:
\begin{itemize}
\item[$(i)$] \label{un} If $0\leq r\leq t/2$, then using \eqref{or1} and \eqref{rhor}\\

$\begin{aligned}
&\displaystyle \left\|\int_0^t\e^{-(t-s) A_p } F  (\p(s))ds-\int_0^r\e^{-(r-s) A_p } F  (\p(s))ds\right\|_{\s,p}\\
&\leq \lc \int_0^t\widehat{\alpha}(t-s)ds +\int_0^r\widehat{\alpha}(r-s)ds\rc \|F (\p (\cdot))\|_{L^{\infty}((0,T),L^p_0)}\\
&\leq C_{\s}M\lc \int_0^t(t-s)^{-\s/2}\e^{-\kappa(t-s)}ds +\int_0^r(r-s)^{-\s/2}\e^{-\kappa(r-s)}ds \rc \\
&\leq C_{\s}M\lp \frac{1}{1-\s/2}t^{1-\s/2}+\frac{1}{1-\s/2}r^{1-\s/2}\rp \\
&\leq\frac{3C_{\s}M}{1-\s/2}(t-r)^{1-\s/2}.
\end{aligned}$
\item [$(ii)$]If $0<t/2\leq r\leq t<T$, thus using $(i)$ and \eqref{dex}, $\forall \gamma\in (\s/2,1)$, one has\\

$\begin{aligned}
&\displaystyle \left\|\int_0^t\e^{-(t-s) A_p } F  (\p(s))ds-\int_0^r\e^{-(r-s) A_p } F  (\p(s))ds\right\|_{\s,p}\\
&\leq \displaystyle \left\|\int_{2r-t}^t\e^{-(t-s) A_p } F  (\p(s))ds-\int_{2r-t}^r\e^{-(r-s) A_p } F  (\p(s))ds\right\|_{\s,p}\\
&\quad +\displaystyle \left\|\int_0^{2r-t}(\e^{-(t-s) A_p }-\e^{-(r-s) A_p }) F  (\p(s))ds\right\|_{\s,p}\\
 &\leq \displaystyle \left\|\int_0^{2(t-r)}\e^{-(2(t-r)-s') A_p } F  (\p(s'))ds'-\int_0^{2r-t}\e^{-((t-r)-s') A_p } F  (\p(s'))ds'\right\|_{\s,p}\\
 &\quad +\displaystyle \left\|\int_0^{2r-t}(\e^{-(t-s') A_p }-\e^{-(r-s') A_p }) F  (\p(s'))ds'\right\|_{\s,p}\\
&\leq \left (\frac{3C_\s M}{1-\s/2} +\frac{C_{\s,\gamma}NMT^{1-\gamma}}{1-\gamma} \rp (t-r)^{\gamma-\s/2}
\end{aligned}$,\\

where we used in the fourth line the following change of variable: $s'=s-2r+t$. Since the result is valid for all $\gamma\in (\s/2,1)$, then
$$\p \in C_{\text{loc}}^{\lp1-\frac{\s}{2}\rp^-}([0,T_{\text{max}}),\D^{\s,p}(\T^n)).$$
\end{itemize}

\subsection{Proof of Lemma~\ref{v1lp}}\label{prv1lp}
First of all, let's prove that $v_1(t)\in D( A_p )$. We define,
\begin{equation}\label{v1ep}
v_{1,\varepsilon}(t)=\left\{
\begin{array}{llll}
\displaystyle\int_0^{t-\varepsilon}\e^{-(t-s) A_p }( F  (\p(s))- F  (\p(t)))ds & \mbox{ for }t\geq \varepsilon,\\
0 & \mbox{ for }t< \varepsilon .
\end{array}
\right.
\end{equation}
From this definition, it is clear that $v_{1,\varepsilon}(t)\rightarrow v_1(t)$ as $\varepsilon \rightarrow 0$ in $L^p$. It is also clear that $v_{1,\varepsilon}(t)\in D( A_p )$ and for $t\geq \varepsilon$ 
\begin{equation}
 A_p v_{1,\varepsilon}(t)=\displaystyle\int_0^{t-\varepsilon} A_p \e^{-(t-s) A_p }( F  (\p(s))- F  (\p(t)))ds.
\end{equation}
Since, by Lemma~\ref{holfp}, $ F  (\p (\cdot))\in C^{\nu^-}_{\text{loc}}([0,T_{\text{max}}),L^p(\T^n))$, it follows that, for $t>0$, $ A_p v_{1,\varepsilon}(t)$ converges as $\varepsilon\rightarrow 0$ and
$$\displaystyle\lim_{\varepsilon\rightarrow 0} A_p v_{1,\varepsilon}(t)=\int_0^{t} A_p \e^{-(t-s) A_p }( F  (\p(s))- F  (\p(t)))ds. $$
The closedness of $ A_p $ then implies that $v_1(t)\in D( A_p )$ for $t>0$ and
\begin{equation}
 A_p v_{1}(t)=\int_0^{t} A_p \e^{-(t-s) A_p }( F  (\p(s))- F  (\p(t)))ds. 
\end{equation}
Now we have only to prove the H\"older continuity of $ A_p v_1(t)$. Since $\e^{-t A_p }$ is a contraction semigroup on $L^p_0(\T^n)$, then $\forall t\in[0,T]$
 \begin{equation}\label{contrs}
\|\e^{-t A_p }\|_{\mathcal{L}(L^p_0,L^p_0)}\leq 1
 \end{equation}
 and
  \begin{equation}\label{contrs2}
\|\e^{-(t+h) A_p }-\e^{-t A_p }\|_{\mathcal{L}(L^p_0,L^p_0)}\leq 2.
 \end{equation}
Using Equation \eqref{reg1} with $\alpha=1$, there exists a constant $C>0$ such that \begin{equation}\label{anls}
\forall t\in (0,T),\quad \| A_p \e^{-t A_p }\|_{\mathcal{L}(L^p_0,L^p_0)}\leq C t^{-1}.
\end{equation}

  Using \eqref{int3}, \eqref{contrs}, \eqref{anls} and \eqref{reg1} (with $\alpha=2$), then for all $0<s<t\leq T< T_{\text{max}}$, we have
  
\begin{align}
  \| A_p \e^{-t A_p }- A_p \e^{-s A_p }\|_{\mathcal{L}(L^p_0,L^p_0)}&=\displaystyle\left \| A_p \int_s^t- A_p \e^{-\tau A_p }d\tau\right\|_{\mathcal{L}(L^p_0,L^p_0)}\notag\\
 &\leq \displaystyle\left \|\int_s^t A_p ^2\e^{-\tau A_p }d\tau\right\|_{\mathcal{L}(L^p_0,L^p_0)}\notag\\
 &\leq \displaystyle\int_s^t\| A_p ^2\e^{-\tau A_p }\|_{\mathcal{L}(L^p_0,L^p_0)}d\tau\notag\\
 &\leq  C \displaystyle\int_s^t\tau^{-2}d\tau\notag\\
 &= C (s^{-1}-t^{-1})\notag\\
 &=Ct^{-1}s^{-1}(t-s). \label{ap}
\end{align}

Let $t\geq 0$ and $h>0$ then

\begin{align}\label{i+ii+iii}
 A_p v_1(t+h)- A_p v_1(t)&= A_p \displaystyle\int_0^t\lp \e^{-(t+h-s) A_p }-\e^{-(t-s) A_p }\rp ( F  (\p(s))- F  (\p(t)))ds\notag\\
&\quad + A_p \displaystyle\int_0^t\e^{-(t+h-s) A_p }( F  (\p(t)- F  (\p(t+h)))ds\notag\\
&\quad + A_p \displaystyle\int_t^{t+h}\e^{-(t+h-s) A_p }( F  (\p(s)- F  (\p(t+h)))ds\notag\\
&=I_1+I_2+I_3.
\end{align}

We estimate each of the three terms separately. From the H\"older continuity of $ F  $ and from \eqref{ap}, one has $\forall 0<t\leq T<T_{\text{max}} $

\begin{align}\label{i}
\|I_1\|_p&\leq \displaystyle\int_0^t\left\| A_p \e^{-(t+h-s) A_p }- A_p \e^{-(t-s) A_p }\right\|_{\mathcal{L}(L^p,L^p)}\| F  (\p(s))- F  (\p(t))\|_pds\notag\\
&\leq Ch\displaystyle\int_0^t\frac{ds}{(t-s+h)(t-s)^{1-\nu^-}}\notag\\
&\leq Ch^{\nu^-}.
\end{align}

The last inequality is obtained by using several changes of variable as follows:

\begin{align}
\displaystyle\int_0^t\frac{ds}{(t-s+h)(t-s)^{1-\nu^-}}&=\int_0^t\frac{du}{(u+h)u^{1-\nu^-}}\\
&=\frac{1}{1-\nu^-} \displaystyle\int_0^{t^{1-\mu}}\frac{v^{\frac{2\mu-1}{1-\nu^-}}}{v^{\frac{1}{1-\nu^-}}+h}dv\\
&=\frac{1}{h}\displaystyle\int_0^{t^{1-\nu^-}}\frac{v^{\frac{2\nu^--1}{1-\nu^-}}}{(\frac{v}{h^{1-\nu^-}})^{\frac{1}{1-\nu^-}}+1}dv\\
&=\frac{1}{h}\displaystyle\int_0^{(t/h)^{1-\nu^-}}\frac{h^{2\nu^--1}w^{\frac{2\mu-1}{1-\nu^-}}h^{1-\nu^-}}{w^{\frac{1}{1-\nu^-}}+1}dw\\
&\leq h^{\nu^--1}\displaystyle\int_0^{\infty}\frac{w^{\frac{2\mu-1}{1-\mu}}}{w^{\frac{1}{1-\nu^-}}+1}dw\\
&\leq Ch^{\nu^--1}
\end{align}

where we have used respectively the following changes of variable: $u=t-s$, $v=u^{1-\mu}$ and $w=\frac{v}{h^{1-\mu}}$.

To estimate $I_2$, we use \eqref{int2}, that $ F  (\p (\cdot))\in C^{\nu^-}([0,T],L^p(\T^n))$ and \eqref{contrs2},

\begin{align}\label{ii}
\|I_2\|_p&= \left\| A_p \displaystyle\int_h^{t+h}\e^{-s A_p }( F  (\p(t))- F  (\p(t+h))ds \right\|_p\notag\\
&=\left\|\lp \e^{-(t+h) A_p }-\e^{-h A_p }\rp ( F  (\p(t))- F  (\p(t+h))\right\|_p\notag\\
&\leq \left\|\e^{-(t+h) A_p }-\e^{-h A_p }\right\|_{\mathcal{L}(L^p,L^p)}\| F  (\p(t))- F  (\p(t+h)\|_p\notag\\
&\leq 2Ch^{\nu^-}.
\end{align}

Finally, to estimate $I_3$, we use \eqref{anls} and that $ F  (\p (\cdot))\in C^{\nu^-}([0,T],L^p_0(\T^n))$,

\begin{align}\label{iii}
\|I_3\|_p&\leq \displaystyle\int_t^{t+h}\left\| A_p \e^{(t+h-s) A_p }\right\|_{\mathcal{L}(L^p_0,L^p_0)}\| F  (\p(s))- F  (\p(t+h)\|_p\notag\\
&\leq C\displaystyle\int_t^{t+h}(t+h-s)^{\nu^--1}ds\notag\\
&\leq Ch^{\nu^-}.
\end{align}

Combining \eqref{i+ii+iii} with estimates \eqref{i}, \eqref{ii} and \eqref{iii}, one obtains that $ A_p v_1(t)$ is H\"older continuous from $[0,T]$ to $L^p(\T^n)$.

\subsection{Proof of Theorem~\ref{milp}}\label{prmilp}

Let $\delta>0$ and $T\in (0,T_{\text{max}})$, we have that
$$(\p(t)-1)=\e^{-t A_p }(\p_0-1)+\displaystyle\int_0^t\e^{-(t-s) A_p } F  (\p(s))ds=\e^{-t A_p }(\p_0-1)+v(t)$$
and
$$\frac{d}{dt}(\p -1)=- A_p (\p -1)+  F  (\p).$$
Since by \eqref{ap} $ A_p \e^{-t A_p }(\p_0-1)$ is Lipschitz continuous from $[\delta,T_{\text{max}})$ into $L^p_0(\T^n)$, then it suffices to show that $ A_p v(t)\in C^{\nu^-}_{\text{loc}}((0,T_{\text{max}}),L^p(\T^n))$. To this end, one can decompose $v$ into
$$v(t)=v_1(t)+v_2(t)=\displaystyle\int_0^t\e^{-(t-s) A_p }( F  (\p(s))- F  (\p(t)))ds+\int_0^t\e^{-(t-s) A_p }  F  (\p(t))ds.$$
From Lemma~\ref{v1lp}, it follows that $ A_p v_1(t)\in C^{\nu^-}_{\text{loc}}((0,T_{\text{max}}),L^p(\T^n))$, so it remains only to show that $ A_p v_2(t)\in C^{\nu^-}_{\text{loc}}((0,T_{\text{max}}),L^p(\T^n))$. By \eqref{int2}, we have $ A_p v_2(t)=-(\e^{-t A_p }-I) F  (\p(t))$, and since $ F  (\p(t))\in C^{\nu^-}_{\text{loc}}([0,T_{\text{max}}),L^p(\T^n))$, it only remains to prove that $\e^{-t A_p } F  (\p(t))\in C^{\nu^-}_{\text{loc}}((0,T_{\text{max}}),L^p(\T^n))$.

Using \eqref{dex} (with $\s=0$, $\gamma=1$, $t=t+h$, $r=t$ and $s=0$), then $\forall \varphi_0\in L^p_0(\T^n)$, $\forall\delta>0$,  $\forall t\geq \delta$ one obtains:
\begin{equation}\label{prin}
\left\|\e^{-(t+h) A_p }\varphi_0-\e^{-t A_p }\varphi_0\right\|_p\leq C\|\varphi_0\|_ph\delta^{-1}.
\end{equation}
 Let $ \delta\leq t\leq T<T_{\text{max}}$ and $h>0$, then using the fact that $ F  (\p)\in C^{\nu^-}([0,T],L^p_0(\T^n))$ and \eqref{prin} 
$$\begin{aligned}
&\left\|\e^{-(t+h) A_p } F  (\p(t+h))-\e^{-t A_p } F  (\p(t))\right\|_p\notag\\
&\leq \left\|\e^{-(t+h) A_p }\right\|_{\mathcal{L}(L^p_0,L^p_0)}\| F  (\p(t+h))- F  (\p(t))\|_p\\
&\quad +\left\|\e^{-(t+h) A_p }-\e^{-t A_p }\right\|_{\mathcal{L}(L^p_0,L^p_0)}\| F  (\p(t))\|_p\notag\\
&\leq Ch^{\nu^-}+C\delta^{-1}h\|F (\p (\cdot))\|_{L^{\infty}((0,T),L^p)}\notag\\
&\leq Ch^{\nu^-}.
\end{aligned}$$
Thus $A_p(\p-1)\in C^{\nu^-}_{\text{loc}}((0,T_{\text{max}}),L^p(\T^n))$. This completes the proof of the part $(i)$ since $\frac{d}{dt}(\p -1)=- A_p (\p -1)+  F  (\p)\in C^{\nu^-}_{\text{loc}}((0,T_{\text{max}}),L^p(\T^n))$. 

\medskip

To prove $(ii)$, we first note that if $\p_0\in W^{2,p}(\T^n)$, then $ A_p \e^{-t A_p }(\p_0-1)=\e^{-t A_p } A_p (\p_0-1)\in C([0,T_{\text{max}}),L^p_0(\T^n))$. By Lemma~\ref{v1lp}, $ A_p v_1(t)\in C^{\nu^-}_{\text{loc}}([0,T_{\text{max}}),L^p(\T^n))$. We also have $ A_p v_2(t)=-(\e^{-t A_p }-I) F  (\p(t))$. Since $ F  (\p)$ is continuous on $[0,T_{\text{max}})$ with values in $L^p_0(\T^n)$, it only remains to show that $\e^{-t A_p } F  (\p(t))$ is continuous on $[0,T_{\text{max}})$ with values in $L^p_0(\T^n)$. From $(i)$, it is clear that $\e^{-t A_p } F  (\p(t))$ is continuous on $(0,T_{\text{max}})$. Since $F  (\p_0)\in L^p_0(\T^n)$, the continuity at $t=0$ follows directly from,
$$\left\|\e^{-t A_p } F  (\p(t))- F  (\p_0)\right\|_p\leq\left\|\e^{-t A_p } F  (\p_0)- F  (\p_0)\right\|_p +\| F  (\p(t))- F  (\p_0)\|_p$$
and this completes the proof of $(ii)$.

\subsection{Proof of Lemma~\ref{gron}}\label{prgron}

This prove is borrowed from  Lemma~3.3 in \citep{am:84}. Choose $\varepsilon>0$ such that $\displaystyle\int_0^{\varepsilon}\gamma(s)ds\leq \frac{1}{2}$. Thus, for $0\leq r\leq t<T_{\text{max}}$\\

$\begin{aligned}
u(r)&\leq \zeta(r)+\displaystyle\int_0^{r-\varepsilon}\gamma (r-s)u(s)ds+\displaystyle\int_{r-\varepsilon}^{r}\gamma (r-s)u(s)ds\\
&\leq \zeta^*(r)+\gamma(\varepsilon)\displaystyle\int_0^{r-\varepsilon}u(s)ds+u^*(t)\displaystyle\int_0^{\varepsilon}\gamma(s)ds,\\
&\leq \zeta^*(t)+\gamma(\varepsilon)\displaystyle\int_0^{t}u^*(s)ds+\frac{1}{2} u^*(t),
\end{aligned}$\\

where $u^*(t):={\rm max}\{u(s)\,|\, 0\leq s\leq t\}$. Therefore,
$$u^*(t)\leq 2\zeta^*(t)+\delta\displaystyle\int_0^{t}u^*(s)ds,\quad 0\leq t<T_{\text{max}},$$
where $\delta:=2\gamma(\varepsilon)$. Standard Gronwall's lemma then yields the assertion.

\section*{Acknowledgments}
 Houssam Alrachid would like to thank the Ecole des Ponts ParisTech and CNRS Libanais for supporting his PhD thesis. The work of Tony Lelièvre is supported by the European Research Council under the European Union's Seventh Framework Programme (FP/2007-2013) / ERC Grant Agreement number 614492. Raafat Talhouk is partially supported by a research contract of the Lebanese university.

\section*{References}

\end{document}